\newlength{\myarrowsize} 
\newenvironment{diagram}[2]{%
\begin{equation}%
\begin{tikzpicture}[>=cmto,baseline=(current bounding box.center),%
	to/.style={-cmto,font=\scriptsize,cap=round},%
	into/.style={cmhook->,font=\scriptsize,cap=round},%
	onto/.style={-cmonto,font=\scriptsize,cap=round},%
	math/.style={matrix of math nodes, row sep=#2, column sep=#1,%
		text height=1.5ex, text depth=0.25ex}]%
}{%
\end{tikzpicture}%
\end{equation}%
\ignorespacesafterend%
}
\newenvironment{diagram*}[2]{%
\[%
\begin{tikzpicture}[>=cmto,baseline=(current bounding box.center),%
	to/.style={->,font=\scriptsize,cap=round},%
	into/.style={cmhook->,font=\scriptsize,cap=round},%
	onto/.style={-cmonto,font=\scriptsize,cap=round},%
	math/.style={matrix of math nodes, row sep=#2, column sep=#1,%
		text height=1.5ex, text depth=0.25ex}]%
}{%
\end{tikzpicture}%
\]%
\ignorespacesafterend%
}
\newcommand{\MHM}{\operatorname{MHM}}
\newcommand{\Dmod}{\mathscr{D}}
\newcommand{\Mmod}{\mathcal{M}}
\newcommand{\Nmod}{\mathcal{N}}
\newcommand{\derR}{\mathbf{R}}
\newcommand{\derL}{\mathbf{L}}
\newcommand{\decal}[1]{\lbrack #1 \rbrack}
\newcommand{\ltriangle}[4][]%
{\begin{diagram}[#1]%
	{#2} &\rTo& {#3} &\rTo& {#4} &\rTo& {#2 \decal{1}}%
\end{diagram}}
\newcommand{\shH}{\mathcal{H}}
\newcommand{\eps}{\varepsilon}
\newcommand{\tensor}{\otimes}
\newcommand{\NN}{\mathbb{N}}
\newcommand{\ZZ}{\mathbb{Z}}
\newcommand{\QQ}{\mathbb{Q}}
\newcommand{\CC}{\mathbb{C}}
\DeclareMathOperator{\DR}{DR}
\newcommand{\shf}[1]{\mathscr{#1}}
\newcommand{\OX}{\shf{O}_X}
\def\overbar#1#2#3{{%
	\setbox0=\hbox{\displaystyle{#1}}%
	\dimen0=\wd0
	\advance\dimen0 by -#2 
	\vbox {\nointerlineskip \moveright #3 \vbox{\hrule height 0.3pt width \dimen0}%
		\nointerlineskip \vskip 1.5pt \box0}%
}}
\newcommand{\shF}{\shf{F}}
\newcommand{\shO}{\shf{O}}
\let\@@seccntformat\@seccntformat
\renewcommand*{\@seccntformat}[1]{%
  \expandafter\ifx\csname @seccntformat@#1\endcsname\relax
    \expandafter\@@seccntformat
  \else
    \expandafter
      \csname @seccntformat@#1\expandafter\endcsname
  \fi
    {#1}%
}
\newcommand*{\@seccntformat@subsection}[1]{%
  \textbf{\csname the#1\endcsname.}
}
\let\@paragraph\paragraph
\renewcommand*{\paragraph}[1]{%
	\vspace{0.3\baselineskip}%
	\@paragraph{\textit{#1}}%
}
\newtheorem{theorem}[equation]{Theorem}
\newtheorem*{theorem*}{Theorem}
\newtheorem{lemma}[equation]{Lemma}
\newtheorem*{lemma*}{Lemma}
\newtheorem{corollary}[equation]{Corollary}
\newtheorem{proposition}[equation]{Proposition}
\newtheorem*{proposition*}{Proposition}
\theoremstyle{definition}
\newtheorem{definition}[equation]{Definition}
\newtheorem*{definition*}{Definition}
\theoremstyle{remark}
\newtheorem{remark}[equation]{Remark}
\newtheorem{question}[equation]{Question}
\newtheorem{example}[equation]{Example}
\newtheorem*{example*}{Example}
\theoremstyle{plain}
\newcommand{\theoremref}[1]{\hyperref[#1]{Theorem~\ref*{#1}}}
\newcommand{\lemmaref}[1]{\hyperref[#1]{Lemma~\ref*{#1}}}
\newcommand{\propositionref}[1]{\hyperref[#1]{Proposition~\ref*{#1}}}
\newcommand{\conjectureref}[1]{\hyperref[#1]{Conjecture~\ref*{#1}}}
\newcommand{\corollaryref}[1]{\hyperref[#1]{Corollary~\ref*{#1}}}
\let\old@caption\caption
\renewcommand*{\caption}[1]{%
	\setcounter{figure}{\value{equation}}%
	\stepcounter{equation}%
	\old@caption{#1}\relax%
}
\newcounter{thmA}
\newtheorem{theorem-intro}[thmA]{Theorem}
\newcounter{intro}
\newtheorem{intro-conjecture}[intro]{Conjecture}
\newtheorem{intro-corollary}[intro]{Corollary}
\newtheorem{intro-theorem}[intro]{Theorem}
\newcommand{\OA}{\mathscr{O}_A}
\newcommand{\OmA}[1]{\Omega_A^{#1}}
\newcommand{\PP}{\mathbf P}
\newcommand{\V}{\mathbf V}
\newcommand{\cV}{\mathcal{V}}
\newcommand{\R}{\mathbf R}
\newcommand{\D}{\mathbf D}
\newcommand{\I}{\mathcal{I}}
\def\lra{\longrightarrow}
\DeclareMathOperator{\BB}{{\bf B}}
\DeclareMathOperator{\B+}{\BB_{+}}
\DeclareMathOperator{\Gr}{Gr}
\newcommand{\newpar}[1]{\subsection{\texorpdfstring{}{}}}
\newcommand{\parref}[1]{\hyperref[#1]{\S\ref*{#1}}}
\begin{document}

\title{Kodaira-Saito vanishing and applications}

\author[M.~Popa]{Mihnea Popa}
\address{Department of Mathematics, Northwestern University,
2033 Sheridan Road, Evanston, IL 60208, USA} 
\email{\tt mpopa@math.northwestern.edu}


\subjclass[2010]{14F17; 14F10, 14D07}

\thanks{During the preparation of this paper I was supported by the NSF grant DMS-1405516.}

\begin{abstract}
The first part of the paper contains a detailed proof of M. Saito's generalization of the Kodaira vanishing theorem,
following the original argument and with ample background. The second part contains some recent applications, 
and a Kawamata-Viehweg-type statement in the setting of mixed Hodge modules.

\end{abstract}

\maketitle


\subsection{Introduction}\label{intro}
This article was originally the outcome of a lecture delivered at the Clay workshop on mixed Hodge modules, held at Oxford University in August 2013. The main goal was to explain in detail the proof of Morihiko Saito's extension of the Kodaira-Nakano vanishing theorem to mixed Hodge modules, discuss various special cases, and give a guide to recent applications. This is done in the first and main part of the paper, Sections \ref{classical}--\ref{particular_cases}, which also includes ample background. Since then I have also included some new applications. One is a proof of weak positivity for the lowest graded piece of a Hodge module obtained jointly with C. Schnell (which also appears in \cite{Schnell2}). Another is a Hodge module version of the Kawamata-Viehweg vanishing theorem, likely not in its final form.\footnote{Added during revision: in the meanwhile,
in the case of Cartier divisors a stronger Kawamata-Viehweg-type vanishing theorem was indeed proved by  Suh \cite{Suh} and Wu \cite{Wu}.}

M. Saito's vanishing theorem is stated and proved as Theorem \ref{saito_vanishing} below. It was obtained in \cite{Saito-MHP}*{\S2.g}; the proof provided here is a detailed account of Saito's original argument, which in turn is a generalization of Ramanujam's topological approach to vanishing.  C. Schnell \cite{Schnell3} has recently found a different proof of the theorem, this time extending the Esnault-Viehweg approach to vanishing via the degeneration of the Hodge-to-de Rham spectral sequence on cyclic covers.

In order to make the underlying approach of Saito clear, I will first recall the proof of the Kodaira-Nakano vanishing theorem based on the weak Lefschetz theorem, the Hodge decomposition, and cyclic covering constructions. In the proof of Theorem \ref{saito_vanishing}, the corresponding roles will be played by the Artin-Grothendieck vanishing theorem for constructible sheaves and by M. Saito's generalization of the standard results of Hodge theory to the setting of mixed Hodge modules. There are however significant new difficulties that are resolved with the use of the interaction between the Hodge filtration and the Kashiwara-Malgrange $V$-filtration established in \cite{Saito-MHP}, recalled in the preliminaries; the background discussion will survey this and other facts about filtered $\Dmod$-modules in Hodge theory, with references for all the statements needed in the paper. 

Many of the standard vanishing theorems involving ample line bundles are special cases of Saito vanishing. This will be reviewed in Section \ref{particular_cases}, where I will also mention its use to generic vanishing theory. When passing to big and nef line $\QQ$-divisors however, the situation is more complicated. In Section \ref{kawamata_viehweg} I prove a first version of Kawamata-Viehweg for mixed Hodge modules -- 
roughly speaking, it assumes that the Hodge module is a variation of mixed Hodge structure over the augmented base locus of a nef and big line bundle. Another application, provided in Section \ref{weak_positivity}, is a proof together with Schnell of an extension of a weak positivity theorem of Viehweg to the lowest graded piece of the Hodge filtration on a Hodge $\Dmod$-module. Arguing along the lines of Koll\'ar's approach to weak positivity provides a very quick argument, once Kodaira vanishing and adjunction have been extended to setting of mixed Hodge modules.

As a good part of the paper is expository, my main goal is to make these very useful statements and techniques more accessible to algebraic geometers; the 
viewpoint is that of cohomological methods in birational geometry. The reader interested in a more general overview of the theory of mixed Hodge modules is encouraged to consult the recent \cite{Schnell-MHM}, besides of course the original \cite{Saito-MHP} and \cite{Saito-MHM}. 

\noindent
{\bf Acknowledgements.}
I am very grateful to Christian Schnell, from whom I learned a lot about Hodge modules, and who made numerous useful comments on this paper. I would also like to thank Nero Budur, Mircea Musta\c t\u a, Claude Sabbah and Morihiko Saito for answering my questions, and the organizers of the Oxford Clay workshop on mixed Hodge modules (all among the above) for putting together such a valuable event.

\subsection{The topological/Hodge theoretic approach to Kodaira vanishing}\label{classical}
In this section I will recall the approach to the Kodaira vanishing theorem based on topological and Hodge theoretic methods, which also gives the more general Nakano vanishing.
It was first observed by Ramanujam that one can use such methods, Kodaira's original proof being
 of a differential geometric nature. 
I will follow the treatment in \cite{Lazarsfeld} \S4.2; this is intended to be an introduction to the strategy 
used by Saito in order to prove the more general result for Hodge modules. 

\begin{theorem}[{\bf Kodaira-Nakano Vanishing Theorem}]\label{nakano}
Let $X$ be a smooth complex projective variety, and $L$ an ample line bundle on $X$. Then 
$$H^q (X, \Omega_X^p \otimes L) = 0 \,\,\,\, {\rm for~} p + q >n,$$
or equivalently
$$H^q (X, \Omega_X^p \otimes L^{-1}) = 0 \,\,\,\, {\rm for~} p + q  < n.$$
\end{theorem}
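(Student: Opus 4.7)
The plan is to follow Ramanujam's topological/Hodge-theoretic argument. By Serre duality the two formulations are equivalent, so I will prove the second, $H^q(X, \Omega_X^p \otimes L^{-1}) = 0$ for $p + q < n$, by induction on $n = \dim X$; the base case $n = 1$ reduces to $H^0(X, L^{-1}) = 0$ for a negative line bundle on a curve. For the inductive step, I fix $m \gg 0$ so that $L^m$ is very ample and, by Bertini, pick a smooth divisor $D \in |L^m|$. The $m$-fold cyclic cover $\pi \colon Y \to X$ branched along $D$ is then smooth and projective of dimension $n$, with Galois group $\mu_m$ and reduced ramification divisor $E = \pi^{-1}(D)_{\mathrm{red}}$.

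The topological input is that $U = X \setminus D$ is affine (the complement of an ample divisor), hence so is its étale cover $V = \pi^{-1}(U) = Y \setminus E$. By the Andreotti--Frankel theorem, $V$ has the homotopy type of a CW complex of real dimension at most $n$, so $H^k(V, \CC) = 0$ for $k > n$. Deligne's mixed Hodge theory identifies this group with $\HH^k(Y, \Omega_Y^\bullet(\log E))$, and the Hodge-to-de Rham spectral sequence degenerates at $E_1$, hence
\[
H^q(Y, \Omega_Y^p(\log E)) = 0 \quad \text{for } p + q > n.
\]

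To descend to $X$, the local identification $\pi^* \Omega_X^p(\log D) \cong \Omega_Y^p(\log E)$ (a direct coordinate computation using $y^m = x_1$) combined with the projection formula and the $\mu_m$-isotypic decomposition $\pi_* \shf{O}_Y = \bigoplus_{i=0}^{m-1} L^{-i}$ yields
\[
H^q(X, \Omega_X^p(\log D) \otimes L^{-i}) = 0 \quad \text{for } p + q > n, \ 1 \leq i \leq m - 1.
\]
Applying Serre duality on $X$ with $i = m - 1$, via the perfect pairing $\Omega_X^p(\log D) \otimes \Omega_X^{n-p}(\log D) \to \omega_X(D) = \omega_X \otimes L^m$, converts this into
\[
H^q(X, \Omega_X^p(\log D) \otimes L^{-1}) = 0 \quad \text{for } p + q < n.
\]
The logarithmic poles are removed via the residue sequence
\[
0 \to \Omega_X^p \otimes L^{-1} \to \Omega_X^p(\log D) \otimes L^{-1} \to \Omega_D^{p-1} \otimes L^{-1}|_D \to 0;
\]
in the associated long exact sequence, for $p + q < n$ the right cohomology vanishes by the display above, while the left cohomology $H^{q-1}(D, \Omega_D^{p-1} \otimes L^{-1}|_D)$ vanishes by the inductive hypothesis applied to the ample line bundle $L|_D$ on the $(n-1)$-dimensional $D$ (since $(p-1) + (q-1) < n-1$ is equivalent to $p + q < n + 1$). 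This yields $H^q(X, \Omega_X^p \otimes L^{-1}) = 0$ for $p + q < n$ and closes the induction.

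The main obstacle is the Hodge-theoretic input in the second paragraph: the existence of the mixed Hodge structure on $H^\bullet(V, \CC)$ computed by the logarithmic de Rham complex on $(Y, E)$, together with its $E_1$-degeneration. This is the heart of Deligne's theory on smooth quasi-projective varieties, and it is precisely the piece that Saito's theorem must generalize in order to replace the constant coefficient system by an arbitrary (mixed) Hodge module --- which is why the proof of Theorem~\ref{saito_vanishing} will require the interaction between the Hodge filtration and the Kashiwara--Malgrange $V$-filtration.
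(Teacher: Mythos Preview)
Your argument is correct but follows a genuinely different route from the paper's. Both proofs set up the same induction on $\dim X$, take a smooth $D\in|L^m|$, form the cyclic cover $Y\to X$ branched along $D$, and use the residue sequence together with the inductive hypothesis on $D$ to strip off the log poles. The divergence is in how you establish the key log vanishing upstairs.

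The paper works with the \emph{compact} pair $(Y,D')$ (your $E$): it uses the second short exact sequence $0\to\Omega_Y^p(\log D')(-D')\to\Omega_Y^p\to\Omega_{D'}^p\to 0$ to reduce $H^q(Y,\Omega_Y^p(\log D')(-D'))=0$ for $p+q<n$ to the statement that the restriction maps $H^q(Y,\Omega_Y^p)\to H^q(D',\Omega_{D'}^p)$ are isomorphisms (resp.\ injective) in the appropriate range, which follows from weak Lefschetz plus the \emph{pure} Hodge decomposition on $Y$ and $D'$. You instead work with the \emph{open} complement $V=Y\setminus E$: Andreotti--Frankel on the affine $V$ gives $H^k(V,\CC)=0$ for $k>n$, and Deligne's \emph{mixed} Hodge theory ($E_1$-degeneration of the log de Rham spectral sequence) converts this into $H^q(Y,\Omega_Y^p(\log E))=0$ for $p+q>n$; a further Serre duality on $X$ with the $L^{-(m-1)}$ summand brings you to the same log vanishing with $L^{-1}$. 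These two upstairs statements are Serre-dual to each other, so the endpoints coincide.

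What each buys: the paper's route needs only classical weak Lefschetz and pure Hodge theory, avoiding mixed Hodge structures entirely. Your route is the Esnault--Viehweg approach, and---as you correctly anticipate in your closing remark---it is in fact \emph{closer in spirit} to the proof of Theorem~\ref{saito_vanishing} that the paper goes on to give: Saito's Step~3 uses precisely Artin--Grothendieck vanishing on an affine complement plus strictness (the Hodge-module analogue of $E_1$-degeneration), not a weak Lefschetz argument. So while the paper presents the Ramanujam version for pedagogical reasons, your version is the better template for what comes next; the paper itself notes that Schnell \cite{Schnell3} reproved Saito's theorem along these Esnault--Viehweg lines.
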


Before proving the theorem, let's review some useful technical tools. 
First, recall the following well-known cyclic covering construction, needed in order to 
 ``take $m$-th roots" of divisors $D \in |mL|$, with $L$ some line bundle. For a proof of this and other 
 covering constructions see \cite{Lazarsfeld} \S4.1.B.

\begin{proposition}\label{cyclic}
Let $X$ be a variety over an algebraically closed field $k$, and let $L$ be a line bundle on $X$. Let 
$0 \neq s \in H^0 (X, L^{\otimes m})$ 
for some $m \ge 1$, with $D = Z (s) \in |mL|$. Then there exists a finite flat morphism $f: Y \rightarrow X$, where $Y$ is a 
scheme over $k$ such that if $L^\prime = f^* L$, there is a section 
$$s^\prime \in H^0 (Y, L^\prime) {\rm~satisfying~} {(s^\prime)}^m = f^* s.$$
Moreover:

\noindent
$\bullet$ if $X$ and $D$ are smooth, then so are $Y$ and $D^\prime = Z(s^\prime)$.

\noindent
$\bullet$ the divisor $D^\prime$ maps isomorphically onto $D$. 

\noindent
$\bullet$ there is a canonical isomorphism $f_* \shO_Y \simeq \shO_X \oplus L^{-1} \oplus \cdots\oplus L^{- (m-1)}$.
\end{proposition}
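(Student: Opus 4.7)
The plan is to realize $Y$ as the relative spectrum over $X$ of an explicit sheaf of algebras built from $s$. First I would define
$$\mathcal{A} = \bigoplus_{i=0}^{m-1} L^{-i}$$
as a quasi-coherent $\OX$-module, equipped with the multiplication sending $L^{-i}\tensor L^{-j}$ to $L^{-(i+j)}$ by the canonical pairing when $i+j<m$, and to $L^{-(i+j-m)}$ by the canonical pairing followed by multiplication by $s \in H^0(X, L^m)$ when $i+j\geq m$. Associativity and commutativity can be checked on any Zariski open $U\subset X$ over which $L$ trivializes and $s$ becomes a function $\bar s$: there $\mathcal{A}\big|_U \cong \OX|_U[T]/(T^m - \bar s)$. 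I would then set $Y = \Spec_X \mathcal{A}$ with structure morphism $f$. Since $\mathcal{A}$ is locally free of rank $m$ over $\OX$, $f$ is automatically finite and flat, and the direct sum decomposition gives the third bullet $f_\ast \shO_Y \cong \OX \oplus L^{-1} \oplus \cdots \oplus L^{-(m-1)}$ for free.

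To construct $s'$, I would use that the summand $L^{-1}\subseteq \mathcal{A}$ produces, after tensoring with $L$, the tautological element $1\in L^{-1}\tensor L \subseteq \mathcal{A}\tensor L = f_\ast(f^\ast L)$, which I take to be $s'$. In the local model above, $s'$ corresponds to the class of $T$, so $(s')^m$ corresponds to $T^m = \bar s$, confirming $(s')^m = f^\ast s$ globally.

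For the last two bullets I would work in the same local model, where $Y|_U$ is the subscheme of $U\times \AAn{1}$ cut out by $T^m - \bar s = 0$, with $f$ the projection and $D' = Z(s') = \{T = 0\}$. The map $D'\to D$ is an isomorphism because over $D = \{\bar s = 0\}$ the equation $T^m = \bar s$ forces $T = 0$ uniquely, as a closed subscheme. Smoothness of $Y$ follows from the Jacobian criterion applied to $T^m - \bar s$ inside the smooth ambient $U\times \AAn{1}$: off $D'$ the partial derivative $mT^{m-1}$ is nonzero, while at every point of $D'$ the partial in the $X$-direction equals $-d\bar s$, nonzero precisely because $D$ is smooth. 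Finally, $D'$ is smooth because it is cut out in the smooth $Y$ by the single equation $T = 0$ (equivalently, $D' \cong D$ as just shown).

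The step I expect to require the most care is the smoothness assertion, since it is the only place where one genuinely uses $\mathrm{char}(k)\nmid m$ (automatic in the characteristic zero setting of the rest of the paper): if $m$ were zero in $k$, the partial $mT^{m-1}$ would vanish identically and the Jacobian argument off $D'$ would fail. Everything else reduces in a straightforward manner to the local model $T^m = \bar s$, and the global definition of $\mathcal{A}$ via the canonical pairings avoids any nontrivial gluing.
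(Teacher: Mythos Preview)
Your argument is correct and is the standard construction of the cyclic cover via $Y=\Spec_X\bigl(\bigoplus_{i=0}^{m-1}L^{-i}\bigr)$. Note, however, that the paper does not actually prove this proposition: it simply states it as a well-known fact and refers to \cite{Lazarsfeld}*{\S4.1.B} for a proof, so there is no ``paper's own proof'' to compare against. What you have written is essentially the argument one finds in that reference, and your observation that the smoothness bullet genuinely requires $\mathrm{char}(k)\nmid m$ is a useful caveat that the statement in the paper leaves implicit (the rest of the paper works over $\CC$, so this is harmless there).
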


Furthermore, recall that if $X$ is a smooth variety, and $D$ is a smooth effective divisor on $X$, 
then the sheaf of $1$-forms on $X$ with log-poles along $D$ is
$$\Omega_X^1 ({\rm log}~D) = \Omega_X^1 < \frac{df}{f}> , \,\, f {\rm ~local ~equation~for~} D.$$
Concretely, if $z_1, \ldots, z_n$ are local coordinates on $X$, chosen such that $D = (z_n = 0)$, then 
$\Omega_X^1 ({\rm log}~D)$ is locally generated by $dz_1, \ldots, dz_{n-1}, \frac{dz_n}{z_n}$. This is a free 
system of generators, so $\Omega_X^1 ({\rm log}~D)$ is locally free of rank $n$. For any integer $p$, we define
$$\Omega_X^p ({\rm log}~D) := \bigwedge^p \bigl( \Omega_X^1 ({\rm log}~D) \bigr).$$
Using local calculations and the residue map, it is standard to verify the following statements (see \cite{EV}*{\S2}
or \cite{Lazarsfeld}*{Lemma 4.2.4}):

\begin{lemma}\label{log_sequences}
There are short exact sequences:

\noindent
(i) \,\,\,\, $0 \lra \Omega_X^p \lra \Omega_X^p ({\rm log}~ D) \lra \Omega_D^{p-1} \lra 0$.

\noindent
(ii) \,\,\,\, $0 \lra \Omega_X^p ({\rm log}~D) (- D) \lra \Omega_X^p  \lra \Omega_D^p \lra 0$.
\end{lemma}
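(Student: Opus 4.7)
The plan is to verify both sequences by direct local computation, using coordinates adapted to $D$, and then to observe that the constructions are intrinsic enough to glue.

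First I would work in a local chart with coordinates $z_1,\dots,z_n$ on $X$ such that $D = (z_n = 0)$. By the definition recalled just before the lemma, $\Omega_X^p(\log D)$ is locally free with basis indexed by the wedge products $dz_{i_1}\wedge\cdots\wedge dz_{i_p}$ using the symbols $dz_1,\dots,dz_{n-1},\tfrac{dz_n}{z_n}$. Every local section can therefore be written uniquely in the form $\alpha + \beta\wedge\tfrac{dz_n}{z_n}$, where $\alpha$ has degree $p$ and $\beta$ has degree $p-1$, both involving only $dz_1,\dots,dz_{n-1}$.

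For sequence (i), the inclusion $\Omega_X^p \hookrightarrow \Omega_X^p(\log D)$ is visibly injective, and a local section $\alpha + \beta\wedge\tfrac{dz_n}{z_n}$ lies in the image iff $z_n \mid \beta$, since in that case $\beta\wedge\tfrac{dz_n}{z_n}$ equals a holomorphic form involving $dz_n$. The residue map $\alpha + \beta\wedge\tfrac{dz_n}{z_n} \mapsto \beta|_D$ therefore identifies the quotient with $\Omega_D^{p-1}$. For sequence (ii), I would use the identification $\shO_X(-D) \simeq z_n\shO_X$ to describe $\Omega_X^p(\log D)(-D)$ locally as sections of the form $z_n\alpha + \beta\wedge dz_n$. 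This is a subsheaf of $\Omega_X^p$, the inclusion is clearly injective, and the cokernel is computed by sending a local section $\alpha + \beta\wedge dz_n \in \Omega_X^p$ to $\alpha|_D$, yielding $\Omega_D^p$.

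Finally I would check that both the residue map in (i) and the restriction map in (ii) are defined independently of the chosen coordinates, so the two sequences defined locally patch to globally defined short exact sequences of coherent sheaves; this is the standard intrinsic definition of the Poincar\'e residue. The only subtlety worth flagging is this coordinate-independence; the computation itself has no real obstacle, since once the local bases are fixed, exactness is literally immediate from unique decomposition along the $dz_n$ and $\tfrac{dz_n}{z_n}$ pieces.
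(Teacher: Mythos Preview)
Your argument is correct and is precisely the standard ``local calculations and the residue map'' approach that the paper itself invokes (without giving details, referring instead to \cite{EV}*{\S2} and \cite{Lazarsfeld}*{Lemma 4.2.4}). There is nothing to add: the unique decomposition along the $\tfrac{dz_n}{z_n}$-piece gives (i) via the Poincar\'e residue, and multiplying through by $z_n$ gives (ii) via restriction, exactly as you describe.
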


\begin{lemma}\label{log_cover}
Let $f: Y \rightarrow X$ be the $m$-fold cyclic cover branched along $D$, as in Proposition 
\ref{cyclic}. Let $D^\prime$ be the divisor in $Y$ such that $f^* D = mD^\prime$, mapping isomorphically onto $D$. Then 
$$f^*  \Omega_X^p ({\rm log}~D) \simeq  \Omega_Y^p ({\rm log}~D^\prime).$$
\end{lemma}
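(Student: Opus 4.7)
The plan is to construct a natural map $f^* \Omega_X^p(\log D) \to \Omega_Y^p(\log D')$ and then check it is an isomorphism by a local computation in suitable coordinates. First, note that both sheaves are locally free of rank $\binom{n}{p}$ on $Y$: the source because $\Omega_X^p(\log D)$ is locally free and $f$ is finite flat, and the target by the definition of log forms on the smooth pair $(Y, D')$ (the smoothness coming from Proposition \ref{cyclic}). Since the reduced preimage $f^{-1}(D)_{\mathrm{red}}$ equals $D'$, the usual pullback of differentials $f^* \Omega_X^1 \to \Omega_Y^1$ sends log forms along $D$ to log forms along $D'$, hence extends to a morphism $\varphi^1 \colon f^* \Omega_X^1(\log D) \to \Omega_Y^1(\log D')$; taking $p$-th exterior powers then gives $\varphi^p \colon f^* \Omega_X^p(\log D) \to \Omega_Y^p(\log D')$. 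It suffices to prove $\varphi^1$ is an isomorphism, since then
\[
\varphi^p \colon f^* \Omega_X^p(\log D) = \bigwedge^p f^* \Omega_X^1(\log D) \xrightarrow{\sim} \bigwedge^p \Omega_Y^1(\log D') = \Omega_Y^p(\log D')
\]
follows formally.

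To check that $\varphi^1$ is an isomorphism, I work locally on $X$. Choose coordinates $z_1, \ldots, z_n$ on an open $U \subset X$ with $D \cap U = (z_n = 0)$, so that $\Omega_X^1(\log D)$ is freely generated on $U$ by $dz_1, \ldots, dz_{n-1}, dz_n / z_n$. By the explicit construction of the cyclic cover (Proposition \ref{cyclic}), the preimage $f^{-1}(U)$ is isomorphic to $\Spec \shO_U[w]/(w^m - z_n)$, so $z_1, \ldots, z_{n-1}, w$ form local coordinates on $Y$ and $D' = (w = 0)$ locally. In these coordinates $\Omega_Y^1(\log D')$ is freely generated by $dz_1, \ldots, dz_{n-1}, dw/w$.

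The only non-trivial computation is the pullback of $dz_n/z_n$. From $z_n = w^m$ we get $f^* dz_n = d(w^m) = m w^{m-1} \, dw$, hence
\[
\varphi^1\bigl(f^*(dz_n/z_n)\bigr) = \frac{m w^{m-1}\, dw}{w^m} = m \cdot \frac{dw}{w}.
\]
Since $m$ is a unit (we are in characteristic zero, or at least $m$ is invertible), $\varphi^1$ sends the free generating set $\{dz_1, \ldots, dz_{n-1}, dz_n/z_n\}$ of $f^* \Omega_X^1(\log D)$ to the free generating set $\{dz_1, \ldots, dz_{n-1}, m \, dw/w\}$ of $\Omega_Y^1(\log D')$, so it is an isomorphism on $f^{-1}(U)$. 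Covering $X$ by such coordinate charts yields the result globally. The main conceptual point—and the only place where the specific structure of the cyclic cover enters—is the factor $m w^{m-1}$ produced by differentiating $w^m = z_n$; everything else is bookkeeping with exterior powers.
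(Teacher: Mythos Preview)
Your proof is correct and is precisely the standard local computation that the paper defers to (the paper does not prove this lemma itself but cites \cite{EV}*{\S2} and \cite{Lazarsfeld}*{Lemma 4.2.4}). The reduction to $p=1$ via exterior powers, followed by the coordinate calculation $dz_n/z_n \mapsto m\, dw/w$ using $z_n = w^m$, is exactly the argument those references give.
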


\begin{proof}[Sketch of proof of Theorem \ref{nakano}]
By Serre duality it suffices to show the second part of the statement. 
For $m \gg 0$, let $D \in |mL|$ be a smooth divisor.
One can assume by induction on $n = {\rm dim}~X$ that we already know Kodaira-Nakano vanishing 
on $D$, so that 
$$H^q (D, \Omega_D^{p-1} \otimes L_{|D}^{-1} ) = 0 \,\,\,\, {\rm for~} p + q < n.$$
Using this and passing to cohomology in the sequence in Lemma \ref{log_sequences}(i), it suffices then to prove that
$$H^q (X, \Omega_X^p ({\rm log}~D) \otimes L^{-1} ) = 0 \,\,\,\, {\rm for~} p + q < n.$$

Let now $f: Y \rightarrow X$ be the $m$-fold cyclic cover branched along $D$ as 
in Proposition \ref{cyclic}, with $f^* D = mD^\prime$ and $L^\prime = \shO_Y (D^\prime)$. 
 Proposition  \ref{cyclic} says that $Y$ and $D^\prime$ can be chosen to be smooth; also, 
 $D^\prime$ is obviously ample. 
Since $f$ is a finite cover, using Lemma \ref{log_cover} what we want is equivalent to showing that
$$H^q (Y, \Omega_Y^p ({\rm log}~D^\prime) \otimes \shO_Y (- D^\prime)) = 0 \,\,\,\, {\rm for~} p + q < n.$$
One can now appeal to the exact sequence in Lemma \ref{log_sequences}(ii). Using this, our desired 
statement is equivalent to the fact that the restriction maps
$$r_{p,q} : H^q (Y, \Omega_Y^p) \longrightarrow H^q (D^\prime, \Omega_{D^\prime}^p)$$
are isomorphisms for $p+q \le n-2$, and injective for $p+q = n-1$. But this follows immediately 
from the weak Lefschetz theorem, as the restriction maps
$$H^i (Y, \CC) \longrightarrow H^i (D^\prime, \CC)$$
are morphisms of Hodge structures.
\end{proof}

Saito's generalization of Theorem \ref{nakano} is stated and proved in Section \ref{main}, 
while important special cases are explained in Section \ref{particular_cases}. 
Before being able to do this we need a lengthy review of background material. 
The reader may already visit those sections however, for a first encounter with the main topic.

\subsection{Filtered $\Dmod$-modules and de Rham complexes}\label{filtered}

In this section I will recall some filtered $\Dmod$-module terminology and facts used in the paper. Excellent 
introductions to the subject are for instance the book by Hotta-Takeuchi-Tanisaki \cite{HTT} and the lecture notes of Maisonobe-Sabbah \cite{Maisonobe_Sabbah}.
In what follows the standard language is that of right $\Dmod$-modules; as emphasized in \cite{Saito-MHP}, this is often more appropriate in the theory of mixed Hodge modules, for instance due to the fact that it is the natural setting for considering direct image or duality functors.  Occasionally however left $\Dmod$-modules 
will be necessary, in which case I will state explicitly that we are considering that setting and are performing the left-right transformation described below. 

\noindent
{\bf Definitions.}
Let $X$ be a  smooth complex variety. A filtered right $\Dmod$-module on $X$ is a $\Dmod_X$-module with an increasing filtration $F = F_\bullet \Mmod$ by coherent $\shO_X$-modules, bounded from below and satisfying
$$F_l \Mmod \cdot F_k \Dmod_X \subseteq F_{k+l} \Mmod \,\,\,\,{\rm for~all~} k, l \in \ZZ.$$
In addition, the filtration is \emph{good} if the inclusions above are equalities for $k \gg 0$. This condition is equivalent to the fact that the total 
associated graded object
$$\Gr^F_{\bullet} \Mmod = \bigoplus_k \Gr_k^F \Mmod = \bigoplus_k F_k \Mmod / F_{k-1} \Mmod$$
is finitely generated over $\Gr_{\bullet}^F \Dmod_X \simeq {\rm Sym}~T_X$, i.e. induces a coherent sheaf on the cotangent bundle $T^*X$.
Assuming that such a good filtration exists (in which case $\Mmod$ is also called coherent), the closed subset 
$${\rm Char}( \Mmod) : = {\rm Supp}~ \Gr^F_{\bullet} \Mmod  \subseteq T^* X$$
is called the characteristic variety of $X$. A well-known result of Bernstein says that $\dim {\rm Ch}(\Mmod) \ge \dim X$, and $\Mmod$ is 
called \emph{holonomic} if this is actually an equality. The $\Dmod$-modules we consider later will only be of this kind.

\noindent
{\bf Left-right rule.}
The canonical bundle $\omega_X$ is naturally endowed with a right $\Dmod_X$-module structure. Concretely, if $z_1, \ldots, z_n$ are local coordinates on $X$, for any $f \in \shO_X$ and any $P \in \Dmod_X$, the action is
$$(f \cdot dz_1 \wedge \cdots \wedge dz_n) \cdot P = {}^tP (f) \cdot dz_1 \wedge \cdots \wedge dz_n.$$ 
Here, if $P = \sum_{\alpha} g_\alpha \partial^\alpha$, then ${}^t P =  \sum_{\alpha} (-\partial)^\alpha g_\alpha$ is its formal adjoint.

Using this structure, as one often needs to switch between the two, let's recall 
the one-to-one correspondence between left and right $\Dmod_X$-modules given by 
$$\Nmod \mapsto \Mmod = \Nmod \otimes{_{\shO_X}} \omega_X \,\,\,\, {\rm and} \,\,\,\, \Mmod \mapsto \Nmod = \mathcal{H}om_{\shO_X} (\omega_X, \Mmod).$$
In terms of filtrations, the left-right rule is
$$F_p \Nmod = F_{p- n} \Mmod \otimes_{\shO_X} \omega_X^{-1}.$$

\noindent
{\bf de Rham complex.}
While we will consider right $\Dmod_X$-modules when talking about Hodge modules, one naturally associates
the de Rham complex to the corresponding left $\Dmod_X$-module $\Nmod$:
\[
\DR_X(\Nmod) = \Bigl\lbrack
		\Nmod \to \Omega_X^1 \tensor \Nmod \to \dotsb \to \Omega_X^n \tensor \Nmod
	\Bigr\rbrack,
\]
which is a $\CC$-linear complex placed in degrees $0, \dotsc, n$, with maps induced by the corresponding integrable connection
$\nabla: \Nmod \rightarrow \Nmod \otimes \Omega_X^1$. It turns out that the natural de Rham complex to consider for 
the right $\Dmod$-module $\Mmod$ (sometimes called a Spencer complex; see \cite{Maisonobe_Sabbah}*{1.4.2}) satisfies
$$\DR_X (\Mmod) \simeq \DR_X (\Nmod) [n].$$
 
By definition the filtration $F_{\bullet} \Mmod$ is
compatible with the $\Dmod_X$-module structure on $\Mmod$ and therefore, using the left-right rule above, this induces a 
filtration on the de Rham complex of $\Mmod$ by the formula
\[
	F_k \DR_X(\Mmod) = \Bigl\lbrack
		\bigwedge^n T_X \otimes F_{k-n} \Mmod \to \bigwedge^{n-1} T_X \tensor F_{k+1 -n} \Mmod \to \dotsb 
			\to  F_{k} \Mmod
	\Bigr\rbrack [n].
\]
The associated graded complexes for the filtration above are 
\[
	\Gr_k^F \DR_X(\Mmod) = \Big\lbrack
		\bigwedge^n T_X \otimes  \Gr_{k-n}^F \Mmod \to  \bigwedge^{n-1} T_X \otimes  \Gr_{k+1- n}^F \Mmod \to \dotsb \to
			\Gr_{k}^F \Mmod
	\Big\rbrack [n],
\]
which are now complexes of coherent $\OX$-modules in degrees $-n, \dotsc, 0$, 
and provide objects in $\D^b (X)$, the bounded derived category of coherent sheaves on $X$.

We will be particularly interested in the lowest non-zero graded piece of a filtered $\Dmod$-module. 
For one such right $\Dmod_X$-module $(\Mmod, F)$ define
\begin{equation}\label{lowest_def}
p (\Mmod) : = {\rm min}~\{p ~|~ F_p \Mmod \neq 0\}\,\,\,\,{\rm and} \,\,\,\, S (\Mmod) := F_{p(\Mmod)} \Mmod.
\end{equation}
For the associated left $\Dmod_X$-module we then have
$$p(\Nmod) = p (\Mmod) + n \,\,\,\, {\rm and} \,\,\,\,S(\Nmod) = S(\Mmod) \otimes \omega_X^{-1}. $$

\noindent
{\bf Pushforward.}
Let $f\colon X \rightarrow Y$ be a morphism of smooth complex varieties. We consider the associated transfer module
$$\Dmod_{X\to Y} : = \shO_X \otimes_{f^{-1} \shO_Y} f^{-1} \Dmod_Y.$$
It has the structure of a $(\Dmod_X, f^{-1} \Dmod_Y)$-bimodule, and it has a filtration given by 
$f^* F_k \Dmod_Y$. For a right $\Dmod_X$-module $\Mmod$, one can define a naive pushforward  as
$$f_* \Mmod : = f_* \big(\Mmod \otimes_{\Dmod_X} \Dmod_{X\to Y} \big),$$
where on the right hand side $f_*$ is the usual sheaf-theoretic direct image. However, the appropriate pushforward is 
in fact at the level of derived categories, namely
$$f_+ : {\bf D} (\Dmod_X) \longrightarrow {\bf D} (\Dmod_Y), \,\,\,\,\, \Mmod^{\bullet} \mapsto 
\derR f_* \big(\Mmod^{\bullet} \overset{\derL}{\otimes}_{\Dmod_X} \Dmod_{X\to Y} \big).$$
This is due to the left exactness of $f_*$ versus the right exactness of $\otimes$. See \cite[\S1.5]{HTT} for more details; in \emph{loc. cit.}
this last functor is denoted by $\int_f$.

Given a proper morphism of smooth varieties $f: X \rightarrow Y$, Saito has also constructed in \cite[\S2.3]{Saito-MHP} 
a filtered direct image functor 
$$f_+ : \D^b \big({\rm FM}(\Dmod_X)\big) \rightarrow \D^b \big({\rm FM}(\Dmod_Y)\big).$$
Here the two categories are the bounded derived categories of filtered $\Dmod$-modules on $X$ and $Y$ respectively.
Without filtration, it is precisely the functor above. The filtration requires more work; I will include  a few details below for the special 
$\Dmod$-modules that we consider in this paper.

\noindent
{\bf Strictness.}
A special property that is crucial in the theory of filtered $\Dmod$-modules underlying Hodge modules is the strictness of the filtration.  
Let 
$$f: (\Mmod, F) \rightarrow  (\Nmod, F)$$
be a morphism of filtered $\Dmod_X$-modules, i.e. such that 
$f(F_k \Mmod) \subseteq F_k \Nmod$ for all $k$. Then $f$ is called \emph{strict} if
$$f( F_k \Mmod)  = F_k \Nmod \cap f(\Mmod) \,\,\,\,\,\, {\rm for ~all~}k.$$ 
Similarly, a complex of filtered $\Dmod_X$-modules  $(\Mmod^\bullet, F_{\bullet} \Mmod^\bullet)$ is called strict if all of its differentials are strict. 
It can be easily checked that an equivalent interpretation is the following: the complex is strict if and only if, for every $i, k \in \ZZ$, we have that the induced  morphism 
$$\mathcal{H}^i (F_k \Mmod^\bullet) \longrightarrow \mathcal{H}^i \Mmod^\bullet$$
is injective. It is only in this case that the cohomologies of $\Mmod^\bullet$  can also be seen as filtered $\Dmod_X$-modules.

Via a standard argument, the notion of strictness makes sense more generally for objects in the derived category ${\bf D}^b \big({\rm FM}(\Dmod_X)\big)$ of filtered  $\Dmod_X$-modules. In the next sections, a crucial property of the filtered $\Dmod$-modules we consider is the following. If $f: X \rightarrow Y$ is a proper morphism of smooth varieties, and $(\Mmod, F)$ is one such filtered right $\Dmod_X$-module, then $f_+ (\Mmod, F)$ is strict as an object in $\D^b \big({\rm FM}(\Dmod_Y)\big)$;  here $f_+$ is the filtered direct image functor mentioned 
above. Given the previous discussion, this means that 
$$H^i \big( F_k f_+ (\Mmod, F)\big) \rightarrow H^i f_+ (\Mmod, F)$$
is injective  for all integers $i$ and $k$.
Finally, Saito's definition of the filtration on the direct image implies that this is equivalent to
the injectivity of the mapping
$$R^i f_* \big(F_k (\Mmod \overset{\derL}{\otimes}_{\Dmod_X} \Dmod_{X\to Y}) \big) \rightarrow 
R^i f_* (\Mmod \overset{\derL}{\otimes}_{\Dmod_X} \Dmod_{X\to Y}).$$
Up to a choice of representatives, the image is the filtration $F_k H^i f_+ (\Mmod, F)$. Thus in the strict case, one has a reasonably 
good grasp of the filtration on direct images, and the cohomologies of direct images are themselves filtered $\Dmod$-modules. Even more is true
in case $(\Mmod, F)$ underlies a Hodge module, as we will see in the next section.

\subsection{Hodge modules and variations of Hodge structure}\label{VHS}
Starting with this section, and up to \S\ref{V-filtration}, I will recall the objects that are the main focus of the paper. In the next section I will give several important examples. The main two references for the theory of Hodge modules are Morihiko Saito's papers \cite{Saito-MHP} in the pure case, and \cite{Saito-MHM} in the mixed case. A quite gentle but comprehensive overview of the theory was recently provided by Schnell \cite{Schnell-MHM}. Here I will give a brief review of the information needed for understanding the 
statement and proof of Saito's vanishing theorem;  the reader is encouraged to 
consult the references above for further information.

Let us first recall the notion of a variation of Hodge structure, which is the ``smooth" version of a Hodge module.
notion. If $X$ is a smooth complex variety, a variation of $\QQ$-Hodge structure of weight $\ell$ on $X$ is the data 
$$\V = (\mathcal{V}, F^\bullet, \V_{\QQ})$$
where: 

\noindent
$\bullet$~$\V_{\QQ}$ is a $\QQ$-local system on $X$. 

\noindent
$\bullet$~ ~$\mathcal{V} = \V_{\QQ}\otimes_{\QQ} \shO_X$ is a vector bundle with flat connection $\nabla$, 
endowed with a decreasing filtration with subbundles $F^p = F^p \mathcal{V}$ satisfying the following two properties:

\noindent
$\bullet$~for all $x \in X$, the data $\V_x = (\mathcal{V}_x, F^\bullet_x, \V_{\QQ, x})$ is a Hodge structure of weight $\ell$.

\noindent
$\bullet$~Griffiths transversality: for each $p$, $\nabla$ induces a morphism
$$\nabla: F^p \longrightarrow F^{p-1} \otimes \Omega_X^1.$$

Considering the Tate twist $\QQ(-\ell) = (2\pi i)^{-\ell} \QQ$, a polarization on $\V$ is a morphism 
$$Q: \V_{\QQ} \otimes \V_{\QQ} \longrightarrow \QQ(-\ell)$$
inducing a polarization of the Hodge structure $\V_x$ for each $x \in X$. We say that $\V$ is polarizable if one 
such polarization exists.

In order to generalize this notion, consider now $X$ to be a smooth complex algebraic variety of dimension $n$, 
and let $Z$ be an irreducible closed subset. 
Let $\V = (\mathcal{V}, F^\bullet, \V_{\QQ})$ be a polarizable variation of $\QQ$-Hodge structure of weight $\ell$ on an open set $U$ in the smooth locus of $Z$. Following \cite{Saito-MHP}, one can change terminology and call it a smooth pure Hodge module of weight $\dim Z + \ell$ on U, whose main constituents are:

\noindent
(i)~The right $\Dmod$-module $\Mmod = \mathcal{V} \otimes \omega_U$ with filtration
$F_p \Mmod =  F^{-p-n} \mathcal{V} \otimes \omega_U$.

\noindent
(ii)~The $\QQ$-perverse sheaf $P = \V_{\QQ} [n]$. 

According to Saito's theory, this extends uniquely to a \emph{pure polarizable Hodge module} $M$ of weight $\dim Z + \ell$ on $X$, whose support is $Z$. This has an underlying perverse sheaf, which is the intersection complex ${\rm IC}_Z( \V_{\QQ})=  {}^pj_{!*}\V_{\QQ}$ associated to the given local system. For this reason one sometimes uses the notation $M : = j_{!*}\V$. 
It also has an underlying $\Dmod$-module, namely the minimal extension of $\Mmod$, corresponding to ${\rm IC}_Z( \V_{\CC})$
via the Riemann-Hilbert correspondence.\footnote{A direct construction can be given, though this requires quite a bit of work 
and will not be used here. The reader interested in details can consult \cite[\S3.4]{HTT}.}
Its filtration is (nontrivially) determined by the Hodge filtration on $U$, as we will see in \S\ref{V-filtration}.

More generally, in \cite{Saito-MHP} Saito introduced an abelian category of ${\rm HM} (X, \ell)$ of pure polarizable Hodge modules on $X$ of weight $\ell$. The main two constituents of one such Hodge module $M$ are still:

\noindent
(i) ~ A filtered (regular) holonomic $\Dmod_X$-module $(\Mmod, F)$,  where $F = F_{\bullet} \Mmod$
is a good filtration by $\OX$-coherent subsheaves,  so that $\Gr_{\bullet}^F \!
\Mmod$ is coherent over $\Gr_{\bullet}^F \Dmod_X$. 

\noindent
(ii) ~ A $\QQ$-perverse sheaf $P$ on $X$ whose complexification corresponds to $\Mmod$ via the Riemann-Hilbert correspondence, so that there is an isomorphism
\[
	\alpha: \DR_X(\Mmod) \overset{\simeq}{\longrightarrow} P \tensor_{\QQ} \CC.
\]

These are subject to a list of conditions, which are defined by induction on the dimension of the support of $M$. 
If $X$ is a point, a pure Hodge module is simply a polarizable Hodge structure of weight $\ell$. In general, it is required that the nearby and vanishing cycles associated to  $M$ with respect to any locally defined holomorphic function are again Hodge modules, now on a variety of smaller dimension. This will not play a key role here, but a nice discussion can be found in \cite{Schnell-MHM}*{\S12}.

The definition of a polarization on $M$ is quite involved, but in any case involves an isomorphism ${\bf D} P \simeq P( \ell)$, 
where ${\bf D} P$ is the Verdier dual of the perverse sheaf $P$ (together of course with further properties 
compatible with the inductive definition of Hodge modules suggested above); more details in \S\ref{duality_section}.

One of the fundamental results of Saito \cite{Saito-MHP}, \cite{Saito-MHM} clarifies the picture considerably; it says that we mainly need to think of the examples 
described above as extensions of variations of Hodge structure. 
Indeed, the existence of polarizations makes the category ${\rm HM} (X, \ell)$ semi-simple: each object admits a 
decomposition by support, and simple objects with support equal to an irreducible subvariety $Z
\subseteq X$ (called pure Hodge modules with \emph{strict support} $Z$, i.e. with no nontrivial subobjects or quotient objects whose support is $Z$) are obtained from polarizable variations of Hodge structure on Zariski-open subsets of $Z$. Formally, 
\begin{equation}\label{support_decomposition}
{\rm HM}(X, \ell) = \bigoplus_{Z\subseteq X} {\rm HM}_Z (X, \ell),
\end{equation}
with ${\rm HM}_Z (X, \ell)$ the subcategory of pure Hodge modules of weight $\ell$ with strict support $Z$. In other words:

\begin{theorem}[{\bf Simple objects}, {\cite[Theorem 3.21]{Saito-MHM}}]\label{structure}
Let $X$ be a smooth complex variety, and $Z$ an irreducible closed subvariety of $X$. Then:
\begin{enumerate}
\item Every polarizable variation of Hodge structure of weight $\ell - \dim Z$ defined on a nonempty open set of $Z$ extends uniquely 
to a pure polarizable Hodge module of weight $\ell$ with strict support $Z$.
\item Conversely, every pure polarizable Hodge module of weight $k$ with strict support $Z$ is obtained in this way.
\end{enumerate}
\end{theorem}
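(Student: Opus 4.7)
The plan is to establish the bijection in two directions; the converse direction (2) is essentially an unraveling of definitions, while direction (1) is where all the technical content of Saito's theory is concentrated.

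For (2): given a simple pure polarizable Hodge module $M$ of weight $k$ with strict support $Z$, the strict support hypothesis forces the underlying $\QQ$-perverse sheaf to be of the form ${\rm IC}_Z(L)$ for some simple $\QQ$-local system $L$ on a sufficiently small Zariski open $U \subseteq Z_{\rm reg}$. The restriction $M|_U$ is then a smooth pure Hodge module of weight $k$ on $U$, which by the dictionary recalled immediately before the theorem is precisely the data of a polarizable variation of $\QQ$-Hodge structure of weight $k - \dim Z$ on $U$ whose underlying local system is $L$; the polarization on $M$ restricts to one on this VHS.

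For (1), given a polarizable VHS $\V = (\mathcal V, F^\bullet, \V_\QQ)$ of weight $\ell - \dim Z$ on an open $U \subset Z_{\rm reg}$, the perverse sheaf and bare $\Dmod$-module of the desired extension are forced on us: $P$ must be ${\rm IC}_Z(\V_\QQ)$ and $\Mmod$ must be the Deligne-Goresky-MacPherson minimal extension of $\mathcal V \otimes \omega_U$ to $X$, with the comparison $\alpha$ supplied by Riemann-Hilbert. The hard part -- and the main obstacle -- is to equip this minimal extension with a good filtration $F_\bullet$ satisfying the full list of Hodge module axioms. This is carried out by induction on $\dim Z$. After a log resolution, the complement of $U$ in $Z$ becomes a simple normal crossings divisor along which Schmid's nilpotent orbit theorem and Deligne's canonical extension produce a canonical extension of $F^\bullet \V$. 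To push $F_\bullet$ across each subsequent singular stratum with local defining function $t$, one invokes the Kashiwara-Malgrange $V$-filtration together with the compatibility, recalled in the preliminaries and developed in \cite{Saito-MHP}, between $F_\bullet \Mmod$, the pieces $F_\bullet V^\alpha \Mmod$, and the actions of $t$ and $\partial_t$. The induction hypothesis is supplied by the requirement, built into the definition of Hodge modules, that the nearby cycles $\psi_t M$ and unipotent vanishing cycles $\phi_{t,1} M$ are Hodge modules on a variety of strictly smaller dimension, hence already equipped with their own Hodge filtrations.

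Uniqueness then follows because any two such extensions $M, M'$ must agree on $U$, and the strict support condition -- which by the direct sum decomposition \eqref{support_decomposition} excludes any nontrivial subobject or quotient supported on $Z \setminus U$ -- forces $M \simeq M'$. A polarization on $\V$ extends uniquely by the same inductive procedure, using Saito's compatibility of polarizations with nearby cycles. The essential difficulty throughout is the construction of $F_\bullet$ on the minimal extension: unlike the perverse sheaf and the bare $\Dmod$-module, $F_\bullet$ is not formally determined by the data on $U$, and its construction requires the full $V$-filtration analysis that occupies much of \cite{Saito-MHP}.
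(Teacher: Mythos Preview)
The paper does not prove this theorem at all: it is stated with the attribution \cite[Theorem 3.21]{Saito-MHM} and then the exposition moves on immediately. This is one of the deep structural results of Saito's theory that the present paper simply quotes as background, so there is no ``paper's own proof'' to compare your proposal against.

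That said, what you have written is a fair high-level outline of the strategy in Saito's original work, and it is honest about where the difficulty lies (the construction of $F_\bullet$ on the minimal extension via the $V$-filtration and induction on dimension). But it is a sketch, not a proof: the step ``invoke the compatibility between $F_\bullet \Mmod$, the pieces $F_\bullet V^\alpha \Mmod$, and the actions of $t$ and $\partial_t$'' is exactly the content of large portions of \cite{Saito-MHP}, and the verification that the resulting filtered object satisfies all the Hodge module axioms (in particular that the polarization extends and that the associated graded pieces of nearby and vanishing cycles are again polarizable Hodge modules of the correct weight) is substantial. If you intend this as a replacement for the paper's treatment, you should be explicit that you are only indicating the architecture of Saito's argument and referring to \cite{Saito-MHP} and \cite{Saito-MHM} for the actual proof, which is what the paper itself does by citation.
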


Furthermore, M. Saito introduced in \cite{Saito-MHM} the abelian category $\MHM (X)$ 
of (graded-polarizable) mixed Hodge modules on $X$. In addition to data as in (i) and (ii) above, 
in this case a third main constituent is:

\noindent
(iii)~A finite increasing weight filtration $W_{\bullet} M$ of $M$ by
objects of the same kind, compatible with $\alpha$, such that the graded quotients
$\Gr_{\ell}^W M = W_{\ell} M / W_{\ell-1} M$ are pure Hodge modules in ${\rm HM} (X, \ell)$. 

Again, if $X$ is a point a mixed Hodge module is a graded-polarizable mixed Hodge structure, while in general
these components are subject to several conditions defined by induction on the
dimension of the support of $\Mmod$, involving
the graded quotients of the nearby and vanishing cycles of $\Mmod$.
For a further discussion of the definition see also \cite{Schnell-MHM}*{\S20}. I do not insist on giving more background 
on mixed Hodge structures and modules, as they will  be used in what follows only by reduction to pure Hodge modules.
There is however one important class of examples worth pointing out.

Let $D$ be a divisor in $X$ with complement $U$, and assume that we are given a variation of Hodge structure $\V$ on $U$.
Besides the pure Hodge module extension whose underlying 
perverse sheaf is the intersection complex ${\rm IC}_Z( \V_{\QQ})$, it is also natural to consider a mixed Hodge module 
extension, denoted $j^*j^{-1} M$ in \cite{Saito-MHM}, whose underlying perverse sheaf is simply the direct image $j_*\V_\QQ$.
More precisely, 
\[ j_*j^{-1}M = \big( (\cV(*D), F); j_*\V_\QQ\big),\]
where $\cV(*D)$ is the localization of the flat bundle 
$\cV$ underlying $\V$ along $D$, endowed with a meromorphic connection (see e.g. \cite[\S5.2]{HTT}). Further details are given in Example \ref{localization}.

Returning to the general theory, one of the most important results is M. Saito's theorem on the behavior 
of direct images of pure polarizable Hodge modules via projective morphisms. (I am only stating part of it here.)

\begin{theorem}[{\bf Stability Theorem}, \cite{Saito-MHP}*{Th\'eor\`eme 5.3.1}]\label{stability}
Let $f: X \rightarrow Y$ be a projective morphism of smooth complex varieties, and $h = c_1 (H)$ for a line bundle $H$ on $X$ which is ample relative to $f$. If $M \in {\rm HM} (X, \ell)$ is a polarizable Hodge module, then 

\noindent
(i) The filtered direct image $f_+ (\Mmod, F)$ is strict, and $H^i f_+\Mmod$ underlies a polarizable 
Hodge module $M_i  \in {\rm HM}(Y, \ell + i)$.

\noindent
(ii) For every $i$ one has an isomorphism of pure Hodge modules
$$h^i : M_{-i} \longrightarrow M_i (i).$$
\end{theorem}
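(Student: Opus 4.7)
\textit{Proof proposal.} The plan is to follow Saito's inductive scheme on $d = \dim \supp(M)$. Using the decomposition (\ref{support_decomposition}) I may assume $M$ has strict support equal to an irreducible subvariety $Z \subseteq X$, so by Theorem \ref{structure} it is the unique extension of a polarizable variation of Hodge structure $\V$ defined on a Zariski-open smooth subset $U \subseteq Z$. The goal is to transport these structures along $f$ while maintaining the filtered $\Dmod$-module data intact.

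First I would reduce to a smooth projective $f$. Factoring $f$ through its graph, $X \hookrightarrow X \times \PPn{N} \to Y$ (with $H$ relatively ample on the projection for some $N$), the problem splits into two pieces: a closed embedding $i$, and a smooth projective morphism $p$. For $i$, strictness of $i_+(\Mmod,F)$ is automatic, since $i_+$ is exact and Kashiwara's equivalence identifies $i_+\Mmod$ with a filtered $\Dmod$-module supported on $i(X)$; the Hodge module structure is preserved by definition. Hence the real content is Theorem \ref{stability} for a smooth projective morphism $p$, and I can further apply a log resolution so that the discriminant $\DSigma \subset Y$ over which $\V$ fails to extend as a smooth VHS is a simple normal crossings divisor.

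The heart of the argument has two faces. Over $Y \setminus \DSigma$, the relative de Rham cohomology $H^i p_+\Mmod$ is locally free, and the classical theorems of Griffiths, Deligne and Zucker give: (a) $E_1$-degeneration of the Hodge-to-de Rham spectral sequence (which is precisely \emph{strictness} of $p_+(\Mmod,F)$ on this open locus), (b) polarizable VHS structure on each $H^i p_+\Mmod$ of weight $\ell + i$, and (c) the Hard Lefschetz isomorphism $h^i : H^{-i}p_+ \Mmod \to H^i p_+ \Mmod(i)$. To extend these conclusions across $\DSigma$, I would use the compatibility of $p_+$ with the nearby/vanishing cycle functors $\psi_{g,1}, \phi_{g,1}$ for local equations $g$ of components of $\DSigma$: the nearby and vanishing cycles of $p_+\Mmod$ are, by the proper base change type statement, direct images under $p$ of the nearby and vanishing cycles of $\Mmod$ along $p^{-1}(\DSigma)$, which live on varieties of smaller support dimension. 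The inductive hypothesis, applied to these, says they underlie pure Hodge modules with the right weights and strict filtration behavior. The tight link between $F_\bullet$ and the Kashiwara--Malgrange $V$-filtration on these objects forces $F_\bullet$ on $p_+\Mmod$ itself to be strict and compatible with the perverse sheaf $\derR p_* P$, thereby yielding the $M_i \in \mathrm{HM}(Y, \ell+i)$.

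The hardest step is the strictness/$V$-filtration compatibility across $\DSigma$: one must verify that the filtered direct image, a priori only an object of $\D^b(\mathrm{FM}(\Dmod_Y))$, has cohomologies that are themselves filtered $\Dmod_Y$-modules and that these come equipped with the required $V$-filtrations satisfying all the axioms inductively defining pure Hodge modules. Finally, the Hard Lefschetz isomorphism $h^i : M_{-i} \to M_i(i)$ would be deduced by extending the classical Hard Lefschetz on $H^*(X_y, \QQ)$ for $y \in Y \setminus \DSigma$ to all of $Y$: strictness plus the strict support decomposition, applied to the morphism of Hodge modules $h^i$ (which agrees with a polarized isomorphism over the VHS locus), forces it to be an isomorphism globally, since a morphism of simple pure Hodge modules that is nonzero generically is an isomorphism.
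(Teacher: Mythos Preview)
The paper does not actually prove this theorem: it is stated with a citation to \cite{Saito-MHP}*{Th\'eor\`eme 5.3.1} and used as a black box throughout. So there is no ``paper's own proof'' to compare against; your proposal is an attempt to sketch Saito's original argument.

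As such a sketch, the broad shape is right---induction via nearby/vanishing cycles, reduction to strict support, factorization through a closed embedding---but there is a real gap in your reduction step. After embedding $X$ in $X \times \PPn{N}$ and projecting, the morphism $p$ is smooth projective, but the Hodge module $i_+ M$ is supported on the graph of $f$, which is a closed subvariety of $X \times \PPn{N}$; it is \emph{not} a variation of Hodge structure on the total space. So the ``classical theorems of Griffiths, Deligne and Zucker'' do not apply over $Y \setminus \DSigma$ in the way you describe: those results govern smooth families of smooth varieties with a VHS on the whole fiber, whereas here each fiber $p^{-1}(y)$ carries a Hodge module with singular support. You have not actually reduced to a situation where classical Hodge theory settles the generic case. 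Relatedly, your ``apply a log resolution so that $\DSigma \subset Y$ is SNC'' step modifies the target $Y$, which changes the statement you are trying to prove; one cannot freely resolve $Y$ without knowing the theorem already holds for the resolution map.

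Saito's actual proof is a considerably more intricate simultaneous induction: one proves stability, polarizability, Hard Lefschetz, and the compatibility of $F$ with $V$ all at once, reducing ultimately to the case of a projective morphism to a curve and invoking Zucker's results on $L^2$-cohomology of VHS over punctured disks. The key technical input is that strictness and the Hodge module axioms can be checked after taking nearby/vanishing cycles along a generic hyperplane in $Y$, which drops the dimension of the base. Your outline captures the spirit of this descent, but the place where classical Hodge theory enters is at the very bottom of the induction (dimension one base), not at the generic point of an arbitrary $Y$.
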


As alluded to in the paragraph on strictness in \S\ref{filtered}, the statement in (i) is a key property of $\Dmod$-modules underlying Hodge modules that is not shared by arbitrary filtered $\Dmod$-modules; for more on this see e.g. \cite{Schnell-MHM}*{\S26-28}. One important consequence is  Saito's formula 
\cite[2.3.7]{Saito-MHP} giving the commutation of the graded quotients of the de Rham complex with direct images:
$$R^i f_* \Gr_k^F \DR_X(\Mmod) \simeq \Gr_k^F \DR_Y (H^i f_+ \Mmod).$$

A fundamental consequence of the theorem above deduced in \cite{Saito-MHP} is the analogue of the decomposition theorem for pure polarizable Hodge modules, obtained formally from the above as an application of Deligne's criterion for the degeneration of the Leray spectral sequence in terms of the Lefschetz operator. The result is often stated for the underlying perverse sheaves, extending the well-known BBD-decomposition theorem; here I state the filtered 
$\Dmod$-modules version, which is crucial for the applications presented later.

\begin{theorem}[{\bf Saito Decomposition Theorem}]\label{decomposition}
Let $f: X \rightarrow Y$ be a projective morphism of smooth complex varieties, 
and let $M \in {\rm HM} (X, \ell)$, with underlying filtered $\Dmod$-module $(\Mmod, F)$. Then 
$$ f_+ (\Mmod, F) \simeq \bigoplus_{i\in \ZZ} H^i f_+ (\Mmod, F) [-i]$$
in $\D^b \big({\rm FM}(\Dmod_Y)\big)$.
\end{theorem}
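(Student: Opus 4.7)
The plan is to apply Deligne's classical splitting criterion from \emph{Th\'eor\`eme de Lefschetz et crit\`eres de d\'eg\'en\'erescence de suites spectrales} in the filtered derived category $\D^b\bigl({\rm FM}(\Dmod_Y)\bigr)$, with the Lefschetz operator supplied by a relatively ample class. Recall the criterion: if $K$ is a bounded object in a triangulated category equipped with a self-map $\eta: K \to K[2]$ such that the induced maps $\eta^i: H^{-i} K \to H^i K$ are isomorphisms for all $i \geq 0$, then $K \simeq \bigoplus_i H^i K [-i]$. Theorem \ref{stability} supplies precisely the two ingredients needed to invoke this criterion here: strictness of the filtered direct image, which guarantees that the cohomology objects $H^i f_+(\Mmod,F)$ are honest filtered $\Dmod_Y$-modules, and relative Hard Lefschetz, which verifies the isomorphism hypothesis.

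Concretely, I would set $K := f_+(\Mmod, F)$ in $\D^b\bigl({\rm FM}(\Dmod_Y)\bigr)$. The relatively ample class $h = c_1(H)$ gives, via cup product in the derived direct image, a morphism of filtered complexes $L: K \to K(1)[2]$, where $(1)$ denotes the Tate twist shifting the filtration appropriately. By Theorem \ref{stability}(i), the complex $K$ is strict, so its cohomologies $H^i K = H^i f_+(\Mmod,F)$ are genuine filtered $\Dmod_Y$-modules underlying pure Hodge modules $M_i \in {\rm HM}(Y, \ell+i)$. By Theorem \ref{stability}(ii), the iterated operator $L^i$ induces an isomorphism $H^{-i} K \simeq H^i K(i)$ of filtered $\Dmod_Y$-modules for every $i \geq 0$. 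These are exactly the hypotheses of the splitting criterion, now applied in the filtered setting.

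The main obstacle is verifying that Deligne's splitting argument, which is formal in an ordinary triangulated category, genuinely transfers to $\D^b\bigl({\rm FM}(\Dmod_Y)\bigr)$, where morphisms, cones, and quasi-isomorphisms must all be compatible with the filtration. Strictness, supplied once again by Theorem \ref{stability}(i), is indispensable at this step: without it the cohomology objects would fail to carry canonical filtrations and the decomposition could only exist after forgetting them. With strictness in place, one truncates $K$ inductively along the natural $t$-structure; at each stage the Hard Lefschetz isomorphism $L^i$ splits the truncation triangle, so that the top cohomology $H^i K[-i]$ can be peeled off as a direct summand. Iterating and reassembling the splittings yields the claimed decomposition in $\D^b\bigl({\rm FM}(\Dmod_Y)\bigr)$.
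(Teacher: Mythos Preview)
Your proposal is correct and matches the paper's own approach: the paper does not give a detailed proof but states that the decomposition is ``obtained formally from the above as an application of Deligne's criterion for the degeneration of the Leray spectral sequence in terms of the Lefschetz operator,'' which is precisely what you carry out using Theorem~\ref{stability} for strictness and relative Hard Lefschetz.
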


\begin{remark}
As we are working in the algebraic category and all mixed Hodge modules will be polarizable (cf. \cite{Saito-MHM}*{\S4.2}), I will implicitly assume that all objects are polarizable in what follows and ignore mentioning this condition.
\end{remark}

\subsection{Examples}\label{examples}
This section reviews the main examples that will be of interest in view of Saito's vanishing theorem. I will 
use freely the notation of the previous sections.

\begin{example}[{\bf The canonical bundle}]\label{basic}
If $X$ is smooth of dimension $n$ and $\V = \QQ_X$ is the constant variation of Hodge structure, we have that $P = \QQ_X[n]$,  $\Mmod = \omega_X$ with the natural right $\Dmod_X$-module structure, and $F_k \omega_X = \omega_X$ for $k \ge -n$, while $F_k \omega_X = 0$ for $k < -n$. The associated Hodge module is usually denoted $\QQ_X^H[n]$, and called the \emph{trivial 
Hodge module on $X$}. The de Rham complex of $\Mmod$ is 
\[
\DR_X(\omega_X) = \DR_X (\shO_X) [n] = \Bigl\lbrack
		\shO_X \to \Omega_X^1  \to \dotsb \to \Omega_X^n 
	\Bigr\rbrack [n].
\]
Note that 
$$\Gr_{-k}^F \DR (\omega_X, F) = \Omega^k_X [n-k]\,\,\,\,  
{\rm ~for~ all~}k.$$
Finally, in this example we have $p(\Mmod) = -n$ and $S (\Mmod) = \omega_X$.
\end{example}

\begin{example}[{\bf Direct images}]\label{direct_image}
Let $f: X \rightarrow Y$ be a projective morphism with $X$ smooth of dimension $n$ and $Y$ of dimension $m$,
and let $\V$ be a polarizable variation of $\QQ$-Hodge structure of weight $k$ on an open dense subset 
$U \subset X$, 
inducing a pure Hodge module $M$ of weight  $n + k$ on $X$ as in the previous section. If $(\Mmod, F)$ is the 
underlying filtered $\Dmod_X$-module, Theorem \ref{decomposition} gives a decomposition
$$f_+ (\Mmod, F) \simeq \bigoplus_i (\Mmod_i, F) [-i]$$ 
in the derived category of filtered $\Dmod_Y$-modules.
According to Theorem \ref{stability}, each $(\Mmod_i, F)$ underlies a 
 pure Hodge module $M_i = H^i f_*M$ on $Y$, of weight $n+k +i$. 
Furthermore, $f_+ (\Mmod, F)$ satisfies the strictness property, a particular case of which is the isomorphism
\begin{equation}\label{lowest_decomposition}
\R f_* S (\Mmod) \simeq F_{p(\Mmod)} (f_+ \Mmod) \simeq \bigoplus_i F_{p(\Mmod)} \Mmod_i [ -i]
\end{equation}
in the bounded derived category of coherent sheaves on $Y$. 

For instance, in the case when $\V = \QQ_X$ is the constant variation of Hodge structure, by Example \ref{basic}
$p (\Mmod) = -n$ and $S(\Mmod) = \omega_X$. This implies for all $i$ that
$$p(\Mmod_i) = -n \,\,\,\, {\rm and} \,\,\,\, F_{-n} \Mmod_i = R^i f_* \omega_X.$$
Note that for the corresponding left $\Dmod$-modules $\Nmod_i$ this means 
$$p(\Nmod_i) = m-n \,\,\,\, {\rm and} \,\,\,\, F_{m-n} \Nmod_i = R^i f_* \omega_{X/Y}.$$
Finally, formula (\ref{lowest_decomposition}) specializes to 
$$\R f_* \omega_X \simeq \bigoplus_i R^i f_* \omega_X [ -i],$$
which is the well-known Koll\'ar decomposition theorem \cite{Kollar2}. Moreover, we will see in Corollary \ref{lowest} and Theorem \ref{saito_GR} below that 
$R^i f_* S(\Mmod)$ satisfy other important properties known from \cite{Kollar1} in the case of canonical bundles,  like vanishing and 
torsion-freeness.
\end{example}

\begin{example}[{\bf Localization}]\label{localization}
Let $\Mmod$ be a right $\Dmod_X$-module and $D$ an effective divisor on a smooth variety $X$, given locally by an equation $f$. One can define a new $\Dmod_X$-module $\Mmod (*D)$ by localizing $\Mmod$ at $f$; in other words, globally we have
$$\Mmod(*D) =  j_* j^{-1} \Mmod,$$
where $j: U \hookrightarrow X$ is the inclusion of the complement $U = X \smallsetminus D$.
\end{example}

A standard characterization of those $\Dmod$-modules which do not change under localization will be 
useful later.

\begin{proposition}\label{equivalence_localization}
Let $X$ be a smooth complex variety, $D$ an effective divisor in $X$, and denote $j: U \hookrightarrow X$ the inclusion of the complement $U = X \smallsetminus D$. Then the restriction functor $j^*$ induces an equivalence between the following categories:

\noindent
(i) Regular holonomic  $\Dmod_X$-modules  $\Mmod$ such that the natural morphism 
$\Mmod \to \Mmod (*D)$ is an isomorphism.

\noindent
(ii) Regular holonomic $\Dmod_U$-modules.
\end{proposition}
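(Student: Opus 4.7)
My plan is to construct a quasi-inverse to $j^*$ and verify it on both sides. The natural candidate is $j_*$, the sheaf-theoretic direct image of $\Dmod_U$-modules, equipped with its canonical $\Dmod_X$-module structure coming from the fact that $j$ is an open immersion. Once I show that $j_*$ maps regular holonomic $\Dmod_U$-modules into category (i), the two compositions collapse formally: $j^* j_* \Nmod = j^{-1} j_* \Nmod = \Nmod$, because $j^{-1} j_* = \mathrm{id}$ for an open immersion, while $j_* j^* \Mmod = j_* j^{-1} \Mmod = \Mmod(*D) \simeq \Mmod$ by the defining property of category (i). The naturality of these isomorphisms in $\Nmod$ and $\Mmod$ is evident.

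The well-definedness of $j_*$ as a functor from category (ii) into category (i) amounts to two claims for $\Nmod$ regular holonomic on $U$: (a) $j_* \Nmod$ is a regular holonomic $\Dmod_X$-module; and (b) $(j_* \Nmod)(*D) \simeq j_* \Nmod$. Property (b) is tautological, since $(j_*\Nmod)(*D) = j_* j^{-1} j_* \Nmod = j_*\Nmod$ by the same identity $j^{-1} j_* = \mathrm{id}$ used above.

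Property (a) is the main obstacle: one must know that a regular holonomic $\Dmod$-module on the complement of a divisor extends, via the meromorphic direct image, to a regular holonomic $\Dmod$-module on the whole variety. I would handle this by the standard reduction: by Hironaka, one can assume $D$ is a simple normal crossings divisor, and then work locally on a polydisc with $D = \{z_1 \cdots z_r = 0\}$. There, one produces a coherent $\Dmod_X$-submodule of $j_* \Nmod$ that generates it over $\Dmod_X(*D)$, using the theory of Deligne--Malgrange lattices, or equivalently the Kashiwara--Malgrange $V$-filtration to be recalled in the next section, together with the regularity of the meromorphic connection along each component of $D$. Holonomicity is then transparent from the characteristic cycle of the extension (which picks up conormal contributions from the strata of $D$), and regularity is preserved by the theorem of Kashiwara--Kawai and Mebkhout on the stability of regular holonomicity under direct image along an open immersion with divisor complement; rather than reprove these classical results I would cite them. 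An equivalent route is to invoke the Riemann--Hilbert correspondence, together with the fact that $Rj_*$ takes constructible perverse sheaves on $U$ to complexes of constructible sheaves on $X$ whose zeroth perverse cohomology corresponds to $j_* \Nmod$.
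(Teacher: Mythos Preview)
Your argument is correct. You build the quasi-inverse $j_*$ explicitly, reduce everything to the one nontrivial fact that $j_*$ of a regular holonomic $\Dmod_U$-module is regular holonomic on $X$, and then either cite Kashiwara--Kawai/Mebkhout or pass through Riemann--Hilbert.

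The paper's proof is shorter and goes straight through Riemann--Hilbert from the start: it observes that under RH the condition $\Mmod \simeq \Mmod(*D)$ defining category (i) translates to $K \simeq j_* j^{-1} K$ for the associated perverse sheaf $K$, so the equivalence is immediate on the constructible side (where $j^{-1}$ and $j_*$ are evidently inverse between perverse sheaves on $U$ and perverse sheaves on $X$ satisfying $K \simeq j_* j^{-1} K$). Your route is more constructive and makes the quasi-inverse visible, which is useful elsewhere in the paper; the cost is that the preservation of regular holonomicity under $j_*$ is itself a substantial theorem, and your alternative ``equivalent route'' via RH is essentially the paper's argument. One small quibble: your sketch of the direct proof of (a) says you produce a coherent $\Dmod_X$-submodule generating $j_*\Nmod$ over $\Dmod_X(*D)$, but what is needed is $\Dmod_X$-coherence itself, which is exactly where regularity enters; your citation of the classical results covers this, but the phrasing slightly undersells the point.
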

\begin{proof}
A quick argument is to apply the Riemann-Hilbert correspondence for regular holonomic $\Dmod$-modules, see e.g. \cite[Theorem 7.2.5]{HTT}, as the condition defining the category in (i) says that for the perverse sheaf $K$ associated to $\Mmod$ one has $K \simeq j_*j^{-1} K$, i.e. $K$ can be recovered from its restriction to $U$.
\end{proof}

Assume now that $\Mmod$ underlies a mixed Hodge module $M$. By the formula above,  $\Mmod (*D)$ underlies the corresponding mixed Hodge module $j_* j^{-1} M$, and so continues to carry a natural Hodge filtration $F$. This is in general very complicated to compute; the case $\Mmod = \omega_X$, where $\omega_X (*D)$ is the sheaf of
meromorphic $n$-forms on $X$ that are holomorphic on $U$ and the corresponding Hodge module is $j_* \QQ^H_U \decal{n}$, is already very relevant. I will say a few words below, and more later.

We always have $F_{\ell} \omega_X(\ast D) \cdot F_k \Dmod_X \subseteq F_{k+\ell}
\omega_X(\ast D)$, since the filtration is compatible with the order of differential
operators, while by \cite{Saito-b}*{Proposition~0.9} we have
\begin{equation} \label{eq:FP}
	F_k \omega_X(\ast D) \subseteq  P_k \omega_X(\ast D) = 
\begin{cases}
\omega_X \bigl( (n+ k+1) D \bigr)  & \mbox{if} \,\,\,\, k \ge -n \\ 
0 &\mbox{if} \,\,\,\, k < -n,
\end{cases}
\end{equation}
i.e. the Hodge filtration is contained in the filtration by pole order. Furthermore, in \cite{Saito-b}*{Corollary 4.3} it is 
shown that if $D$ is smooth, then 
$$F_k \omega_X(\ast D) = P_k \omega_X(\ast D) \,\,\,\, {\rm ~for~ all~} k.$$
In general, a detailed analysis of the Hodge filtration on $\omega_X(*D)$ is given in the upcoming \cite{MP}.

We will see in Section \ref{particular_cases} that the first nontrivial step in the filtration is always related to the 
$V$-filtration along $D$, and that this provides a useful relationship with multiplier ideals. For this purpose it is more convenient to write things in terms of left $\Dmod$-modules. In fact, for the left 
$\Dmod$-module $\OX (*D)$ associated to $\omega_X (*D)$ (recall that $F_p \OX (*D) = F_{p -n} \omega_X (*D) \otimes \omega_X^{-1}$), one has the formula
$$S \big( \shO_X (*D) \big) = F_0 \OX(\ast D) = {\tilde V}^1 \OX \cdot \OX(D).$$
The $V$-filtration on $\Mmod$ and $\Mmod(*D)$ is discussed in \S\ref{V-filtration}, and provides further insight into the process of localization.

\subsection{Duality}\label{duality_section}
For later use, a few words are in order about duality for polarized Hodge modules, on a  smooth projective 
variety $X$ of dimension $n$. Further discussion 
and references can be found for instance in \cite{Schnell-MHM}*{\S13 and \S29}.

As mentioned earlier, a polarization on a 
pure Hodge module $M = \big((\Mmod, F); P\big)$ of weight $\ell$ involves an isomorphism $P(\ell) \simeq \D P$, where $\D P$ is the Verdier dual of the perverse sheaf $P$, compatible with the filtration $F$. This means that for the dual holonomic right 
$\Dmod_X$-module
$$\D \Mmod := \mathcal{E}xt^n (\Mmod, \omega_X \otimes_{\OX} \Dmod_X)$$
we have $\D \Mmod \simeq \Mmod$, but furthermore the natural induced filtration on $\D \Mmod$ 
should satisfy
$$(\D \Mmod, F) \simeq (\Mmod, F_{\bullet - \ell}).$$
It is necessary therefore for the filtration on $\D \Mmod$ to be strict. In fact, it is standard that this strictness property is equivalent to  
$\Gr_{\bullet}^F \Mmod$ being Cohen-Macaulay as a $\Gr_{\bullet}^F \Dmod_X$-module; this last statement holds by \cite{Saito-MHP}*{Lemma 5.1.13} for filtered $\Dmod$-modules underlying Hodge modules. A consequence is that one can define a dual Hodge module $\D M$, and in fact
$\D M \simeq M(\ell)$, with underlying filtered $\Dmod$-module $(\Mmod, F_{\bullet - \ell})$.

Moreover, by \cite{Saito-MHP}*{\S2.4.11} the filtered de Rham complex commutes with the duality functor. Given the discussion above, a useful consequence is:

\begin{lemma}\label{duality}
If $X$ is a smooth projective variety of dimension $n$ and  $(\Mmod, F)$ is the filtered $\Dmod$-module underlying a pure Hodge module $M \in {\rm HM} (X, \ell)$, then 
$$\R \Delta \Gr_k^F \DR_X (\Mmod) \simeq \Gr_{-k- \ell}^F \DR_X (\Mmod),$$
where $\R \Delta = \R \mathcal{H}om_{\OX} (\,\cdot \, , \omega_X) [n]$ is the Grothendieck duality functor.
\end{lemma}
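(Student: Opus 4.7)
The plan is to combine the two facts highlighted just before the lemma: (a) the polarization isomorphism $(\D \Mmod, F) \simeq (\Mmod, F_{\bullet - \ell})$ on the filtered $\Dmod$-module, and (b) Saito's compatibility \cite{Saito-MHP}*{\S2.4.11} of the filtered de Rham complex with duality. The whole argument takes place in the filtered derived category, and then one passes to the associated graded.

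First I would record (a) in the form that matters for the de Rham complex. Since the functor $\DR_X$ is exact and only involves tensoring the filtration with the locally free graded pieces $\bigwedge^\bullet T_X$, a shift of the filtration on $\Mmod$ by $\ell$ transports directly to a shift of the filtration on $\DR_X(\Mmod)$ by $\ell$. Hence (a) yields
\[
\Gr_k^F \DR_X(\D\Mmod) \;\simeq\; \Gr_{k-\ell}^F \DR_X(\Mmod)
\]
for every $k \in \ZZ$.

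Next I would apply (b): Saito's result gives an isomorphism in the filtered derived category
\[
\R\Delta\, \DR_X(\Mmod, F) \;\simeq\; \DR_X(\D\Mmod, F),
\]
where the filtration on the left is the natural dual filtration (legitimate precisely because the filtration on $\D\Mmod$ is strict, which in turn follows from the Cohen--Macaulay property of $\Gr_{\bullet}^F \Mmod$ over $\Gr_{\bullet}^F \Dmod_X$ recorded in \cite{Saito-MHP}*{Lemma 5.1.13} and used in the paragraph above). Taking the $k$-th associated graded on both sides, the dual filtration on $\R\Delta$ of a strict filtered complex satisfies $\Gr_k^F \R\Delta \simeq \R\Delta \Gr_{-k}^F$, so one obtains
\[
\R\Delta\, \Gr_{-k}^F \DR_X(\Mmod) \;\simeq\; \Gr_k^F \DR_X(\D\Mmod) \;\simeq\; \Gr_{k-\ell}^F \DR_X(\Mmod).
\]
Relabeling $k \mapsto -k$ gives the stated isomorphism $\R\Delta\, \Gr_k^F \DR_X(\Mmod) \simeq \Gr_{-k-\ell}^F \DR_X(\Mmod)$.

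The only delicate point is bookkeeping of signs and indices: one must verify that Saito's ``commutation'' in \S2.4.11 is compatible with the indexing convention used throughout the paper (where $F_k \DR_X(\Mmod)$ starts from $F_{k-n}\Mmod$ in degree $-n$), and that the dual of a strict filtered complex genuinely flips the filtration index. Once this is in hand the rest is formal, so the main obstacle is really just to extract the precise indexing from \cite{Saito-MHP}*{\S2.4.11} rather than to produce any new mathematics.
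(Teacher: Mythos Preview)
Your proposal is correct and follows exactly the approach the paper indicates: the lemma is stated there simply as a consequence of the polarization isomorphism $(\D\Mmod,F)\simeq(\Mmod,F_{\bullet-\ell})$ together with Saito's commutation of $\DR_X$ with duality from \cite{Saito-MHP}*{\S2.4.11}, with no further argument spelled out. You have supplied the index bookkeeping that the paper leaves implicit, and it checks out.
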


\subsection{The $V$-filtration.}\label{V-filtration}
In this section I will recall some key definitions and results regarding the $V$-filtration with respect to a hypersurface, and its interaction with the Hodge filtration. I am mostly following \cite{Saito-MHP}*{\S3}, which is a complete reference for all the definitions and results recalled here.

Let $X$ be a complex manifold or smooth complex variety of dimension $n$, and let $X_0$ be an smooth divisor on $X$ defined locally by an equation $t$. We first consider a rational filtration on $\Dmod_X$, given by
$$V_{\alpha} \Dmod_X = \{ P \in \Dmod_X~|~ P \cdot \I_{X_0}^j \subseteq \I_{X_0}^{j - [\alpha]}\} \,\,\,\,{\rm for~} 
\alpha \in \QQ,$$
where $\I_{X_0}$ is the ideal of $X_0$ in $\shO_X$, with the convention that $\I_{X_0}^j  = \shO_X$ for $j \le 0$.

\begin{definition}[{\bf $V$-filtration}]
Let $\Mmod$ be a coherent right $\Dmod_X$-module. A \emph{rational $V$-filtration} (a slight refinement of the \emph{Kashiwara-Malgrange filtration}) of $\Mmod$ 
along $X_0$ is an increasing filtration $V_{\alpha} \Mmod$ with $\alpha \in \QQ$ satisfying the 
following properties:

\noindent
$\bullet$\,\,\,\,The filtration is exhaustive, i.e. $\bigcup_{\alpha} V_{\alpha} \Mmod = \Mmod$, and each $V_{\alpha} \Mmod$
is a coherent $V_0 \Dmod_X$-submodule of $\Mmod$.

\noindent
$\bullet$\,\,\,\,$V_\alpha \Mmod \cdot V_i \Dmod_X \subseteq V_{\alpha + i } \Mmod$ for every $\alpha \in \QQ$ and 
$i \in \ZZ$; furthermore
$$V_{\alpha} \Mmod \cdot t = V_{\alpha-1} \Mmod \,\,\,\, {\rm for ~} \alpha < 0.$$

\noindent
$\bullet$\,\,\,\,The action of $t\partial_t -\alpha$ on $\Gr_{\alpha}^V \Mmod$ is nilpotent for each $\alpha$, where 
$\partial_t$ is a vector field such that $[\partial_t, t] =1$. (One defines $\Gr_{\alpha}^V \Mmod$ as $V_{\alpha} \Mmod / V_{< \alpha} \Mmod$, 
where $V_{< \alpha} \Mmod = \cup_{\beta < \alpha} V_{\beta} \Mmod$.)
\end{definition}

It is known that if a $V$-filtration exists, then it is unique. In addition, $\Dmod$-modules underlying mixed Hodge modules also come by definition with a Hodge filtration, and it is important to compare the two. Note first that on each 
$ \Gr_{\alpha}^V \Mmod$ one considers the filtration induced by that on $\Mmod$, i.e.
$$F_p \Gr_{\alpha}^V \Mmod := \frac{F_p \Mmod \cap V_{\alpha} \Mmod}{F_p \Mmod \cap V_{< \alpha} \Mmod}.$$

\begin{definition}[{\bf Regular and quasi-unipotent}]\label{quasi-unipotent}
In the situation above, assume that $\Mmod$ is endowed with a good filtration $F$.
We say that $(\Mmod, F)$ is \emph{quasi-unipotent (or strictly specializable) along $X_0$}
if $\Mmod$ admits a rational $V$-filtration along $X_0$ and the following conditions are satisfied:

\noindent
$\bullet$~ $(F_p V_{\alpha} \Mmod)\cdot t = F_p V_{\alpha - 1} \Mmod$ \,\, for  \,\, $\alpha < 0$.

\noindent
$\bullet$~ $(F_p \Gr_{\alpha}^V \Mmod)\cdot \partial_t = F_{p+1} \Gr_{\alpha+1}^V \Mmod$  \,\, for  \,\, 
$\alpha > -1$.

\noindent
One says that $(\Mmod, F)$ is \emph{regular and quasi-unipotent along $X_0$} if in addition the filtration 
$F_{\bullet} \Gr_{\alpha}^V \Mmod$ is a good filtration for $-1 \le \alpha \le 0$.
\end{definition}

\noindent
Let now $f: X \rightarrow \CC$ be a holomorphic function, and denote by 
$$i = i _{\Gamma_f}: X \hookrightarrow X\times \CC =  Y$$
the embedding of $X$ as the graph of $f$. Denote by $t$ the coordinate on $\CC$, so that in the notation above we
have $X_0 = X \times \{0 \} = t^{-1} (0)$.
If $\Mmod$ is a coherent right $\Dmod_X$-module, denote $(\tilde \Mmod, F) = i_* (\Mmod, F)$.
One says that $(\Mmod, F)$ is \emph{strictly specializable along $f$} if $(\tilde \Mmod, F)$ is so along $X_0$, and the 
same for \emph{regular and quasi-unipotent along $f$}. One important feature of mixed Hodge module theory is that all 
$\Dmod$-modules underlying Hodge modules are required to satisfy this last property with respect to any holomorphic function. 

The following technical result on the behavior of regular and quasi-unipotent filtered $\Dmod$-modules is a key step in extending Kashiwara's theorem on closed embeddings to the setting of Hodge $\Dmod$-modules. This will be very useful when stating Saito's vanishing theorem on singular varieties in Section \ref{main}. 

\begin{lemma}[\cite{Saito-MHM}*{Lemma 3.2.6}]\label{divisor_support}
Let $f: X\rightarrow \CC$ be a holomorphic function, and $(\Mmod, F)$ a filtered coherent $\Dmod_X$-module. Assume that ${\rm Supp}(\Mmod) \subseteq
f^{-1}(0)$. Then  the following are equivalent:

\noindent
(i) $(\Mmod, F)$ is regular and quasi-unipotent along $f$.

\noindent
(ii) $\Gr^F_p \Mmod \cdot f =0$  for all $p$.

\noindent
(iii) $(\tilde M, F) \simeq j_* (\Mmod, F)$, where $j : X \times \{0\} \hookrightarrow X \times \CC$.

\end{lemma}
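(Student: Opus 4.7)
The plan is to prove the cycle $(iii) \Rightarrow (i) \Rightarrow (ii) \Rightarrow (iii)$ by explicit local computation. Work in local coordinates on $Y = X \times \CC$ with $t$ the coordinate on $\CC$, so that $X_0 = \{t = 0\}$; write $i \colon X \hookrightarrow Y$ for the graph embedding of $f$ and $j \colon X \hookrightarrow Y$ for the zero-section embedding, and set $s = t - f$. Since $\Supp(\Mmod) \subseteq f^{-1}(0)$, both $\tilde{\Mmod} = i_+\Mmod$ and $j_+\Mmod$ are supported on $\Supp(\Mmod) \times \{0\}$, and a direct calculation (solving $x \cdot t = 0$ in $i_+\Mmod = \bigoplus_k \Mmod \partial_s^k$) exhibits a canonical $\Dmod_Y$-linear isomorphism
\[
\Phi \colon j_+\Mmod \longrightarrow \tilde{\Mmod}, \qquad m \,\partial_t^k \longmapsto \sum_{l \ge 0} \frac{(-1)^l}{l!} (m f^l) \,\partial_s^{l+k},
\]
which is locally finite because $m f^N = 0$ locally for $N$ large. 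By uniqueness of the $V$-filtration and $\Dmod_Y$-equivariance, $V_\alpha \tilde{\Mmod} = \Phi(V_\alpha j_+\Mmod)$ where $V_\alpha j_+\Mmod = \bigoplus_{k \le \lfloor \alpha \rfloor} \Mmod \,\partial_t^k$; in particular $V_0 \tilde{\Mmod} = \Phi(\Mmod) = \ker\bigl(t \colon \tilde{\Mmod} \to \tilde{\Mmod}\bigr)$.

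The implication $(iii) \Rightarrow (i)$ is a direct verification on the product-embedding pushforward $j_+(\Mmod, F)$ with standard filtration $F_p j_+\Mmod = \bigoplus_k F_{p-k-1}\Mmod\, \partial_t^k$: the first bullet of Definition \ref{quasi-unipotent} is vacuous, the second reduces under $\Gr^V_k j_+\Mmod \cong \Mmod$ to the tautology that $\partial_t$ gives an isomorphism $F_q \Mmod\, \partial_t^k \xrightarrow{\sim} F_q \Mmod \,\partial_t^{k+1}$, and $F_\bullet \Gr^V_0$ is the given good filtration on $\Mmod$. The implication $(ii) \Rightarrow (iii)$ follows because iterating (ii) gives $F_q \Mmod \cdot f^l \subseteq F_{q-l}\Mmod$, and inspection of the formulas for $\Phi$ and $\Phi^{-1}(m\, \partial_s^k) = \sum_r \tfrac{1}{r!}(m f^r)\, \partial_t^{r+k}$ then shows that both preserve the Hodge filtration term by term.

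The main content is $(i) \Rightarrow (ii)$. Using $F_p \tilde{\Mmod} = \bigoplus_j F_{p-j-1}\Mmod \,\partial_s^j$, the condition $\Phi(m) \in F_p \tilde{\Mmod}$ reads $m f^l \in F_{p-l-1}\Mmod$ for every $l \ge 0$, so
\[
\Phi^{-1}(F_p V_0 \tilde{\Mmod}) = B_p := \{\,m \in \Mmod \mid m f^l \in F_{p-l-1}\Mmod \text{ for all } l \ge 0\,\} \subseteq F_{p-1}\Mmod.
\]
On the other hand, representing a class in $\Gr^V_1 \tilde{\Mmod}$ by $\Phi(m_0) + \Phi(m_1)\partial_s$ and taking $m_0 = 0$ yields $F_{p+1}\Gr^V_1 \tilde{\Mmod} \supseteq F_{p-1}\Mmod$. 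Under the Kashiwara identifications $\Gr^V_0 \cong \Mmod \cong \Gr^V_1$ the operator $\partial_t$ induces the identity, so the quasi-unipotent equality $(F_p \Gr^V_0)\cdot \partial_t = F_{p+1}\Gr^V_1$ at $\alpha = 0$ forces $B_p = F_{p+1}\Gr^V_1 \supseteq F_{p-1}\Mmod$; combined with $B_p \subseteq F_{p-1}\Mmod$, this gives $B_p = F_{p-1}\Mmod$. The $l = 1$ clause in the definition of $B_p$ then reads $F_{p-1}\Mmod \cdot f \subseteq F_{p-2}\Mmod$, which is (ii).

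The main obstacle is the careful computation of $F_p V_0 \tilde{\Mmod}$: the Kashiwara-level $V_0$ is not the zero level of the graph pushforward but the ``gauged'' subspace $\Phi(\Mmod)$, so the interaction of the Hodge filtration with $V_0$ must be unwound through the explicit form of $\Phi$ in order to read off the desired condition on multiplication by $f$. Once this computation is in place, the quasi-unipotent equality at the single point $\alpha = 0$ turns out to be the entire content of the implication $(i) \Rightarrow (ii)$.
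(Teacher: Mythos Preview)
The paper does not give its own proof of this lemma; it is simply cited from \cite{Saito-MHM}, so there is nothing to compare your argument against. That said, your argument for $(i)\Rightarrow(ii)$ has a genuine gap.

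You correctly compute that, under the identification $\Gr^V_0\tilde{\Mmod}\cong\Mmod$ via $\Phi$, one has
\[
F_p\Gr^V_0\tilde{\Mmod}=B_p:=\{\,m\in\Mmod\mid mf^l\in F_{p-l-1}\Mmod\text{ for all }l\ge0\,\}\subseteq F_{p-1}\Mmod,
\]
and that the quasi-unipotent equality at $\alpha=0$ forces $F_{p+1}\Gr^V_1\tilde{\Mmod}=B_p$. The problem is your claimed inclusion $F_{p+1}\Gr^V_1\tilde{\Mmod}\supseteq F_{p-1}\Mmod$. You justify it by ``taking $m_0=0$'', i.e.\ by considering $\Phi(m_1)\partial_s$ for $m_1\in F_{p-1}\Mmod$. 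But
\[
\Phi(m_1)\partial_s=\sum_{l\ge0}\frac{(-1)^l}{l!}(m_1f^l)\,\partial_s^{\,l+1},
\]
and this lies in $F_{p+1}\tilde{\Mmod}=\bigoplus_j F_{p-j}\Mmod\,\partial_s^j$ if and only if $m_1f^l\in F_{p-l-1}\Mmod$ for every $l\ge0$, that is, if and only if $m_1\in B_p$. So with $m_0=0$ you only obtain $F_{p+1}\Gr^V_1\supseteq B_p$, and feeding this into the quasi-unipotent equality $F_{p+1}\Gr^V_1=B_p$ gives a tautology rather than the desired $B_p\supseteq F_{p-1}\Mmod$.

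To establish the inclusion $F_{p+1}\Gr^V_1\supseteq F_{p-1}\Mmod$ one must allow a nonzero correction term $m_0$: for $m_1\in F_{p-1}\Mmod$ one needs $a_0\in F_p\Mmod$ such that $\frac{(-1)^j}{j!}a_0f^j+\frac{(-1)^{j-1}}{(j-1)!}m_1f^{j-1}\in F_{p-j}\Mmod$ for all $j\ge1$. This is a nontrivial system of constraints, and you have not shown it can always be solved (nor is it clear this is the intended route in Saito's proof, which you should consult). The implications $(iii)\Rightarrow(i)$ and $(ii)\Rightarrow(iii)$ in your argument look fine.
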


We will also need a transversality notion for a  filtered $\Dmod$-module with respect to a morphism (or a 
submanifold) introduced in \cite{Saito-MHP}*{3.5.1}, under which filtered  inverse images become particularly simple.

\begin{definition}[{\bf Non-characteristic morphism}]\label{transverse}
Let $f: X \rightarrow Y$ be a morphism of complex manifolds, and let $(\Mmod, F)$ be a filtered coherent $\Dmod_Y$-module.
One says that $f$ is \emph{non-characteristic} for $(\Mmod, F)$ if the following two conditions are satisfied:

\noindent
$\bullet$\,\,\,\,$H^i (f^{-1} \Gr^F \Mmod \overset{{\bf L}}{\otimes}_{f^{-1} \shO_Y} \shO_X) = 0$ for $i \neq 0$.\footnote{In terms of the individual graded pieces, which are coherent sheaves of $Y$, this simply says that  $L^i f^* \Gr_k^F \Mmod = 0$ for all $i \neq 0$ 
and all $k$.}

\noindent
$\bullet$\,\,\,\,The natural morphism 
$$df^*: p_2^{-1} \big({\rm Char} (\Mmod) \big) \rightarrow T^*X$$ 
is finite, where $p_2: X \times_Y T^*Y \rightarrow T^*Y$ is the second projection and 
$$df^*: X\times_Y T^*Y \rightarrow T^*X, \,\,\,\,(x, \omega) \mapsto df^* \omega \,\,\,\,{\rm for~all~} x\in X,\,\, \omega \in T^* Y.$$
\noindent
If $f$ is a closed immersion, we say that $X$ is non-characteristic for $(\Mmod, F)$ if $f$ is so.
\end{definition}

If $f$ is non-characteristic for $(\Mmod, F)$ and $d = \dim X - \dim Y$, then as in \cite{Saito-MHP}*{\S3.5} one has the filtered pullback 
$\mu^*(\Mmod, F) = (\tilde \Mmod, F) [-d]$ given by the formula 
$$\tilde \Mmod = \mu^{-1} \Mmod \otimes_{\mu^{-1} \shO_X} \omega_{Y/X} \,\,\,\,{\rm and} \,\,\,\, 
F_p \tilde \Mmod = \mu^{-1} F_{p+d} \Mmod \otimes_{\mu^{-1} \shO_X} \omega_{Y/X}.$$
In other words we can define the inverse image to be, up to shift, the naive filtration on the naive pullback, and this again gives a holonomic $\Dmod_X$-module if $\Mmod$ is so; see \cite{Saito-MHP}*{Lemma 3.5.5}. If $(\Mmod, F)$ underlies a pure Hodge module $M$ of weight $m$ on $Y$, $\mu^*(\Mmod, F)$ then underlies a pure Hodge module $\mu^* M$ of weight $m + d$ on $X$.\footnote{This is a non-trivial result, using the fact that pure Hodge modules with strict support come from generic variations of Hodge structure; see e.g. \cite{Schnell-MHM}*{\S30} for an explanation.}

\begin{example}\label{nonchar_example}
The most basic examples of this notion are:

\noindent
(i) If $f$ is smooth, then it is non-characteristic for any $(\Mmod, F)$, as $df^*$ is injective and $f$ is flat. 

\noindent
(ii) If $(\Mmod, F)$ underlies a variation of Hodge structure, any $f$ is non-characteristic for it, as 
${\rm Char}(\Mmod)$ is the zero section, while each $\Gr_k^F \Mmod$ is locally free.

As a combination of the two, if $f$ is smooth outside of the locus where $(\Mmod, F)$ underlies a variation of Hodge structure, then $f$ is non-characteristic for $(\Mmod, F)$.
\end{example}

The following lemmas are important in what follows; they show that under the non-characteristicity assumption
one can perform concrete calculations with the $V$-filtration.

\begin{lemma}[\cite{Saito-MHP}*{Lemme 3.5.6}]\label{noncharacteristic}
Let $i: D \hookrightarrow X$ be an inclusion of a smooth hypersurface in a smooth complex variety. Let $(\Mmod, F)$ be a filtered coherent 
right $\Dmod_X$-module for which $D$ is non-characteristic. Then

\noindent
(1) $(\Mmod, F)$ is regular and quasi-unipotent along $D$.

\noindent
(2) The $V$-filtration on $M$ is given by 
$$V_{\alpha} \Mmod = \Mmod \cdot \shO_X (- i D) \,\,\,\,{\rm for~} -i-1 \le \alpha < -i, \,\,i \ge 0 \,\,\,\,{\rm and~} V_{\alpha} \Mmod = \Mmod \,\,
{\rm for~}\alpha \ge 0.$$
\end{lemma}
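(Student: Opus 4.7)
The plan is to write down an explicit candidate for the $V$-filtration using the formula in (2), verify that it satisfies the axioms of a rational $V$-filtration, and then invoke uniqueness to conclude. The regular and quasi-unipotent property with respect to $F$ will then be a separate verification that pivots on a single consequence of the non-characteristic hypothesis: that a local equation $t$ of $D$ acts injectively on every graded piece $\Gr_p^F \Mmod$.

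Work locally with $t \in \shO_X$ defining $D$, and set
$$V_\alpha \Mmod := \Mmod \cdot \shO_X(-iD) = \Mmod \cdot t^i \quad \text{for } -i-1 \le \alpha < -i,\, i \ge 0,$$
and $V_\alpha \Mmod = \Mmod$ for $\alpha \ge 0$. Exhaustiveness and $V_\alpha \Mmod \cdot t = V_{\alpha -1} \Mmod$ for $\alpha < 0$ are immediate. Since $V_0 \Dmod_X$ is the subring preserving the ideal filtration $(\I_D^j)_j$, each $\Mmod \cdot t^i$ is stable under right multiplication by $V_0 \Dmod_X$ and is finitely generated over $V_0 \Dmod_X$ because $\Mmod$ is coherent over $\Dmod_X$; more generally $V_j \Dmod_X \cdot t^i \subseteq t^{i-j} V_0 \Dmod_X$, which gives the required $V_\alpha \Mmod \cdot V_j \Dmod_X \subseteq V_{\alpha + j} \Mmod$. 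The graded pieces $\Gr_\alpha^V \Mmod$ vanish except at negative integers $\alpha = -j$ ($j \ge 1$), where $\Gr_{-j}^V \Mmod = \Mmod t^{j-1}/\Mmod t^j$; a direct computation using $t^j \partial_t = \partial_t t^j - j t^{j-1}$ shows
$$(m t^{j-1}) \cdot (t\partial_t + j) = (m \cdot \partial_t)\, t^j \in \Mmod t^j,$$
so $t\partial_t - \alpha$ acts as zero (in particular nilpotently) on $\Gr_\alpha^V \Mmod$. This verifies the axioms, so by uniqueness the formula gives the $V$-filtration and proves (2).

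For (1) I need to check the two strict compatibilities of Definition \ref{quasi-unipotent} for the induced filtration $F_p V_\alpha \Mmod = F_p \Mmod \cap V_\alpha \Mmod$. The second condition is vacuous because $\Gr_\alpha^V \Mmod = 0$ for $\alpha > -1$. The first condition reduces to
$$(F_p \Mmod \cap \Mmod\, t^i) \cdot t = F_p \Mmod \cap \Mmod\, t^{i+1}, \quad i \ge 0,$$
and this is the only non-formal step. The non-characteristic hypothesis says $L^k i^* \Gr_k^F \Mmod = 0$ for $k \ne 0$; applied to the Koszul resolution of $\shO_D = \shO_X/(t)$ it yields that multiplication by $t$ is injective on every $\Gr_p^F \Mmod$. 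From this one gets $F_p \Mmod \cap \Mmod t = (F_p \Mmod) \cdot t$ by a short induction on $p$ (if $n = m t \in F_p \Mmod$ with $m \in F_q \Mmod \smallsetminus F_{q-1}\Mmod$ and $q > p$, the class of $m$ in $\Gr_q^F \Mmod$ would yield a non-zero class for $n$ there, contradicting $n \in F_p \Mmod$), and iterating gives the stated equality. Finally, for regularity one needs $F_\bullet \Gr_\alpha^V \Mmod$ to be a good filtration for $-1 \le \alpha \le 0$; the only non-trivial piece is $\Gr_{-1}^V \Mmod$, on which the same lemma gives $F_p \Gr_{-1}^V \Mmod = F_p \Mmod \otimes_{\shO_X} \shO_D$, clearly good since $F_\bullet \Mmod$ is.

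The main obstacle is precisely the implication \emph{non-characteristic} $\Rightarrow$ \emph{$t$ is regular on $\Gr_\bullet^F \Mmod$}, and the induction packaging this into strict compatibility $(F_p \Mmod \cap \Mmod t^i) \cdot t = F_p \Mmod \cap \Mmod t^{i+1}$. Everything else is formal bookkeeping with the definitions of the $V$-filtration and the $V$-order on $\Dmod_X$.
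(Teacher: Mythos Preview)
The paper does not give its own proof of this lemma; it merely cites \cite{Saito-MHP}*{Lemme 3.5.6}. So there is no comparison to make, only a correctness check.

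Your overall strategy is the right one, and most of the verification is fine. There is, however, a genuine gap: the assertion that each $\Mmod\cdot t^i$ is \emph{coherent over $V_0\Dmod_X$} ``because $\Mmod$ is coherent over $\Dmod_X$'' is false in general, and this is precisely where the \emph{second} non-characteristic condition (the finiteness of $p_2^{-1}(\mathrm{Char}(\Mmod))\to T^*D$) enters. You use only the Tor-vanishing condition, and that is not enough.

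Concretely, take $X=\CC$ with coordinate $t$, $D=\{0\}$, and $\Mmod=\Dmod_X$ with its order filtration. Then $\Gr_\bullet^F\Mmod=\shO_X[\xi]$ and $t$ is a nonzerodivisor on it, so the Tor condition holds; but $\mathrm{Char}(\Mmod)=T^*X$ contains the conormal of $D$, so the characteristic-variety condition fails. And indeed $\Dmod_X$ is \emph{not} coherent over $V_0\Dmod_X$: the elements $\partial_t^k$ for $k\ge 0$ cannot be generated from finitely many, since right multiplication by $V_0\Dmod_X$ never raises the $V$-order. Thus your candidate filtration $V_\alpha\Mmod=\Mmod\cdot t^i$ would violate the coherence axiom in the definition of the $V$-filtration.

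The fix is to invoke the characteristic-variety condition to prove coherence. One clean way: pass to associated gradeds. One has $\Gr_\bullet^F V_0\Dmod_X\simeq \shO_X[\xi_1,\dots,\xi_{n-1},\,t\xi_n]$ in local coordinates with $t$ defining $D$. Coherence of $\Mmod$ over $V_0\Dmod_X$ then follows (by the usual filtered-to-graded argument) from coherence of $\Gr_\bullet^F\Mmod$ over this subring, which is exactly the statement that the support $\mathrm{Char}(\Mmod)$ maps finitely under $T^*X\to \mathrm{Spec}\,\shO_X[\xi',t\xi_n]$; over $\{t=0\}$ this map collapses the conormal line, so finiteness is equivalent to $\mathrm{Char}(\Mmod)|_D\to T^*D$ being finite. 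Once coherence is in place, the rest of your argument (uniqueness of the $V$-filtration, $t$-regularity on $\Gr_\bullet^F\Mmod$ from the Tor condition, the strictness $(F_p\Mmod)\cdot t^i=F_p\Mmod\cap\Mmod t^i$, and vacuity of the $\partial_t$-condition) goes through as written.
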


\begin{lemma}[\cite{Saito-MHP}*{Lemme 3.5.7}]\label{local_nonchar}
With the notation of Lemma \ref{noncharacteristic}, we have that

\noindent
(1) The $V$-filtration on $\Mmod(*D)$ satisfies
$$V_{\alpha} \Mmod (*D) = \Mmod \cdot \shO_X (- i D) \,\,\,\,{\rm for~} -i-1 \le \alpha < -i.$$ 

\noindent
(2) There is a filtration $F$ on $\Mmod (*D)$ which makes it a filtered coherent right $\Dmod_X$-module,
such that there is an exact sequence of filtered $\Dmod$-modules
$$0\longrightarrow (\Mmod, F) \longrightarrow (\Mmod (*D), F) \longrightarrow i_* i^! (\Mmod, F)[1] \longrightarrow 0.$$
In addition, $(\Mmod (*D), F)$ is regular and quasi-unipotent along $D$.
\end{lemma}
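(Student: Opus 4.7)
For part (1), the strategy is to \emph{define} a filtration on $\Mmod(*D)$ via the displayed formula, now with $i$ ranging over all of $\ZZ$ (so that for $i < 0$ one obtains the submodules $\Mmod \cdot \shO_X(|i|D) \subseteq \Mmod(*D)$ of sections with pole of order at most $|i|$ along $D$), and then to invoke uniqueness of the rational $V$-filtration. Exhaustiveness is immediate from $\Mmod(*D) = \bigcup_{i \ge 0} \Mmod \cdot \shO_X(iD)$; coherence of each piece as a $V_0 \Dmod_X$-module, and the inclusion $V_\alpha \Mmod(*D) \cdot V_j \Dmod_X \subseteq V_{\alpha+j} \Mmod(*D)$, both reduce to the corresponding properties of $V_\bullet \Mmod$ from \lemmaref{noncharacteristic} together with the fact that operators in $V_0 \Dmod_X$ preserve vanishing order along $D$; and $V_\alpha \Mmod(*D) \cdot t = V_{\alpha-1} \Mmod(*D)$ for $\alpha < 0$ is the tautology $\bigl(\Mmod \cdot \shO_X(-iD)\bigr) \cdot t = \Mmod \cdot \shO_X(-(i+1)D)$ read in local coordinates. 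The only substantive point is the nilpotency of $t\partial_t - \alpha$ on each $\Gr_\alpha^V \Mmod(*D)$: jumps occur only at integers $\alpha = -i$, and a direct computation on $\Gr_{-i}^V \Mmod(*D) = \Mmod \cdot \shO_X(-iD)/\Mmod \cdot \shO_X(-(i+1)D)$ shows that the non-characteristic hypothesis forces $t\partial_t + i$ to act as zero (and hence nilpotently), reflecting the absence of monodromy in this setting.

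For part (2), the filtration $F$ on $\Mmod(*D)$ is constructed from $F_\bullet \Mmod$ by turning the identities of Definition~\ref{quasi-unipotent} into defining relations. One sets $F_p V_{-1} \Mmod(*D) = F_p \Mmod$ (valid since $V_{-1} \Mmod(*D) = \Mmod$ by part (1)), propagates to $\alpha < -1$ by the forced equality $F_p V_{\alpha-1} \Mmod(*D) = (F_p V_\alpha \Mmod(*D)) \cdot t$, and propagates to the integer jumps $\alpha \ge 0$ inductively via
\[
F_p V_\alpha \Mmod(*D) = F_p V_{\alpha-1} \Mmod(*D) + \bigl(F_{p-1} V_{\alpha-1} \Mmod(*D)\bigr) \cdot \partial_t.
\]
By construction the filtration is good, satisfies both strict-specializability identities of Definition~\ref{quasi-unipotent}, and makes $(\Mmod(*D), F)$ into a filtered right $\Dmod_X$-module whose induced filtrations on $\Gr_\alpha^V \Mmod(*D)$ in the range $-1 \le \alpha \le 0$ are good, so that $(\Mmod(*D), F)$ is regular and quasi-unipotent along $D$. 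Since $F_p V_{-1} \Mmod(*D) = F_p \Mmod$ by construction, the inclusion $(\Mmod, F) \hookrightarrow (\Mmod(*D), F)$ is a morphism of filtered $\Dmod$-modules.

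Finally, the cokernel $\Mmod(*D)/\Mmod$ is supported on $D$, so by Kashiwara's equivalence it equals $i_* \Nmod$ for a coherent $\Dmod_D$-module $\Nmod$; under the non-characteristic hypothesis a standard local cohomology computation identifies $\Nmod$ with $i^! \Mmod \decal{1}$, and the quotient filtration inherited from the $F$ of the previous paragraph agrees, graded piece by graded piece along the $V$-filtration, with the natural filtration on $i_* i^!(\Mmod, F)\decal{1}$, yielding strict exactness of the sequence in (2). The main technical obstacle is part (2): arranging the filtration on $\Mmod(*D)$ so that both the $\Dmod_X$-compatibility and the two strict-specializability identities hold simultaneously, while still matching $F_\bullet \Mmod$ on the $V_{-1}$ piece. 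The non-characteristic hypothesis is strong enough that the explicit propagation rules above work without modification, but verifying this requires careful local computation with the generators.
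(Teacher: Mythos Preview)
The paper does not prove this lemma; it is stated as a citation of \cite{Saito-MHP}*{Lemme 3.5.7} and used as a black box in the proof of Theorem~\ref{saito_vanishing}. There is therefore no argument in the paper to compare your proposal against.

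That said, your outline is essentially the standard route (and the one Saito takes): verify the $V$-filtration axioms directly for the displayed filtration on $\Mmod(*D)$, then manufacture the Hodge filtration on $\Mmod(*D)$ by forcing the two identities in Definition~\ref{quasi-unipotent} to hold, starting from $F_p V_{-1}\Mmod(*D) = F_p\Mmod$ and propagating by $t$ downward and by $\partial_t$ upward. Your indexing is correct: the jumps of $V_\bullet\Mmod(*D)$ occur at the integers, and a short right-$\Dmod$-module computation on $\Gr_{-i}^V\Mmod(*D) = \Mmod\cdot t^{i-1}/\Mmod\cdot t^i$ shows $t\partial_t$ acts by $-i$, so $t\partial_t + i$ is indeed zero there (you do not strictly need the non-characteristic hypothesis for that step---it enters earlier, to guarantee that $t$ is a non-zero-divisor on $\Mmod$ via torsion-freeness of the $\Gr_k^F\Mmod$, so that $\Mmod \hookrightarrow \Mmod(*D)$ and the filtration is exhaustive). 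The identification of the cokernel with $i_*i^!(\Mmod,F)[1]$ is likewise the expected one, and strictness of the short exact sequence follows because your $F$ on $\Mmod(*D)$ restricts to $F$ on $\Mmod$ by construction. In short: your sketch is sound, and there is nothing in the present paper to add to it.
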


It will also be crucial, under suitable hypotheses, to be able to recover the Hodge filtration from its restriction over 
the complement of a hypersurface. This is one of the key points of the interaction between the Hodge filtration and 
the $V$-filtration in the case of filtered $\Dmod$-modules underlying Hodge modules.

\begin{lemma}[\cite{Saito-MHP}*{Proposition 3.1.8}]\label{extension_filtration}
With the notation of Lemma \ref{noncharacteristic}, and $U = X \smallsetminus D$, let $\Mmod^\prime$ be the smallest sub-object of $\Mmod$ such that 
$\Mmod_{|U} = \Mmod^\prime_{|U}$. Then:

\noindent
(1) $\Mmod^\prime = V_\alpha \Mmod \cdot \Dmod_X$ for $\alpha < 0$.

\noindent
(2) $\Mmod / \Mmod^\prime \simeq i_* {\rm Coker}~\big( {\rm can} = \partial_t : \Gr_{-1}^V \Mmod \rightarrow \Gr_0^V\Mmod\big)$.

\noindent
In particular, $\Mmod = V_\alpha \Mmod \cdot \Dmod_X$ for $\alpha < 0$ if ${\rm can}$ is surjective.
\end{lemma}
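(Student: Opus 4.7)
Define $\Mmod_\alpha := V_\alpha\Mmod \cdot \Dmod_X$ for $\alpha < 0$. Since the $V$-filtration along $D$ is trivial on $U = X \smallsetminus D$, we have $(V_\alpha\Mmod)_{|U} = \Mmod_{|U}$, and taking the $\Dmod_X$-saturation preserves this, so $\Mmod_\alpha$ agrees with $\Mmod$ on $U$. Conversely, for any sub-$\Dmod_X$-module $\Mmod'' \subseteq \Mmod$ with $\Mmod''_{|U} = \Mmod_{|U}$, the quotient $\Mmod/\Mmod''$ is supported on $D$, so by Kashiwara's equivalence its intrinsic $V$-filtration satisfies $V_\alpha(\Mmod/\Mmod'') = 0$ for every $\alpha < 0$. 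By uniqueness of the $V$-filtration, this is the filtration induced from $\Mmod$, so the image of $V_\alpha\Mmod$ in $\Mmod/\Mmod''$ vanishes, i.e.\ $V_\alpha\Mmod \subseteq \Mmod''$; hence $\Mmod_\alpha = V_\alpha\Mmod \cdot \Dmod_X \subseteq \Mmod''$. Minimality then forces $\Mmod' = \Mmod_\alpha$ for every $\alpha < 0$.

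\textbf{Part (2).} Since $\Mmod/\Mmod'$ is supported on $D$, Kashiwara's equivalence identifies it with $i_*\Nmod$ for a unique $\Dmod_D$-module $\Nmod$, and $\Nmod \simeq \Gr_0^V(\Mmod/\Mmod')$. By uniqueness of the induced $V$-filtration on sub- and quotient modules, the short exact sequence $0 \to \Mmod' \to \Mmod \to \Mmod/\Mmod' \to 0$ is strict for the $V$-filtration, yielding
$$0 \to \Gr_0^V\Mmod' \to \Gr_0^V\Mmod \to \Gr_0^V(\Mmod/\Mmod') \to 0.$$
It remains to identify $\Gr_0^V\Mmod'$ with the image of $\mathrm{can} = \partial_t\colon \Gr_{-1}^V\Mmod \to \Gr_0^V\Mmod$. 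Part (1) combined with the local PBW decomposition $\Dmod_X = \bigoplus_{k \ge 0} V_0\Dmod_X \cdot \partial_t^k$ gives $\Mmod' = \sum_{k \ge 0} V_{-1}\Mmod \cdot \partial_t^k$, and the key calculation is the equality
$$V_0\Mmod \cap \Mmod' = V_{<0}\Mmod + V_{-1}\Mmod \cdot \partial_t,$$
whose image in $\Gr_0^V\Mmod$ is precisely the image of $\mathrm{can}$.

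\textbf{Main obstacle.} The nontrivial inclusion $\subseteq$ in the equality above is the heart of the argument. Given $x = \sum_{k=0}^N m_k\partial_t^k \in V_0\Mmod \cap \Mmod'$ with $m_k \in V_{-1}\Mmod$ and $N$ minimal, the cases $N \le 1$ are immediate. For $N \ge 2$, all lower-degree terms lie in $V_{N-2}\Mmod$, as does $x$ itself (since $x \in V_0\Mmod \subseteq V_{N-2}\Mmod$), forcing $m_N\partial_t^N \in V_{N-2}\Mmod$. Strict specializability (Definition~\ref{quasi-unipotent}), which provides the isomorphisms $\partial_t\colon \Gr_\alpha^V\Mmod \to \Gr_{\alpha+1}^V\Mmod$ for every $\alpha > -1$, then propagates the vanishing class $[m_N\partial_t^N] = 0 \in \Gr_{N-1}^V\Mmod$ down to $\Gr_0^V\Mmod$, yielding $\mathrm{can}([m_N]) = 0$ and hence $m_N\partial_t \in V_{<0}\Mmod$. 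One rewrites $m_N\partial_t^N = (m_N\partial_t)\partial_t^{N-1}$ and reduces $N$; possible jumps of the rational $V$-filtration in the interval $[-1,0)$ are handled via the freedom in part (1) to represent $\Mmod'$ using $V_\alpha\Mmod$ for sufficiently negative $\alpha$. The ``in particular'' statement is then immediate: surjectivity of $\mathrm{can}$ makes $\mathrm{Coker}(\mathrm{can}) = 0$, so $\Mmod/\Mmod' = 0$ and $\Mmod = \Mmod' = V_\alpha\Mmod \cdot \Dmod_X$ for any $\alpha < 0$.
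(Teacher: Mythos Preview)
The paper does not give a proof of this lemma; it is simply quoted from \cite{Saito-MHP}*{Proposition 3.1.8}, so there is no argument in the paper to compare yours against. That said, your outline follows the standard strategy behind Saito's proof.

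Part (1) is clean and correct: minimality follows from Kashiwara's description of modules supported on $D$ together with strictness of the $V$-filtration under short exact sequences.

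For Part (2), your reduction to identifying $\Gr_0^V\Mmod'$ with $\mathrm{im}(\mathrm{can})$ is right, and the inductive scheme on the length $N$ of an expression $x=\sum_{k=0}^N m_k\partial_t^k$ is the natural one. The deduction that $[m_N\partial_t^N]=0$ in $\Gr_{N-1}^V\Mmod$, and then that $m_N\partial_t\in V_{<0}\Mmod$ via injectivity of $\partial_t^{N-1}\colon\Gr_0^V\to\Gr_{N-1}^V$, is correct. The weak point is the closing of the induction when the $V$-filtration has rational jumps in $(-1,0)$: you only obtain $m_N\partial_t\in V_{<0}\Mmod$, not $m_N\partial_t\in V_{-1}\Mmod$, so after absorbing this term into the $(N-1)$-st coefficient you are no longer in the inductive setup with coefficients in $V_{-1}\Mmod$. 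Your remark that this is ``handled via the freedom in part (1) to represent $\Mmod'$ using $V_\alpha\Mmod$ for sufficiently negative $\alpha$'' does not fix this as stated (making $\alpha$ more negative only forces $N$ to be larger for the first estimate to apply). One clean way to close the gap is to iterate the injectivity of $\partial_t$ at every jump: from $m_N\partial_t^N\in V_{<N-1}$ and the isomorphisms $\partial_t\colon\Gr_\beta^V\to\Gr_{\beta+1}^V$ for all $\beta>-1$ (including non-integral $\beta$), one descends through every jump in $(-1,N-1]$ to conclude directly that $m_N\partial_t\in V_{-1}\Mmod$ would require $[m_N]\in\ker(\mathrm{can})$; equivalently, one shows $\mathrm{can}'\colon\Gr_{-1}^V\Mmod'\to\Gr_0^V\Mmod'$ is surjective using that $\Mmod'$ admits no nonzero quotient supported on $D$. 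With such a refinement the argument goes through.
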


\begin{lemma}[\cite{Saito-MHP}*{Proposition 3.2.2}]\label{equivalence}
With the notation of Lemma \ref{extension_filtration}, and $j: U \rightarrow X$ the natural inclusion, we have that:

\noindent
(1) The first condition in Definition \ref{quasi-unipotent} is equivalent to 
$$F_p V_{< 0} \Mmod = V_{< 0 } \Mmod \cap j_* j^{-1} F_p \Mmod \,\,\,\,{\rm for ~all~}p.$$

\noindent
(2) If $\Mmod = V_{< 0} \Mmod \cdot \Dmod_X$, or equivalently if $ {\rm can} = \partial_t : \Gr_{-1}^V \Mmod \rightarrow \Gr_0^V\Mmod$  is 
surjective, the second condition in Definition \ref{quasi-unipotent} for $\alpha \ge -1$\footnote{There is an extra point here, 
for which I am grateful to C. Sabbah: in Definition
\ref{quasi-unipotent} one only considers $\alpha  > -1$, while in the lemma $\alpha = -1$ appears 
as well. However, the property we want for $\alpha = -1$ follows from Hodge theory conditions on $\Gr^V_{-1}$ and 
$\Gr^V_0$; in our application they will be trivially satisfied since both terms will be $0$.}  is equivalent to 
$$F_p \Mmod = \sum_{i \ge 0} (F_{p-i}  V_{< 0} \Mmod)\cdot \partial_t^i \,\,\,\,{\rm for ~all~} p.$$
\end{lemma}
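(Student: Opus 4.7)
The plan is to exploit the interplay between the Hodge filtration $F$, the $V$-filtration along $D = (t=0)$, and the actions of $t$ and $\partial_t$. Two structural facts are used throughout: multiplication by $t$ is injective on $V_\alpha \Mmod$ for $\alpha < 0$ (a consequence of the axiom $V_\alpha \Mmod \cdot t = V_{\alpha-1}\Mmod$ combined with nilpotence of $t\partial_t - \alpha$ on $\Gr_\alpha^V \Mmod$), and under the hypothesis of (2) the operator $\partial_t$ induces isomorphisms $\Gr_\alpha^V \Mmod \to \Gr_{\alpha+1}^V \Mmod$ for $\alpha > -1$ and a surjection for $\alpha = -1$. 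Throughout, $F_p V_\alpha \Mmod$ denotes the induced filtration $F_p \Mmod \cap V_\alpha \Mmod$.

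For (1), the inclusion $F_p V_{<0}\Mmod \subseteq V_{<0}\Mmod \cap j_* j^{-1} F_p \Mmod$ is immediate. For $(\Rightarrow)$, given $m \in V_\alpha \Mmod$ with $\alpha < 0$ and $t^N m \in F_p \Mmod$ for some $N \ge 0$, one has $t^N m \in F_p V_{\alpha-N}\Mmod$; iterating the hypothesis $(F_p V_\beta \Mmod)\cdot t = F_p V_{\beta-1}\Mmod$ together with the injectivity of $t$ lets one recover $m \in F_p V_\alpha \Mmod$. For $(\Leftarrow)$, given $m \in F_p V_{\alpha-1}\Mmod$ with $\alpha < 0$, the axiom $V_\alpha\Mmod\cdot t = V_{\alpha-1}\Mmod$ provides $m' \in V_\alpha\Mmod$ with $m'\cdot t = m$; the relation $m' = m/t$ on $U$ exhibits $m'$ as a section of $j_* j^{-1} F_p \Mmod$, so the assumed equality yields $m' \in F_p V_{<0}\Mmod \cap V_\alpha \Mmod = F_p V_\alpha \Mmod$.

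For (2), set $G_p \Mmod := \sum_{i \ge 0}(F_{p-i}V_{<0}\Mmod)\cdot \partial_t^i$; the inclusion $G_p \subseteq F_p$ is automatic from the goodness of $F$. The direction from the second condition of Definition~\ref{quasi-unipotent} to the formula proceeds by induction on $V$-degree: for $m \in F_p V_\alpha \Mmod$ with $\alpha \ge 0$ one applies the condition (with parameter $\alpha - 1 \ge -1$) to write $\bar m = \bar n \cdot \partial_t$ for some $\bar n \in F_{p-1}\Gr_{\alpha-1}^V \Mmod$, lifts to $n \in F_{p-1}V_{\alpha-1}\Mmod$, and observes $m - n\cdot \partial_t \in F_p V_{<\alpha}\Mmod$; both pieces lie in $G_p$ by induction. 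For the converse, given $\bar m \in F_{p+1}\Gr_{\alpha+1}^V \Mmod$ with $\alpha \ge -1$, lift to $m \in F_{p+1}V_{\alpha+1}\Mmod$, apply the formula to express $m = \sum_i n_i \cdot \partial_t^i$ with $n_i \in F_{p+1-i}V_{<0}\Mmod$, and argue that modulo $V_{<\alpha+1}\Mmod$ the $i=0$ summand vanishes (as $V_{<0} \subseteq V_{<\alpha+1}$), while each $i \ge 1$ term contributes $\overline{n_i \cdot \partial_t^{i-1}}\cdot \partial_t$ with $n_i \cdot \partial_t^{i-1} \in F_p V_\alpha \Mmod$.

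The main obstacle is this last step in the $(\Leftarrow)$ direction of (2): a generic decomposition $m = \sum_i n_i \cdot \partial_t^i$ provided by the formula need not respect the $V$-filtration term-by-term, since individual summands may lie in $V$-degree exceeding $\alpha + 1$ and only cancel as a sum. Handling this requires first rewriting the decomposition so that $n_i \cdot \partial_t^i \in V_{\alpha+1}\Mmod$ for every $i$. The idea is to use the structural isomorphisms $\Gr_\beta^V \Mmod \xrightarrow{\partial_t} \Gr_{\beta+1}^V \Mmod$ on the higher graded pieces (valid under can-surjectivity) to iteratively cancel any excess $V$-degree, producing a decomposition in which all summands lie in $V_{\alpha+1}\Mmod$; after this rewriting the argument above applies.
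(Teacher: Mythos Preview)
The paper does not itself prove this lemma; it is simply quoted from \cite{Saito-MHP}*{Proposition 3.2.2}, so there is no argument in the paper to compare against. Your sketch for part~(1) and for the direction $(\Rightarrow)$ of part~(2) is correct and essentially the standard one: the injectivity of $t$ on $V_{<0}$ handles~(1), and a descending induction on the (discrete) set of $V$-jumps handles the forward implication in~(2).

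The genuine gap is in the $(\Leftarrow)$ direction of~(2). You correctly identify the difficulty---an arbitrary expression $m=\sum_i n_i\partial_t^{\,i}$ need not have its summands individually in $V_{\alpha+1}$---but the proposed ``rewriting'' so that each $n_i\partial_t^{\,i}\in V_{\alpha+1}$ while keeping $n_i\in F_{p+1-i}V_{<0}$ is not carried out, and it is not clear it can be done as stated (for $i>\alpha+1$ this forces $n_i$ into the strictly smaller $V_{\alpha+1-i}$, and achieving this by cancelling excess $V$-degree among the summands requires a nontrivial inductive argument you have not supplied). There is, however, a much cleaner route that avoids the rewriting entirely. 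Observe that the formula, applied at levels $p+1$ and $p$, immediately gives
\[
F_{p+1}\Mmod \;=\; F_{p+1}V_{<0}\Mmod \;+\; \bigl(F_p\Mmod\bigr)\cdot\partial_t.
\]
So lift $\bar m\in F_{p+1}\Gr_{\alpha+1}^V$ to $m\in F_{p+1}V_{\alpha+1}$ and write $m=m_0+m_1\partial_t$ with $m_0\in F_{p+1}V_{<0}$ and $m_1\in F_p\Mmod$. Since $\alpha+1\ge 0$, $m_0\in V_{<0}\subseteq V_{<\alpha+1}$, whence $m_1\partial_t\in V_{\alpha+1}$. Now use that $\partial_t\colon\Gr_\beta^V\to\Gr_{\beta+1}^V$ is injective for $\beta\neq -1$ (this follows from nilpotence of $t\partial_t-\beta$ and needs no hypothesis on $\mathrm{can}$): if $m_1\notin V_\alpha$, let $\beta>\alpha\ge -1$ be minimal with $m_1\in V_\beta$; then $\overline{m_1}\partial_t\neq 0$ in $\Gr_{\beta+1}^V$, contradicting $m_1\partial_t\in V_{\alpha+1}\subseteq V_{<\beta+1}$. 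Thus $m_1\in F_pV_\alpha$, and $\bar m=\overline{m_1}\,\partial_t\in(F_p\Gr_\alpha^V)\cdot\partial_t$. Replacing your rewriting paragraph with this two-line computation closes the gap.
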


\subsection{Kodaira-Saito vanishing}\label{main}
We now come to the main goal, M. Saito's vanishing theorem. Before stating and proving the theorem, it is important to emphasize the following point: this is a result that works on \emph{singular} varieties by embedding them into smooth ambient spaces. It is known that the objects considered are independent of the embedding.

It is therefore important to have a way of thinking about mixed Hodge modules and filtered $\Dmod$-modules on singular 
varieties, compatible with the material developed for smooth varieties. In general this can only be done be locally embedding $X$ into smooth ambient spaces, and then using a gluing procedure (see \cite{Saito-MHM}*{\S2.1}).

However, on projective varieties we can use the embedding 
of $X$ into some $\PP^N$. If $X\hookrightarrow \PP^N$ is one such, then one defines the category of mixed Hodge modules on $X$ to be that of mixed Hodge modules on $\PP^N$ with support contained in $X$, i.e.
$${\rm MHM} (X) = {\rm MHM}_X (\PP^N).$$
One can do the same with any embedding $X\subset Z$ into a smooth variety; at least when $Z$ is projective, the fact that the resulting MHM($X$) is independent of the embedding follows by extending Kashiwara's equivalence theorem for closed embeddings to the setting of Hodge modules.

Indeed, recall that Kashiwara's theorem says that for a closed embedding $h:Z \hookrightarrow W$ one has
$${\rm Mod}_{{\rm coh}} (\Dmod_Z) \simeq {\rm Mod}_{{\rm coh}, Z} (\Dmod_W),$$
where the category on the right is that of coherent $\Dmod_W$-modules with support contained in $Z$. This correspondence restricts on both sides to the subcategories of objects with support contained in $X$. The equivalence does not extend in general to filtered $\Dmod$-modules; however, those underlying mixed Hodge modules are regular and quasi-unipotent (Definition \ref{quasi-unipotent}) along the zero-locus of any holomorphic function. 

In the regular and quasi-unipotent case, one can use Lemma \ref{divisor_support} for each local defining equation $f$ for $Z$ inside $W$ (or global equations when $W = \PP^N$) in order to deduce that for every $(\Mmod, F)$ on $W$ with support in $Z$, there exists $(\Mmod_Z, F)$ on $Z$ such that $(\Mmod, F) \simeq h_* (\Mmod_Z, F)$. Thus Kashiwara's theorem extends to these special filtered holonomic $\Dmod$-modules, which is the key step in extending it to mixed Hodge modules. Once this is established, it is not too hard to deduce that MHM($X$) 
is independent of the embedding; formally
\begin{equation}\label{kashiwara}
{\rm HM} (X, \ell) = {\rm HM}_X (Z, \ell) \,\,\,\,{\rm and} \,\,\,\, \MHM (X) = \MHM_X (Z)
\end{equation}
for any smooth $Z$ containing $X$. Further details can be found in \cite{Saito-MHP}*{Lemme 5.1.9}
and \cite{Saito-MHM}*{2.17.5}; see also \cite{Schnell3}*{\S6 and 7}.

\begin{theorem}[M. Saito, \cite{Saito-MHM}*{\S2.g}]\label{saito_vanishing}
Let $X$ be a complex projective variety, and $L$ an ample line bundle on $X$. Consider an integer $m >0 $ such that $L^{\otimes m}$ is very ample 
and gives an embedding $X \subseteq \PP^N$. Let $(\Mmod, F)$ be the filtered $\Dmod$-module underlying a mixed Hodge module $M$ on $\PP^N$ with support contained in $X$, i.e. an object in $\MHM (X)$. Then:

\noindent
(1) $\Gr_k^F \DR_{\PP^N} (\Mmod)$ is an object in $\D^b (X)$ for each $k$, independent of the embedding of $X$ in $\PP^N$.\footnote{In fact, based 
on the discussion above it can be shown that each $\Gr_k^F \DR_{\PP^N} (\Mmod)$ is independent of the embedding of $X$ into any smooth complex variety.}

\noindent
(2) We have the hypercohomology vanishing 
$$H^i \bigl( X, \Gr_k^F \DR_{\PP^N} (\Mmod) \tensor L \bigr) = 0 {\rm ~ for~ all~} i > 0.$$
and 
$$H^i \bigl( X, \Gr_k^F \DR_{\PP^N} (\Mmod) \tensor L^{-1} \bigr) = 0\,\,\,\,{\rm~ for~ all~} i < 0.$$
\end{theorem}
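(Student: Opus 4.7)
The plan is to carry out Ramanujam's strategy from Section~\ref{classical} in the mixed Hodge module setting: the Artin--Grothendieck vanishing theorem for perverse sheaves on affine varieties takes the role of weak Lefschetz, and Saito's strictness theorem (Theorem~\ref{stability}(i)), equivalent to $E_1$-degeneration of the Hodge-to-de Rham spectral sequence, takes the role of the classical Hodge decomposition. The cyclic cover construction of Proposition~\ref{cyclic} is retained, but its interaction with the Hodge filtration now has to be controlled through the $V$-filtration machinery from Section~\ref{V-filtration}.

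Part~(1) is essentially formal. Near any point of $\PP^N$ the support of $M$ lies in the vanishing locus of some holomorphic function $f$ cutting out $X$, so $(\Mmod,F)$ is regular and quasi-unipotent along $f$; Lemma~\ref{divisor_support} then gives $\Gr^F_p \Mmod \cdot f=0$ for every $p$, so each term of $\Gr_k^F \DR_{\PP^N}(\Mmod)$ is a coherent $\shO_{\PP^N}$-module annihilated by the ideal sheaf of $X$, i.e.\ a coherent $\shO_X$-module. Independence of the embedding follows by a graph-of-the-inclusion construction together with the extended Kashiwara equivalence~(\ref{kashiwara}).

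For Part~(2), the two vanishing statements are Grothendieck--Serre dual via Lemma~\ref{duality} (applied weight by weight using $W_\bullet M$ and $\D M\simeq M(\ell)$), so it suffices to prove the first one. The long exact hypercohomology sequences attached to $W_\bullet M$ reduce us to $M$ pure, and the support decomposition~(\ref{support_decomposition}) together with Theorem~\ref{structure} to an $M$ with strict support some irreducible $Z\subseteq X$ extending a polarizable variation of Hodge structure on an open $V\subseteq Z$. Next choose $s\in H^0(X,L^{\otimes m})$ whose zero locus $D$ is smooth and in general position with respect to $Z$ and the locus where $M$ is not a variation of Hodge structure (so that Example~\ref{nonchar_example} applies), and form the degree-$m$ cyclic cover $\pi\colon Y\to X$ of Proposition~\ref{cyclic}, with $\pi^*D=mD'$ and $\pi_*\shO_Y\simeq\bigoplus_{j=0}^{m-1}L^{-j}$. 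Embedding $Y$ into an ambient smooth projective variety and combining the non-characteristic pullback of Lemma~\ref{noncharacteristic} on $Y\smallsetminus D'$ with the localization of Lemma~\ref{local_nonchar} across $D'$, one obtains a pulled-back mixed Hodge module on $Y$ whose underlying perverse sheaf $P'$ satisfies $H^i(Y\smallsetminus D', P')=0$ for $i>0$ by Artin--Grothendieck vanishing, since $Y\smallsetminus D'$ maps finitely to the affine variety $X\smallsetminus D$ and is therefore itself affine.

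The translation of this topological vanishing into hypercohomology vanishing for $\Gr_k^F\DR$ proceeds via Saito's machinery: strictness of the filtered direct image for $Y\to\pt$ (Theorem~\ref{stability}(i)) yields $E_1$-degeneration of the Hodge-to-de Rham spectral sequence for the localization $\pi^*\Mmod(*D')$ on $Y$, converting the above into $H^i\bigl(Y,\Gr_k^F\DR_Y(\pi^*\Mmod(*D'))\bigr)=0$ for $i>0$. Pushing down by $\pi$, invoking Saito's commutation formula $R^i\pi_*\Gr_k^F\DR_Y\simeq \Gr_k^F\DR_X\circ H^i\pi_+$ from the stability theorem, and decomposing according to the characters of the $\mu_m$-action on $\pi_*\shO_Y$, one obtains the vanishing $H^i\bigl(X,\Gr_k^F\DR_{\PP^N}(\Mmod)\otimes L^{-j}\bigr)=0$ for $i>0$ and every $j=0,\dotsc,m-1$; the character $j=1$ (together with the duality above, which yields the corresponding statement twisted by $L$) gives the required conclusion. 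The main obstacle, and the point where Saito's $V$-filtration theory is indispensable, is the precise identification of the pushforward of the Hodge filtration on $\pi^*\Mmod(*D')$ with $F_\bullet\Mmod\otimes L^{-j}$ in each character space; this requires the compatibility statements between the Hodge filtration and the $V$-filtration along $D$ established in Lemmas~\ref{noncharacteristic}, \ref{local_nonchar}, \ref{extension_filtration}, and~\ref{equivalence}.
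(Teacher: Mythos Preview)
Your outline follows the paper's Ramanujam strategy, and Part~(1), the reduction to a pure module with strict support, and the roles of Artin--Grothendieck and $E_1$-degeneration are all correct. But the final paragraph contains a genuine gap. First, the character decomposition is misidentified: pushing down $\pi^*\Mmod(*D')$ and splitting by $\mu_m$-characters, the $j$-th piece (for $j\ge 1$) is $\Mmod(*D)\otimes L^{-j}$, \emph{not} $\Mmod\otimes L^{-j}$ --- the poles introduced by localizing upstairs do not disappear. So your argument yields at best $H^i\bigl(X,\Gr_k^F\DR(\Mmod(*D))\otimes L^{-j}\bigr)=0$ for $i>0$. Passing from $\Mmod(*D)$ back to $\Mmod$ requires the filtered exact sequence $0\to\Mmod\to\Mmod(*D)\to\mathcal{H}^1 i_D^{!}\Mmod\to 0$ of Lemma~\ref{local_nonchar}(2) together with induction on $\dim X$ for the third term; you have omitted this step entirely, and your final duality move is also off: Lemma~\ref{duality} converts ``$L^{-1}$, $i>0$'' into ``$L$, $i<0$'', which is neither of the desired statements.

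More seriously, the induction cannot close with only the one-sided vanishing you obtain. The localization $\pi^*\Mmod(*D')$ is genuinely \emph{mixed}, so its perverse sheaf is not Verdier self-dual, and Artin--Grothendieck on the affine complement gives only $i>0$; tracing the long exact sequence against the inductive hypothesis on $D$ (which provides the \emph{opposite} range $i<0$) shows that nothing assembles. This is exactly the difficulty the paper's argument is designed to avoid: instead of the localization, one works with the \emph{pure} Hodge module $\tilde M:=\coker(M\to f_*f^*M)$ on the ambient $\PP^N$ (extending the cover locally across the hyperplane). Because $\tilde M$ has strict support, its perverse sheaf is simultaneously the IC-extension and $j_*j^{-1}\tilde P$ (the nontrivial characters have no monodromy eigenvalue $1$ along $D$), so Artin--Grothendieck together with Verdier self-duality give $H^i(X,\DR(\tilde\Mmod))=0$ for \emph{all} $i\neq 0$; Theorem~\ref{stability}, now legitimately applied to a pure module, transfers this to every $\Gr_k^F\DR(\tilde\Mmod)$. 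The $V$-filtration computation you allude to then identifies $(\tilde\Mmod,F)$ with $(\Mmod(*D),F)\otimes\bigoplus_{j=1}^{m-1}L^{-j}$, and the \emph{two-sided} vanishing makes the induction-on-dimension step go through.
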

\begin{proof}

\noindent
\emph{Step 1.}
This step addresses (1) and a number of useful reductions towards (2). For the first statement in (1), due 
to the definition of $\Gr_k^F \DR_{\PP^N} (\Mmod)$, it is enough to have that each $\Gr_k^F \Mmod$ is an $\shO_X$-module. But note 
that by Lemma \ref{divisor_support}, if for a holomorphic function $f$ the support of $\Mmod$ is contained in $f^{-1} (0)$, 
the condition of $(\Mmod, F)$ being regular and quasi-unipotent along $f$ is equivalent to having 
$$\Gr^F_p \Mmod \cdot f =0  \,\,\,\,\,\,{\rm for ~all~}p.$$
Now our $(\Mmod, F)$ satisfies this for any $f$, as it underlies a Hodge module, and applying it for the defining equations of $X$ inside $\PP^N$
we obtain the conclusion. 

Note that the independence on the embedding of the definition $\MHM (X) = \MHM_X (\PP^N)$ 
follows from the discussion preceding the statement of the theorem. However here strictly speaking  
one only needs to know independence of embeddings $X \hookrightarrow \PP^N$ by various powers $L^{\otimes m}$. 
Thus the Kashiwara-type statement
($\ref{kashiwara}$) actually suffices, as any two such can be compared inside a common Veronese embedding.

Along the same lines, the independence of the embedding for the complex of $\shO_X$-modules $\Gr_k^F \DR_{\PP^N} (\Mmod)$ 
follows then from the remark above and the fact that if $h:Z \hookrightarrow W$ is a closed embedding of two smooth varieties containing $X$, and 
$(\Mmod, F) \simeq h_* (\Mmod_Z, F)$ on $W$, then one has the easily checked formula
$$\Gr_k^F \DR_{W} (\Mmod, F )\simeq h_* \Gr_k^F \DR_{Z} (\Mmod_Z, F).$$

Based on the fact that our objects do not depend on the embedding $X \subseteq \PP^N$, to attack (2) we may assume furthermore that $m \ge 2$. This will come up later, as we will need to produce non-integral rational numbers with denominator $m$.

A standard reduction is that it is enough to assume that $M$ is a polarized pure Hodge module with strict support $X$, of some weight $d$. First, once we have reduced to the case of pure Hodge modules, we can apply the strict support direct sum decomposition ($\ref{support_decomposition}$) to reduce to this case. On the other hand, if $M$ is in $\MHM (X)$, recall that it has a 
finite weight filtration $W_{\bullet} M$ by objects in $\MHM (X)$, such that the graded quotients
$\Gr_{\ell}^W M = W_{\ell} M / W_{\ell-1} M$ are in ${\rm HM} (X, \ell) = {\rm HM}_X (\PP^N, \ell)$. 
To reduce to the pure case, we simply 
use the fact that the functor  $\Gr_k^F \circ \DR$ is exact by construction.
 
Given this last reduction, we also see that it is enough to check only the second statement in (2). This follows from Grothendieck-Serre duality and Lemma \ref{duality}.

\noindent
\emph{Step 2.}
Let $Y$ be a general hyperplane in $\PP^N$, chosen to be  
non-characteristic for $(\Mmod, F)$. Denote $D = X \cap Y$, the zero locus of some section 
$s \in H^0 (X , L^{\otimes m})$.
Let $f: \tilde X \rightarrow X$ be the $m$-fold cyclic cover branched along $D$ as 
in Proposition \ref{cyclic}, with $f^* D = mD^\prime$ and $L^\prime = \shO_{\tilde X} (D^\prime)$. 

Denote now 
$$U= \PP^N \smallsetminus Y \,\,\,\,{\rm and} \,\,\,\, j : U \hookrightarrow \PP^N$$
the natural inclusion of the (affine) complement of $Y$.
Denoting also by $i : Y\hookrightarrow \PP^N$ the inclusion of $Y$, by Lemma \ref{local_nonchar} there is a filtered short exact sequence
\begin{equation}\label{poles}
0 \longrightarrow (\Mmod, F) \longrightarrow (\Mmod(*Y), F) \longrightarrow (\shH^1 i^{!} \Mmod, F) \longrightarrow 0
\end{equation}
(Note that here $\shH^1 i^{!} \Mmod$ simply means $\Mmod \otimes \omega_{Y/\PP^N}$.) 

For each $k$, we apply the exact functor $\Gr_k^F \circ \DR_{\PP^N}$ to (\ref{poles}) to obtain a distinguished 
triangle of complexes of coherent sheaves on $X$: 
$$ \Gr_k^F \DR_{\PP^N}(\Mmod)\otimes L^{-1} \longrightarrow \Gr_k^F \DR_{\PP^N} \big(\Mmod(*Y)\big)\otimes L^{-1}  \longrightarrow$$ 
$$\longrightarrow \Gr_k^F \DR_{\PP^N} (\shH^1 i^{!} \Mmod)\otimes L^{-1} \longrightarrow  \Gr_k^F \DR_{\PP^N}(\Mmod)\otimes L^{-1} [1] .$$
The claim is that 
\begin{equation}\label{claim}
H^i \big(X, \Gr_k^F \DR_{\PP^N}\big(\Mmod(*Y)\big)\otimes L^{-1} \big) = 0 \,\,\,\,{\rm for~all}~i \neq 0.
\end{equation}
This will be proved in Step 4.
Assuming it for now, by the long exact sequence on cohomology we are reduced to showing
$$H^i \big(X, \Gr_k^F \DR_{\PP^N} (\shH^1 i^{!} \Mmod)\otimes L^{-1} \big) = 0 \,\,\,\,{\rm for~all}~i < -1.$$
But in fact the statement is true even for $i < 0$ by induction on $n = \dim X$, since $(\shH^1 i^{!} \Mmod, F)$ is supported on $D$
and, again by non-characteristic pullback as in Section \ref{V-filtration}, it underlies a Hodge module in ${\rm HM}_D (Y, d+1)$.

\noindent
\emph{Step 3.}
Note first that we can extend the cover $f: \tilde X \rightarrow X$ ramified over $D$
to a cover still denoted $f: \tilde \PP^N \rightarrow \PP^N$, ramified over $Y$; it is enough 
to do this locally since Hodge modules are local by construction. Fix a point $x \in X$. The claim is that there exists a 
neighborhood $x \in U_x \subset \PP^N$ such that the restriction of $f: \tilde X \rightarrow X$ over $U_x \cap X$ can be extended to a finite cover $f_x: V_x \rightarrow U_x$, ramified over $Y \cap U_x$. If $x\not\in Y$, it is clear that there is such an extension. On the other hand, if $x \in Y$, then one uses 
a local holomorphic trivialization $(U_x, U_x \cap X) \simeq (U_x \cap Y, U_x \cap D) \times {\mathbb D}^2$, considering a contractible neighborhood of $x$ in $Y$ such that the contraction is compatible with $D$.

This new $f$ is non-characteristic for $(\Mmod, F)$ by our choice of $Y$, and so the filtered pullback $(f^*\Mmod, F)$ 
on $\tilde \PP^N$ can be defined as in the remarks after Definition \ref{transverse}. It underlies a pure Hodge module $f^*M$ of weight $d$, as the relative dimension is zero. By Theorem \ref{stability} we then obtain $f_* f^* M \in {\rm HM}_X (\PP^N, d)$; note that this is a single Hodge module since $f$ is finite. 
There is a natural monomorphism $M \rightarrow f_* f^* M$, and we define $\tilde M$ as its cokernel, so that there
is an exact sequence
\begin{equation}\label{push-pull}
0\longrightarrow M \longrightarrow f_*f^* M\longrightarrow \tilde M \longrightarrow 0,
\end{equation}
in the abelian category ${\rm HM}_X (\PP^N, d)$, i.e. $\tilde M$ is a new pure polarized Hodge module 
of weight $d$ with support contained in $X$. Note that by Saito's fundamental result mentioned in Section \ref{VHS}, all the 
Hodge modules in the exact sequence above are uniquely extended from the open subset of $U$ on which they are variations of Hodge structure; in particular they coincide with the strict support extension of their 
restriction to $U$.


We denote by $\tilde P$ the $\QQ$-perverse sheaf associated to $\tilde M$, so that 
$\DR_{\PP^N} (\tilde \Mmod) \simeq \tilde P_{\CC} := \tilde P \otimes \CC$. Since as mentioned above $\tilde M$ is the unique extension with
strict support $X$ of its restriction to $U$,  we have 
$$\tilde P \simeq j_* j^{-1} \tilde P,$$
i.e. $\tilde P$ is the extension of its restriction to the affine open set $U$ as well. By the Artin-Grothendieck vanishing theorem 
(see e.g. \cite{Lazarsfeld}*{Theorem 3.1.13}), we then have
$$H^i (X, \tilde P_{\CC}) \simeq H^i (U, j^{-1} \tilde P_{\CC}) = 0 \,\,\,\,{\rm for~all~} i > 0.$$
Since $\tilde M$ is polarized, as in Section \ref{duality} we have that $\D \tilde P \simeq \tilde P (d)$, 
where $\D \tilde P$ is the Verdier dual. By Verdier duality we then also get 
$$H^i (X, \tilde P_{\CC})  = 0 \,\,\,\,{\rm for~all~}\,\, i < 0.$$
In conclusion, we have verified that 
\begin{equation}\label{vanishing_1}
 H^i \big(X, \DR_{\PP^N} (\tilde \Mmod)\big) = 0 \,\,\,\, {\rm for~all~} \,\, i \neq 0.
\end{equation}

The main assertion in this step is that
\begin{equation}\label{vanishing_2}
H^i \big(X, \Gr_k^F \DR_{\PP^N} (\tilde \Mmod) \big) = 0 \,\,\,\,{\rm for~all}~k~{\rm and~all~} i \neq 0.
\end{equation}
To this end we need to use stability under projective morphisms, Theorem \ref{stability}; applied to the absolute case $\PP^N \rightarrow {\rm pt}$, the strictness in the statement amounts to the degeneration at $E_1$ of 
the natural Hodge-to-de Rham spectral sequence  
$$E_1^{p,q} = H^{p+ q}  \big(X, \Gr_{-q}^F \DR_{\PP^N} (\tilde \Mmod) \big) \implies  H^{p+q} \big(X, \DR_{\PP^N} (\tilde \Mmod)\big).$$
Note that here we are using the identification $f_*  \tilde \Mmod \simeq \R \Gamma  \DR_{\PP^N} (\tilde \Mmod)$ (which is a special 
case of the definition of push-forward via smooth morphisms). Given (\ref{vanishing_1}), this degeneration immediately implies (\ref{vanishing_2}).

\noindent
\emph{Step 4.} We are left with proving (\ref{claim}), which will be done in this step.
More precisely, for each $k$ we will prove the isomorphism 
\begin{equation}\label{main_isom}
\Gr_k^F \tilde \Mmod \simeq \Gr_k^F \Mmod(* Y) \otimes \tilde L,
\end{equation}
where 
$$\tilde L : = {\rm Coker} (\shO_X \rightarrow \pi_* \shO_{\tilde X}) \simeq L^{-1} \oplus \cdots\oplus L^{- (m-1)},$$
the last isomorphism coming from Proposition \ref{cyclic}. The isomorphism (\ref{main_isom}) implies what we want in combination with (\ref{vanishing_2}); it is proved using the interaction between the Hodge filtration and the $V$-filtration along $Y$.

To this end, note first that by definition there is a canonical isomorphism of filtered right $\Dmod_U$-modules
$$(\tilde \Mmod, F)_{|U} \simeq (\Mmod, F)\otimes_{\shO_U} \tilde L_{|U}.$$
Indeed, this follows from (\ref{push-pull}) and the definition of the filtration on $f^*\Mmod$ given after Definition
\ref{transverse}; passing to the filtration on the $\Dmod$-modules underlying (\ref{push-pull}) is, on the open set $U$ on which $f$ is \'etale, the same as the split short exact sequence 
$$0 \longrightarrow F_p \Mmod \longrightarrow F_p \Mmod \otimes f_* \shO_{\tilde \PP^N} \longrightarrow F_p \Mmod \otimes \tilde L \longrightarrow 0.$$

Here and in what follows we consider $\tilde L$  as a left $\Dmod$-module with trivial filtration. On the open set $U$ it is by definition an integrable connection, underlying the complement of $\QQ_U$ in $f_* \QQ_{f^{-1}(U)}$. On the other hand, we know from \cite[\S6]{EV}
that globally each $L^{-i}$ is the Deligne canonical extension of $L^{-i}_{|U}$, whose meromorphic connection has residue $i/m$ along $Y$. The direct sum $\tilde L$ is the $\Dmod$-module underlying the canonical extension of this complement.
The tensor product $\Mmod \otimes \tilde L$ becomes a right $\Dmod$-module,\footnote{Recall that if $\Mmod$ is a right $\Dmod_X$-module, and $\Nmod$ a left $\Dmod_X$-module, the tensor product $\Mmod \otimes_{\shO_X} \Nmod$ has a natural 
right $\Dmod_X$-module structure; see e.g. \cite[Proposition 1.2.9(ii)]{HTT}.} with the induced tensor product filtration.

The statement follows if we show that the isomorphism on $U$ above can be extended uniquely to an isomorphism of filtered 
right $\Dmod$-modules
\begin{equation}\label{extension}
(\tilde \Mmod, F) \simeq (\Mmod (*Y), F)\otimes_{\shO_{\PP^N}} \tilde L.
\end{equation}
Both sides of (\ref{extension}) are regular holonomic; moreover, they
are isomorphic to their localization along $Y$, i.e. a local equation of $Y$ acts on them bijectively.
Forgetting about the filtration, the 
isomorphism in (\ref{extension}) then follows from Proposition \ref{equivalence_localization}. 

As for the filtration $F$, we need to compare it to the $V$-filtration along the divisor $Y$. 
The first claim is that $(\Mmod (*Y), F)\otimes \tilde L$ is regular and quasi-unipotent along $Y$; see Definition \ref{quasi-unipotent}.
To this end, note first that the summand $\Mmod (*Y) \otimes L^{-i}$ of $\Mmod (*Y) \otimes \tilde L$, 
with $i$ ranging from $1$ to $m-1$, coincides with $\Mmod (*Y)$ on $U$, and so their $V$-filtrations along $Y$ are the same 
for $\alpha < 0$.  On the other hand, if $t$ is a local equation for $Y$, then multiplication by $L^{-i}$ coincides with the action of 
$t^{i /m}$, and so by the definition of the $V$-filtration we obtain for each $\alpha$:
\begin{equation}\label{V_commutation}
V_\alpha \big(\Mmod (*Y) \otimes L^{-i}\big) = V_{\alpha + i/m} \Mmod (*Y) \otimes L^{-i} .
\end{equation}
This gives in particular  
$$
F_p V_{\alpha} \big(\Mmod (*Y) \otimes L^{-i} \big) = \big(F_p V_{\alpha + i/m} \Mmod (*Y) \big) \otimes L^{-i}
$$
for all possible indices. Using this, the fact that $(\Mmod (*Y), F)\otimes \tilde L$ is regular and quasi-unipotent is an immediate consequence of the fact that $\Mmod (*Y)$ is so (as it underlies a mixed Hodge module), together with Lemma \ref{local_nonchar}(1).

From ($\ref{V_commutation}$) we also obtain that
$$\Gr_{\alpha }^V \big(\Mmod (*Y) \otimes L^{-i} \big) \simeq \Gr_{\alpha + i/m}^V \big(\Mmod (*Y)\big) \otimes L^{-i}.$$
We see however from Lemma \ref{local_nonchar}(1) that 
$$\Gr_{\alpha}^V \Mmod (*Y) = 0 \,\,\,\,{\rm for~} \alpha \not\in \ZZ,$$
and therefore  
$$\Gr_{\alpha }^V \big(\Mmod (*Y) \otimes L^{-i} \big) = 0 \,\,\,\,{\rm for~} \alpha + i / m \not\in \ZZ.$$
The bottom line is that in order to have $\Gr_{\alpha }^V \big(\Mmod (*Y) \otimes \tilde L \big) \neq 0$, one must have 
$\alpha + i/m \in \ZZ$ for all $1\le i \le m-1$, and consequently $\alpha$ cannot be an integer (recall that we are 
working with $m \ge 2$).

Let us now denote $\Mmod^\prime = \Mmod(*Y) \otimes \tilde L$ for simplicity.
Using this last remark, by Lemma \ref{extension_filtration} we deduce that $\Mmod^\prime$ is generated as a 
$\Dmod$-module by the negative part of its  $V$-filtration, i.e.
$$\Mmod^\prime \simeq V_{<0} \Mmod^\prime \cdot \Dmod_{\PP^N}.$$
The next thing to note is that, again since the jumps in the $V$-filtration do not happen at integers, according to 
Lemma \ref{equivalence}(2) the second condition in Definition \ref{quasi-unipotent} is equivalent to the fact that 
$$F_p \Mmod^\prime = \sum_{i\ge 0} \big( F_{p-i} V_{<0}\Mmod^\prime \big)\cdot \partial_t^i$$
for all $p$. Consequently, the Hodge filtration is determined by its restriction to the negative part of the $V$-filtration. 
Finally, this restriction is determined by the corresponding filtration on the open complement $U$ since according to 
Lemma \ref{equivalence}(1) for all $p$ we have
$$F_p V_{< 0} \Mmod^\prime = V_{<0} \Mmod^\prime \cap j_* j^{-1} F_p \Mmod^\prime.$$
As $(\tilde \Mmod, F)$ and $(\Mmod (*Y), F)\otimes \tilde L$
coincide on $U$, and as $(\tilde \Mmod, F)$ was defined by extension from $U$, the two filtered $\Dmod$-modules 
must then agree everywhere.
\end{proof}

\subsection{Particular cases}\label{particular_cases}
In this section I will explain how M. Saito's vanishing theorem can be used to deduce many of the standard 
vanishing theorems. In the next sections I will make the point however that the abstract version is equally 
valuable for concrete applications.

\noindent
{\bf Kodaira-Nakano vanishing.}
Let $X$ be a smooth projective complex variety of dimension $n$.
We consider the trivial Hodge module $M = \QQ_X^H \decal{n}$. 
According to Example \ref{basic}, the corresponding right $\Dmod$-module is $\omega_X$, with filtration
$F_p \omega_X = \omega_X$  if  $p \ge -n$ and $F_p \omega_X = 0$ if $ p < -n$, so that
$$\Gr_{-p}^F \DR_X(\omega_X) = \Omega_X^p \decal{n-p} \,\,\,\, {\rm for ~all~} p.$$ 
Theorem \ref{saito_vanishing} gives
$$H^q (X, \Omega_X^p \otimes L) = 0 \,\,\,\,{\rm for~} p+ q > n$$
and the dual statement, for any $L$ ample, i.e. Kodaira-Nakano vanishing.

If we restrict to the Kodaira vanishing theorem, which corresponds to the lowest non-zero piece of the filtration on 
$\omega_X$, then 
we can see it as an example of the following more easily stated special case of Theorem \ref{saito_vanishing}; it is useful to record this for applications.

\begin{corollary}\label{lowest}
If $(\Mmod, F)$ is a filtered $\Dmod$-module underlying a mixed Hodge module $M$ on a 
projective variety $X$, and $L$ is an ample line bundle on $X$, then
$$H^i (X, S(\Mmod) \otimes L ) = 0 \,\,\,\,{\rm for~all~} i > 0.$$
\end{corollary}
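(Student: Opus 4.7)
The plan is to identify the lowest graded piece of the de Rham complex $\Gr_{p(\Mmod)}^F \DR_X(\Mmod)$ with the sheaf $S(\Mmod)$ concentrated in a single degree, and then apply Theorem~\ref{saito_vanishing} directly.

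Recall from \parref{filtered} that the graded piece of the filtered de Rham complex has the explicit form
\[
\Gr_k^F \DR_X(\Mmod) = \Big\lbrack \bigwedge^n T_X \otimes \Gr_{k-n}^F \Mmod \to \dotsb \to \Gr_{k-1}^F \Mmod \otimes T_X \to \Gr_k^F \Mmod \Big\rbrack [n],
\]
sitting in cohomological degrees $-n, \dotsc, 0$ after the shift. By the very definition of $p(\Mmod)$, we have $\Gr_j^F \Mmod = 0$ for all $j < p(\Mmod)$, so when we take $k = p(\Mmod)$ every term of the complex vanishes except the rightmost one. Thus there is a canonical isomorphism
\[
\Gr_{p(\Mmod)}^F \DR_X(\Mmod) \simeq S(\Mmod),
\]
with $S(\Mmod)$ placed in degree zero. (The fact that $S(\Mmod)$ is genuinely an $\shO_X$-module, not just an $\shO_{\PP^N}$-module supported on $X$, was already verified in Step~1 of the proof of Theorem~\ref{saito_vanishing} via \lemmaref{divisor_support}.)

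With this identification, Theorem~\ref{saito_vanishing}(2) applied to the index $k = p(\Mmod)$ and the ample line bundle $L$ yields
\[
H^i\bigl(X, S(\Mmod) \otimes L\bigr) \simeq H^i\bigl(X, \Gr_{p(\Mmod)}^F \DR_X(\Mmod) \otimes L\bigr) = 0 \quad \text{for all } i > 0,
\]
which is exactly the claimed vanishing. There is really no obstacle here beyond the bookkeeping needed to unwind the definition of $\Gr_k^F \DR_X(\Mmod)$ at the extremal index; the content is entirely in Theorem~\ref{saito_vanishing}, and the corollary is the special case that packages the lowest-piece statement in a form convenient for applications.
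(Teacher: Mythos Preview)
Your proposal is correct and is exactly the argument the paper has in mind: the corollary is stated as ``a more easily stated special case of Theorem~\ref{saito_vanishing}'' with no separate proof, and your identification $\Gr_{p(\Mmod)}^F \DR_{\PP^N}(\Mmod) \simeq S(\Mmod)$ in degree zero (all other terms vanishing because $\Gr_j^F \Mmod = 0$ for $j < p(\Mmod)$) is precisely the unpacking required. One cosmetic point: since $X$ may be singular, the de Rham complex is taken on the ambient $\PP^N$ (or any smooth variety containing $X$), so strictly one should write $\DR_{\PP^N}$ rather than $\DR_X$; the argument is unaffected.
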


\noindent
{\bf Koll\'ar  vanishing.} The following theorem of Koll\'ar is a natural generalization of Kodaira vanishing to higher direct images of canonical bundles. 

\begin{theorem}[\cite{Kollar1}*{Theorem 2.1(iii)}]\label{kollar}
Let $f: X \rightarrow Y$ be a morphism between complex projective varieties, with $X$ smooth, and let $L$ be an ample 
line bundle on $Y$. Then
$$H^i (X, R^j f_* \omega_X \otimes L) = 0 \,\,\,\, {\rm for ~all~} i > 0 \,\,\,\,{\rm and~all~}j.$$
\end{theorem}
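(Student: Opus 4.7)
The plan is to reduce Koll\'ar's vanishing to Corollary~\ref{lowest} applied to the pure Hodge modules obtained by pushing forward the trivial Hodge module on $X$. Throughout, read $H^i(X,R^jf_*\omega_X\tensor L)$ as $H^i(Y,R^jf_*\omega_X\tensor L)$, since $R^jf_*\omega_X$ is a sheaf on $Y$.

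First I would consider the trivial Hodge module $\QQ_X^H\decal{n}$ on the smooth projective $X$, whose underlying filtered right $\Dmod_X$-module is $(\omega_X,F)$ with $p(\omega_X)=-n$ and $S(\omega_X)=\omega_X$ (Example~\ref{basic}). Since $f:X\to Y$ is projective, Theorem~\ref{stability} applies: the filtered direct image $f_+(\omega_X,F)$ is strict, and each cohomology $\cohH^j f_+(\omega_X,F)$ underlies a pure Hodge module $M_j$ on $Y$ (of weight $n+j$), where we make sense of Hodge modules on the possibly singular $Y$ by embedding $Y\hookrightarrow\PP^N$ via a sufficiently high power of $L$ and using the Kashiwara-type identification (\ref{kashiwara}). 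The Saito decomposition (Theorem~\ref{decomposition}) then yields
\[
f_+(\omega_X,F)\simeq\bigoplus_{j}\cohH^j f_+(\omega_X,F)\decal{-j}
\]
in $\D^b({\rm FM}(\Dmod_Y))$.

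Next I would identify the lowest Hodge piece of each $M_j$ with $R^jf_*\omega_X$. This is precisely the computation in Example~\ref{direct_image}: for the trivial Hodge module one has $p(\Mmod_j)=-n$ and
\[
S(\Mmod_j)=F_{-n}\cohH^j f_+\omega_X\simeq R^jf_*\omega_X,
\]
which follows from Saito's strictness together with the observation that the lowest term of the filtered de Rham/Spencer complex of $(\omega_X,F)$ is $\omega_X$ sitting in cohomological degree $0$, so $F_{-n}(\omega_X\tensorL_{\Dmod_X}\Dmod_{X\to Y})$ computes $\omega_X\tensor_{\shO_X}\Dmod_{X\to Y}$ in the appropriate range; strictness promotes the cohomology sheaves to the corresponding pieces of the filtration on $\cohH^jf_+\omega_X$.

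Finally, with $R^jf_*\omega_X\simeq S(\Mmod_j)$ identified as the lowest nonzero graded piece of the filtration on the $\Dmod$-module underlying a mixed Hodge module on $Y$, I would apply Corollary~\ref{lowest} with the ample line bundle $L$ on $Y$ to each $j$ separately, obtaining
\[
H^i(Y,R^jf_*\omega_X\tensor L)=H^i(Y,S(\Mmod_j)\tensor L)=0\quad\text{for all }i>0.
\]
The main conceptual obstacle is making sure the identification of $R^jf_*\omega_X$ with the lowest Hodge piece is legitimate on a possibly singular base $Y$; this is taken care of by embedding $Y$ in $\PP^N$ and invoking the extension of Kashiwara's equivalence to filtered $\Dmod$-modules underlying Hodge modules, so that Corollary~\ref{lowest} may be applied there. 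Everything else is a direct unwinding of the decomposition theorem together with the strictness of $f_+$ on Hodge modules.
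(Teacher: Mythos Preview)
Your proposal is correct and follows essentially the same approach as the paper: push forward the trivial Hodge module via $f$, use the Saito decomposition and strictness to identify $S(\Mmod_j)\simeq R^jf_*\omega_X$ as in Example~\ref{direct_image}, and then apply Corollary~\ref{lowest}. Your write-up is in fact more detailed than the paper's, which simply cites Example~\ref{direct_image} and Corollary~\ref{lowest}; your remarks on handling the possibly singular base via an embedding into $\PP^N$ are appropriate and match the paper's general setup.
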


To deduce the statement from Theorem \ref{saito_vanishing}, 
we consider the push-forward $M = f_* \QQ_X^H \decal{n}$ of the 
trivial Hodge module on $X$, with $n = \dim X$. 
According to Example \ref{direct_image}, for the underlying $\Dmod$-modules we have  
$$f_+ (\omega_X, F)  \simeq \bigoplus_i (\Mmod_i, F) [-i]$$ 
in the derived category of filtered $\Dmod_Y$-modules (so compatible with inclusions into smooth varieties), and for 
each $i$ we have $S(\Mmod_i) = R^i f_* \omega_X$. Theorem \ref{kollar} then follows from Corollary \ref{lowest}.
More generally, the same argument shows the following vanishing theorem due to Saito: the statement of Theorem \ref{kollar} 
holds for $R^i f_* S(\Mmod)$, where $\Mmod$ corresponds to the unique pure Hodge module with strict support $X$ extending a polarized variation of Hodge structure on an open set $U \subseteq X$.

\noindent
{\bf Nadel vanishing.}
To deduce Nadel vanishing, one needs a more subtle relationship between 
multiplier ideals, the $V$-filtration on the structure sheaf, and the Hodge filtration on localizations, 
combining results of \cite{Budur_Saito} and \cite{Saito-HF}. As mentioned in Example \ref{localization}, this is one place 
where it is more convenient to have the initial discussion in terms of left $\Dmod$-modules.

Let $X$ be a smooth projective variety, and $D$ an effective Cartier divisor on $X$. Recall that $\shO_X (*D)$ is equipped with a natural Hodge filtration $F$, as the left $\Dmod$-module associated to the Hodge module $j_* \QQ^H_U \decal{n}$, where $j: U = X \smallsetminus D \hookrightarrow X$  is the inclusion; see Example \ref{localization}.
Looking at the first step in this filtration, one can recognize multiplier ideals from the formula
\begin{equation}\label{multiplier_formula}
F_0 \OX(\ast D) \simeq \mathcal{J} \bigl( (1-\eps) D \bigr) \cdot \OX(D),
\end{equation}
where $0 < \epsilon \ll 1$, and in general $\mathcal{J} (B)$ stands for the multiplier ideal of a $\QQ$-divisor $B$ (see 
\cite{Lazarsfeld}*{Ch.9}). Indeed, \cite{Saito-HF}*{Theorem~0.4} says that 
$$F_0 \OX(\ast D) \simeq {\tilde V}^1 \OX \cdot \OX(D),$$ 
while \cite{Budur_Saito}*{Theorem~0.1} says that for any $\alpha \in \QQ$ one has
$${\tilde V}^{\alpha} \OX  \simeq \mathcal{J} \bigl( (\alpha-\eps) D \bigr).$$
Here the $V$-filtration notation means the following: assume that $D$ is given locally by an equation $f$, and 
consider the graph embedding $i_f: X \rightarrow X \times \CC$. One can consider the $V$-filtration on the left $\Dmod$-module 
${i_f}_* \shO_X = \shO_X \otimes_{\CC} \CC[\partial_t]$ along $X_0 = X \times \{0\}$. The notation ${\tilde V}^\alpha \shO_X$ stands for the filtration induced on $\shO_X = \shO_X \otimes 1$.

This allows us to deduce the Nadel vanishing theorem (see e.g. \cite{Lazarsfeld}*{Theorem 9.4.8}), at least when $D$ is a Cartier divisor.

\begin{theorem}
With the notation above, if $L$ is a line bundle on $X$ such that $L - D$ is ample, then 
$$H^i \big(X, \omega_X \otimes L \otimes \mathcal{J} \bigl( (1-\eps) D \bigr)\big ) = 0 \,\,\,\, {\rm for~all~} i >0.$$
\end{theorem}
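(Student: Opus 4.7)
The plan is to apply the lowest-piece Saito vanishing from Corollary~\ref{lowest} to the mixed Hodge module $j_\ast \QQ^H_U \decal{n}$, where $j \colon U = X \smallsetminus D \hookrightarrow X$, whose underlying right $\Dmod_X$-module is $\omega_X(\ast D)$ equipped with its Hodge filtration. The corresponding left $\Dmod$-module is $\shO_X(\ast D)$, for which the first nonzero piece of the Hodge filtration is computed by the formula $(\ref{multiplier_formula})$:
\[
F_0 \shO_X(\ast D) \simeq \mathcal{J}\bigl((1-\eps)D\bigr) \cdot \shO_X(D).
\]

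Translating back to the right $\Dmod$-module side via the left-right rule $F_{p-n}\Mmod \simeq F_p \Nmod \otimes \omega_X$, and combining with the pole-order bound (\ref{eq:FP}) which forces $F_k \omega_X(\ast D) = 0$ for $k < -n$, I conclude that
\[
p(\omega_X(\ast D)) = -n
\quad\text{and}\quad
S\bigl(\omega_X(\ast D)\bigr) \;\simeq\; \omega_X(D) \otimes \mathcal{J}\bigl((1-\eps)D\bigr).
\]
Under the hypothesis that $L - D$ is ample, Corollary~\ref{lowest} applied to the Hodge module $j_\ast \QQ_U^H \decal{n}$ with ample line bundle $L \otimes \shO_X(-D)$ yields
\[
H^i\bigl(X,\, \omega_X(D) \otimes \mathcal{J}\bigl((1-\eps)D\bigr) \otimes L \otimes \shO_X(-D)\bigr) = 0 \quad\text{for all } i>0.
\]
Cancelling $\shO_X(\pm D)$ gives exactly the desired vanishing
\[
H^i\bigl(X, \omega_X \otimes L \otimes \mathcal{J}\bigl((1-\eps)D\bigr)\bigr) = 0 \quad\text{for } i > 0.
\]

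There is no real obstacle here: the entire content of the theorem is already packaged in the three inputs cited in the surrounding discussion (Corollary~\ref{lowest}, the identification of $F_0 \shO_X(\ast D)$ via the $V$-filtration from \cite{Saito-HF} and \cite{Budur_Saito}, and the left-right translation of filtrations). The only bookkeeping to verify carefully is the index shift between the left and right Hodge filtrations, so as to match $S(\omega_X(\ast D))$ with the multiplier ideal twisted by $\omega_X(D)$; once this identification is in place, the statement is an immediate application of Saito vanishing to the single ample twist $L - D$.
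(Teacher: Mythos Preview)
Your proof is correct and follows essentially the same approach as the paper: both identify $S(\omega_X(\ast D)) \simeq \omega_X(D) \otimes \mathcal{J}\bigl((1-\eps)D\bigr)$ via the formula for $F_0 \shO_X(\ast D)$ and the left-right rule, then apply Corollary~\ref{lowest} with the ample line bundle $A = L - D$. Your version is slightly more explicit about the bookkeeping (invoking \eqref{eq:FP} to pin down $p(\omega_X(\ast D)) = -n$), but the argument is the same.
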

\begin{proof}
From the discussion above it follows that for the left $\Dmod$-module $\shO_X(*D)$ the lowest graded piece for the filtration $F$ is 
$$\Gr_0^F \shO_X(*D) = \mathcal{J} \bigl( (1-\eps) D \bigr) \cdot \OX(D),$$
so for the associated right $\Dmod$-module we have 
$$S (\Mmod) = \omega_X \otimes \shO_X (D) \otimes \mathcal{J} \bigl( (1-\eps) D \bigr).$$
Corollary \ref{lowest} implies that if $A$ is ample, then 
$$H^i (X, \omega_X \otimes A \otimes \shO_X (D) \otimes \mathcal{J} \bigl( (1-\eps) D \bigr) = 0 \,\,\,\, {\rm for~all~} i >0.$$
But by assumption we can write $L \simeq A \otimes \shO_X (D)$ with $A$ ample.
\end{proof}

\begin{remark}[Arbitrary $\QQ$-divisors]
The Nadel vanishing theorem for arbitrary $\QQ$-divisors $B$ is not in general a vanishing theorem for the lowest graded piece of the Hodge filtration corresponding to a mixed Hodge module; it is however a consequence of the same result. Roughly speaking one can reduce to the situation studied above after performing  a Kawamata covering construction to arrive at a Cartier divisor, using a bistrictness property of Hodge modules which allows us to deduce vanishing for the push-forward to the base, and finally passing to an eigensheaf of the push-forward. In other words multiplier ideals are naturally direct summands of Hodge theoretic objects, while
Theorem \ref{saito_vanishing} also applies to filtered direct summands of $\Dmod$-modules underlying mixed Hodge modules, again 
since the functor  $\Gr_k^F \circ \DR$ is exact. I thank N. Budur for this observation.

On the other hand, it is perhaps most natural to try and prove an analogue of the Kawamata-Viehweg vanishing theorem for $\QQ$-divisors in the context of mixed Hodge modules. This will be done in Theorem 
\ref{kawamata_viehweg_MHM} below. An analogous extension of Nadel vanishing is then an immediate consequence; see Corollary \ref{nadel_vanishing_MHM}.
\end{remark}


\noindent
{\bf Abelian varieties.}
In the case of abelian varieties it turns out that  Theorem \ref{saito_vanishing} holds directly for the graded pieces of a filtered $\Dmod$-module $(\Mmod, F)$ underlying a Hodge module itself, rather than those of its de Rham complex.

\begin{proposition}[\cite{PS}*{Lemma 2.5}]\label{abelian}
Let $A$ be a complex abelian variety, $(\Mmod, F)$ the filtered $\Dmod$-module underlying a mixed Hodge module on
$A$, and let $L$ be an ample line bundle. Then for each $k \in \ZZ$, we have 
\[
        H^i \bigl( A, \Gr_k^F \Mmod \tensor L \bigr) = 0 \,\,\,\, {\rm for~all ~} i >0.
\]
\end{proposition}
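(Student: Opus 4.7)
The plan is induction on $k$, exploiting the triviality of $T_A$ on an abelian variety to transfer the Kodaira--Saito vanishing for the full graded de Rham complex into vanishing for each individual graded piece.

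First, since $A$ is an abelian variety, writing $V = H^0(A, T_A)$ one has $T_A \simeq V \tensor_{\CC} \shO_A$, so the graded de Rham complex (placed in degrees $-n, \dotsc, 0$)
\[
\Gr_k^F \DR_A(\Mmod) \simeq \Bigl\lbrack \textstyle\bigwedge^n V \tensor \Gr_{k-n}^F \Mmod \to \dotsb \to \Gr_k^F \Mmod \Bigr\rbrack
\]
has $p$th term (for $-n \le p \le 0$) equal to the free $\shO_A$-module $\bigwedge^{-p} V \tensor \Gr_{k+p}^F \Mmod$, with $\shO_A$-linear Koszul-type differentials. Next, I would invoke the hypercohomology spectral sequence
\[
E_1^{p,q} = \textstyle\bigwedge^{-p} V \tensor H^q\bigl(A, \Gr_{k+p}^F \Mmod \tensor L\bigr) \Longrightarrow H^{p+q}\bigl(A, \Gr_k^F \DR_A(\Mmod) \tensor L\bigr);
\]
by Theorem \ref{saito_vanishing}, the abutment vanishes whenever $p + q > 0$.

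The filtration $F_\bullet \Mmod$ is bounded below, so there is a smallest $k_0$ with $\Gr_{k_0}^F \Mmod \ne 0$ and the claim is vacuous for $k < k_0$. Assuming the statement for all $k' < k$, the inductive hypothesis forces $E_1^{p,q} = 0$ for $p < 0$ and $q > 0$. For $E_r^{0,q}$ with $q > 0$, any outgoing differential $d_r \colon E_r^{0,q} \to E_r^{r, q-r+1}$ has target at $p = r > 0$, hence outside the complex and zero; any incoming differential $d_r \colon E_r^{-r, q+r-1} \to E_r^{0,q}$ has source a subquotient of $E_1^{-r, q+r-1}$, which is zero by induction (since $k - r < k$ and $q + r - 1 \ge 1$). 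Thus $E_1^{0,q} = E_\infty^{0,q}$, and the latter is a subquotient of $H^q\bigl(A, \Gr_k^F \DR_A(\Mmod) \tensor L\bigr) = 0$. This gives $H^q(A, \Gr_k^F \Mmod \tensor L) = 0$ for $q > 0$ and closes the induction.

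The only real obstacle is bookkeeping in the spectral sequence; the essential geometric input is the triviality of $T_A$, which turns the Koszul differentials of $\Gr_k^F \DR_A(\Mmod)$ into maps of free $\shO_A$-modules and allows the inductive hypothesis to propagate across every column of the spectral sequence.
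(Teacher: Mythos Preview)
Your proof is correct and follows essentially the same approach as the paper: induction on $k$, using the triviality of $T_A$ together with Saito vanishing for $\Gr_k^F \DR_A(\Mmod)$. The only cosmetic difference is that the paper packages the induction via the distinguished triangle $E_k \to \Gr_k^F \DR_A(\Mmod) \to \Gr_{k+g}^F \Mmod \to E_k[1]$ coming from the stupid truncation (where $E_k$ has acyclic hypercohomology in positive degrees by the inductive hypothesis), whereas you run the equivalent hypercohomology spectral sequence directly.
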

\begin{proof}
Denote $g = \dim A$. Consider for each $k \in \ZZ$ the complex of coherent sheaves
\[
	\Gr_k^F \DR_A(\Mmod) = \Big\lbrack
		\Gr_k^F \Mmod \to \OmA{1} \tensor \Gr_{k+1}^F \Mmod \to \dotsb \to
			\OmA{g} \tensor \Gr_{k+g}^F \Mmod
	\Big\rbrack,
\]
supported in degrees $-g, \dotsc, 0$. According to Theorem~\ref{saito_vanishing},
this complex has the property that, for $i > 0$, 
\[
	H^i \bigl( A, \Gr_k^F \DR_A(\Mmod) \tensor L \bigr) = 0.
\]
Using the fact that $\OmA{1} \simeq \OA^{\oplus g}$, one can deduce the asserted
vanishing theorem for the individual sheaves $\Gr_k^F \Mmod$ by induction on $k$. Indeed, since
$\Gr_k^F \Mmod = 0$ for $k \ll 0$, inductively one has for each $k$ a distinguished
triangle 
\[	
	E_k \to \Gr_k^F \DR_A(\Mmod) \to \Gr_{k+g}^F \Mmod \to E_k[1],
\]
with $E_k$ an object satisfying $H^i ( A, E_k \tensor L) = 0$.
\end{proof}

This observation is one of the key points towards showing that, under the above assumptions, all graded pieces $ \Gr_k^F \Mmod$ satisfy the analogues of the generic vanishing theorems of \cite{GL1}, \cite{GL2}, \cite{Hacon}, \cite{PP}. 
In view of the examples in Section \ref{examples}, besides recovering these results the statement leads to new 
applications, for instance to Nakano-type generic vanishing (see \cite{PS}*{Theorem 1.2}).

\begin{theorem}[\cite{PS}, Theorem 1.1]
Under the assumptions of Proposition \ref{abelian}, each $\Gr_k^F \Mmod$ is a $GV$-sheaf on $A$, i.e.
$${\rm codim}_{{\rm Pic}^0 (A)}~\{\alpha \in {\rm Pic}^0 (A)~|~ H^i (A, \Gr_k^F \Mmod \otimes \alpha) \neq 0\} 
 \ge i, \,\,\,\, {\rm for~all} \,\,\,\, i \ge 0.$$
\end{theorem}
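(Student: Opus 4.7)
The plan is to combine Proposition~\ref{abelian} with the isogeny-based characterization of GV-sheaves due to Hacon and Pareschi--Popa. Recall that this criterion says: a coherent sheaf $\mathcal{F}$ on $A$ is GV if and only if, for every sufficiently positive ample line bundle $L$ on the dual abelian variety $\hat{A}$, the associated isogeny $\phi_L\colon \hat{A} \to A$ satisfies
\[
H^i(\hat{A}, \phi_L^* \mathcal{F} \otimes L) = 0 \quad \text{for all } i > 0.
\]
I would apply this criterion to $\mathcal{F} = \Gr_k^F \Mmod$.

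First I would verify that the isogeny $\phi_L$ is well-behaved with respect to the Hodge-theoretic data. Since $\phi_L$ is étale, it is smooth, so by Example~\ref{nonchar_example}(i) it is non-characteristic for $(\Mmod, F)$. The filtered inverse image $\phi_L^*(\Mmod, F)$ described after Definition~\ref{transverse} is therefore just the naive flat $\shO$-module pullback with its obvious filtration, and by Saito's theory it underlies a mixed Hodge module $\phi_L^* M$ on $\hat{A}$. Because $\phi_L$ is flat, pullback commutes with taking graded pieces, so
\[
\phi_L^* \Gr_k^F \Mmod \simeq \Gr_k^F(\phi_L^* \Mmod).
\]

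Second, I would apply Proposition~\ref{abelian} on $\hat{A}$ directly to the mixed Hodge module $\phi_L^* M$ and the ample line bundle $L$. This yields
\[
H^i\bigl(\hat{A}, \Gr_k^F(\phi_L^* \Mmod) \otimes L\bigr) = 0 \quad \text{for all } i > 0,
\]
which by the identification above is precisely the vanishing required by the Hacon/Pareschi--Popa criterion. Concluding that $\Gr_k^F \Mmod$ is GV is then immediate.

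The main obstacle is not really analytic but rather bookkeeping: one must be sure that $\phi_L^* M$ exists in $\MHM(\hat{A})$ with underlying filtered $\Dmod$-module equal to the naive pullback of $(\Mmod, F)$. This is guaranteed by the non-characteristic pullback formalism recalled in Section~\ref{V-filtration}, combined with the fact (Example~\ref{nonchar_example}(i)) that smooth morphisms, and in particular étale isogenies, are non-characteristic for every filtered $\Dmod$-module. Once this is in place, the proof reduces entirely to invoking Proposition~\ref{abelian} on the covering abelian variety $\hat{A}$.
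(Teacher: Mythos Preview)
Your proposal is correct and matches the approach the paper points to: the paper does not prove this theorem but defers to \cite{PS}, noting that Proposition~\ref{abelian} is ``one of the key points'' together with the cohomological criteria of \cite{Hacon} and \cite{PP}. Your argument---pulling back the mixed Hodge module along the \'etale isogeny $\phi_L$, using non-characteristic pullback to identify $\phi_L^*\Gr_k^F\Mmod \simeq \Gr_k^F(\phi_L^*\Mmod)$, and then invoking Proposition~\ref{abelian} on $\hat A$---is exactly how the proof in \cite{PS} proceeds.
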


A stronger generic vanishing statement  was proved in \cite{PS} for the total associated graded object 
$$\Gr^F_{\bullet} \Mmod = \bigoplus_k \Gr_k^F \Mmod,$$
seen as a coherent sheaf on $T^*A \simeq A \times H^0 (A, \Omega_A^1)$; this was useful in proving that all holomorphic $1$-forms on varieties of general type have zeros \cite{PS2}.

\subsection{Weak positivity}\label{weak_positivity}

This section contains a proof of an extension of Viehweg's weak positivity theorem for direct images of relative canonical sheaves, based on Theorem \ref{saito_vanishing} and found jointly with C. Schnell; see also \cite{Schnell2}. The general strategy follows Koll\'ar's approach to semipositivity via vanishing theorems in \cite{Kollar1}*{\S3}. The shortness of the proof is due to the fact that one can apply the machinery  of vanishing theorems to abstract Hodge modules.

\begin{definition}
A torsion-free coherent sheaf $\shF$ on a (quasi-)projective variety $X$ is \emph{weakly positive} on a non-empty open set $U \subseteq X$ if for every ample line bundle $A$ on $X$ and every $a\in \NN$, the sheaf ${\hat S}^{ab} \shF \otimes A^{\otimes b}$ is generated by global sections at each point of $U$ for $b$ sufficiently large. 
(Here ${\hat S}^p \shF$ denotes the reflexive hull of the symmetric power $S^p \shF$.)
\end{definition}

Before proving the main result, let's record a standard global generation consequence of Theorem \ref{saito_vanishing}.

\begin{corollary}\label{regularity}
Let $X$ be a smooth projective complex variety of dimension $n$, and $(\Mmod, F)$ a filtered $\Dmod$-module on $X$ 
underlying a mixed Hodge module $M$. Then for any ample and globally generated line bundle $L$ on $X$, the sheaf
$$ S (\Mmod) \otimes L^{\otimes (n+1)}$$
is globally generated.
\end{corollary}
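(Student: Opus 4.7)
The plan is to deduce this from Corollary \ref{lowest} via Castelnuovo--Mumford regularity. Recall that a coherent sheaf $\shF$ on $X$ is called $0$-regular with respect to a globally generated ample line bundle $L$ if
\[
H^i\bigl(X, \shF \otimes L^{\otimes(-i)}\bigr) = 0 \quad \text{for all } i > 0,
\]
and Mumford's regularity theorem then implies that $\shF$ is globally generated.

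I would apply this with $\shF = S(\Mmod) \otimes L^{\otimes(n+1)}$. The required vanishing becomes
\[
H^i\bigl(X, S(\Mmod) \otimes L^{\otimes(n+1-i)}\bigr) = 0 \quad \text{for all } i > 0.
\]
For $1 \le i \le n+1$, the exponent $n+1-i$ is $\geq 0$; when $i \leq n$ we have $n+1-i \geq 1$, so $L^{\otimes(n+1-i)}$ is ample (as a positive tensor power of an ample line bundle), and Corollary \ref{lowest} applied to $(\Mmod, F)$ and the ample line bundle $L^{\otimes(n+1-i)}$ gives the desired vanishing. For $i > n$, the cohomology $H^i(X, -)$ of any coherent sheaf vanishes by Grothendieck's bound on cohomological dimension. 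The only remaining case is $i = n+1$, which again falls under the Grothendieck vanishing since $\dim X = n$.

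Thus $S(\Mmod) \otimes L^{\otimes(n+1)}$ is $0$-regular with respect to $L$, and Mumford's theorem yields global generation. The only step that is not completely formal is the input from Corollary \ref{lowest}, but this is precisely the content of Saito vanishing applied to the lowest nonzero piece of the Hodge filtration; the rest is the classical regularity argument.
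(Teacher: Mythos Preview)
Your proof is correct and follows the same approach as the paper: set $\shF = S(\Mmod) \otimes L^{\otimes(n+1)}$, use Corollary~\ref{lowest} (plus Grothendieck vanishing for $i>n$) to verify $0$-regularity with respect to $L$, and conclude by the Castelnuovo--Mumford Lemma. The paper's version is terser but identical in substance.
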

\begin{proof}
Denoting $\shF =  S (\Mmod) \otimes L^{\otimes (n+1)}$, Corollary \ref{lowest} implies that 
$$H^i (X, \shF \otimes L^{\otimes -i}) = 0 \,\,\,\,{\rm for~all~} i >0.$$
The result is then an immediate consequence of the Castelnuovo-Mumford Lemma; see \cite{Lazarsfeld}*{Theorem 1.8.5}.
\end{proof}

We also need the following simplification of what is needed in order to check weak positivity under our hypotheses.

\begin{lemma}\label{reduction}
Let $\shF$ be a torsion-free sheaf on a smooth (quasi-)projective variety $X$, and $L$ a line bundle on $X$. Then $\shF$ is weakly positive on an open set $U \subseteq X$ on which $\shF$ is locally free if  
$\shF^{\otimes a} \otimes L$ is generated by global sections over $U$ for all $a>0$.
\end{lemma}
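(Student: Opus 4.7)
We need to show that for every ample line bundle $A$ on $X$ and every $a \in \NN$, the sheaf $\hat{S}^{ab} \shF \tensor A^{\tensor b}$ is generated by its global sections at each point of $U$ for $b \gg 0$. The plan is to combine the hypothesis (applied with exponent $ab$) with standard Serre-type global generation for ample line bundles.

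First, since $A$ is ample, there exists $b_0$ such that $A^{\tensor b} \tensor L^{-1}$ is globally generated on all of $X$ for every $b \ge b_0$. Next, the hypothesis applied with $a$ replaced by $ab$ gives that $\shF^{\tensor ab} \tensor L$ is globally generated over $U$ for every $b > 0$. On $U$ the sheaf $\shF$ is locally free, so the natural symmetrization map $\shF^{\tensor ab} \onto S^{ab} \shF$ is surjective there, and therefore $S^{ab} \shF \tensor L$ is also generated over $U$ by global sections of $X$. Moreover, since $S^{ab} \shF$ is locally free (hence reflexive) on $U$, the canonical morphism $S^{ab} \shF \to \hat{S}^{ab} \shF$ is an isomorphism over $U$, so $\hat{S}^{ab} \shF \tensor L$ is generated over $U$ by global sections of $X$.

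Finally, the tensor product of a sheaf generated by global sections of $X$ at every point of $U$ with a sheaf globally generated on all of $X$ is again generated by global sections at every point of $U$. Applying this to
\[
	\hat{S}^{ab} \shF \tensor A^{\tensor b} \simeq \bigl( \hat{S}^{ab} \shF \tensor L \bigr) \tensor \bigl( A^{\tensor b} \tensor L^{-1} \bigr)
\]
for $b \ge b_0$ yields the required global generation and hence weak positivity on $U$.

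The argument is essentially formal; the only point to check carefully is the identification $\hat{S}^{ab}\shF \vert_U = S^{ab}\shF\vert_U$ and the fact that surjections and tensor products preserve the property of being generated at a point by global sections of the ambient $X$. There is no serious obstacle.
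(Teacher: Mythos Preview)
Your proof is correct and follows essentially the same route as the paper's: reduce from the reflexive hull to the ordinary symmetric power (using that $\shF$ is locally free on $U$), then to the tensor power via the surjective symmetrization map, and handle the passage from the fixed line bundle $L$ to an arbitrary ample $A$ by absorbing $L^{-1}$ into a high power of $A$. The paper merely sketches this and cites Viehweg for the ``one line bundle suffices'' step, whereas you spell that step out explicitly via global generation of $A^{\otimes b}\otimes L^{-1}$; otherwise the arguments coincide.
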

\begin{proof}
This is well known, so I will only sketch the proof. First, it is standard that one can reduce to checking the definition for only one (not necessarily ample) line bundle $L$, and all $a >0$; see \cite{Viehweg1}*{Remark 1.3(ii)}. Now a  torsion-free sheaf is locally free and therefore coincides with its reflexive hull outside of a closed set of codimension at least $2$. 
On the other hand, its  global sections inject into those of the reflexive hull. So it is enough to reduce the definition to the usual symmetric powers, which in turn are quotients of the tensor powers. 
\end{proof}

Viehweg's theorem in \cite{Viehweg1} saying that $f_*\omega_{Z/X}$ is weakly positive for any 
surjective morphism 
$f: Z \rightarrow X$ of smooth projective varieties is a special case of the following result.\footnote{We apply
it to the left $\Dmod$-modules $\Nmod_i$ corresponding to $\Mmod_i$ in the decomposition 
$f_* (\omega_Z, F)  \simeq \bigoplus_i (\Mmod_i, F) [-i]$; see Example \ref{direct_image}.}
The statement and proof are more conveniently phrased in terms of left $\Dmod$-modules.

\begin{theorem}\label{viehweg}
Let $X$ be a smooth projective complex variety, and $(\Nmod, F)$ the filtered \emph{left} $\Dmod$-module on $X$ underlying a 
mixed Hodge module $M$  which is a variation of mixed Hodge structure on a non-empty open set $U\subseteq X$. Then $S(\Nmod)$ is weakly positive over $U$.
\end{theorem}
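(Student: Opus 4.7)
The plan is to mimic Koll\'ar's proof of weak positivity for $f_*\omega_{Z/X}$, substituting the fiber-product construction by a Hodge-module extension of a tensor power of a variation of Hodge structure, and using Corollary \ref{regularity} (the Castelnuovo--Mumford consequence of Saito vanishing) in place of classical Kodaira vanishing. First I would fix an ample and globally generated line bundle $L_0$ on $X$ and set $L = \omega_X \otimes L_0^{\otimes(n+1)}$. By Lemma \ref{reduction} it then suffices to show that for every integer $a\ge 1$ the sheaf $S(\Nmod)^{\otimes a} \otimes L$ is generated by global sections at each point of $U$.

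Write $\cV = M\vert_U$, which by hypothesis is a variation of mixed Hodge structure on $U$. The $a$-fold tensor power $\cV^{\otimes a}$ is again a VMHS on $U$, and by Saito's theory (Theorem \ref{structure} in the pure case and its mixed analogue) it extends uniquely to a mixed Hodge module $M^{(a)}$ on $X$, with underlying filtered left $\Dmod$-module $(\Nmod^{(a)}, F)$. Since the Hodge filtration on a tensor product of variations is the convolution of the individual filtrations, the lowest Hodge piece of $\cV^{\otimes a}$ equals $S(\cV)^{\otimes a}$, and hence
\[
 S(\Nmod^{(a)})\vert_U \;=\; S(\Nmod)^{\otimes a}\vert_U.
\]
Applying Corollary \ref{regularity} to $M^{(a)}$ then yields that $S(\Nmod^{(a)}) \otimes L$ is generated by its global sections on all of $X$. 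The crucial feature is that the twist $L$ does not depend on $a$, which is exactly the shape demanded by Lemma \ref{reduction}.

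To finish I would produce a natural morphism of coherent sheaves on $X$ between $S(\Nmod^{(a)})$ and $S(\Nmod)^{\otimes a}$ extending the canonical identification on $U$; such a map should come from the universal property of the intermediate extension of $\cV^{\otimes a}$ together with the functoriality of the lowest Hodge piece. Granted this comparison, global sections of $S(\Nmod^{(a)}) \otimes L$ on $X$ push forward to sections of $S(\Nmod)^{\otimes a} \otimes L$ that generate the stalks at every point of $U$, and the theorem follows from Lemma \ref{reduction}. The hardest step is exactly this last comparison: outside of $U$ the two sheaves $S(\Nmod^{(a)})$ and $S(\Nmod)^{\otimes a}$ can differ substantially and a direct argument is unavailable, so one must rely on the Hodge-theoretic structure (polarizability, strictness of direct images, and the characterization of the intermediate extension) to control the difference. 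All of the remaining ingredients — Saito vanishing, strictness, and the Castelnuovo--Mumford regularity statement — are already in place from the earlier sections, which is why the argument is short once these tools are available.
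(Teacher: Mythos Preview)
Your overall strategy is exactly the paper's: build an auxiliary mixed Hodge module whose lowest Hodge piece admits a map to $S(\Nmod)^{\otimes a}$ that is an isomorphism over $U$, apply Corollary~\ref{regularity}, and conclude via Lemma~\ref{reduction}. The gap you yourself flag --- producing the comparison morphism --- is the only real content, and your proposed fix does not work as stated. The phrase ``extends uniquely'' is already problematic: for a \emph{mixed} Hodge module there is no unique extension of $\cV^{\otimes a}$ from $U$ to $X$ (one has $j_!$, $j_{!*}$, $j_*$, and Theorem~\ref{structure} concerns pure modules with strict support). More seriously, even if you pick a specific extension $M^{(a)}$, the ``universal property of the intermediate extension'' operates at the level of Hodge modules or perverse sheaves, not coherent sheaves; it gives no direct handle on a map $S(\Nmod^{(a)}) \to S(\Nmod)^{\otimes a}$, since the target is not the lowest piece of any Hodge module on $X$.

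The paper replaces your abstract $M^{(a)}$ by a concrete object: the exterior product $M^{\boxtimes m}$ on $X^m$, pulled back along the diagonal $i:X\hookrightarrow X^m$ (codimension $r=(m-1)n$). On the coherent side one has the honest identity $i^* S(\Nmod^{\boxtimes m}) = S(\Nmod)^{\otimes m}$. The comparison morphism then comes from a general lemma of Schnell (\cite{Schnell1}*{Lemma 2.17}): for a closed embedding there is a natural map
\[
F_{m\cdot p(\Nmod)-r}\, i^{!}(\Nmod^{\boxtimes m},F) \longrightarrow \mathbf{L}i^* S(\Nmod^{\boxtimes m})[-r]
\]
in the derived category of coherent sheaves on $X$, which is an isomorphism over the locus where $M^{\boxtimes m}$ is a variation (hence over $U$). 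Taking cohomology in degree $r$ yields exactly the sheaf morphism $S(\mathcal{Q})\to S(\Nmod)^{\otimes m}$ you want, with $(\mathcal{Q},F)$ underlying $i^*M^{\boxtimes m}$ in $\MHM(X)$. Everything after that is as you wrote. So the missing idea is: do not try to extend $\cV^{\otimes a}$ abstractly; realize it as the restriction of an exterior product and use the diagonal, where the comparison between Hodge-module pullback $i^!$ and coherent pullback $\mathbf{L}i^*$ on the lowest piece is already available.
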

\begin{proof}
\emph{Step 1.} 
First, as $M$ is a variation of mixed Hodge structure generically over $X$, it is well known that $S(\Nmod)$ is a torsion-free sheaf on $X$.
Fix now a positive integer $m$, and consider the diagonal embedding 
$$i : X \hookrightarrow X \times \cdots \times X,$$
where the product is taken $m$ times. On this product, consider the box product mixed Hodge module 
$$M^{\boxtimes m} : = M\boxtimes \cdots \boxtimes M.$$
As the filtration on $M^{\boxtimes m}$ is the convolution of the filtrations on the individual factors, it is not hard to see that 
$p (\Nmod^{\boxtimes m}) = m \cdot p(\Nmod)$ and moreover
$$i^* S(\Nmod^{\boxtimes m}) = S(\Nmod)^{\otimes m}.$$
Denoting by $r = (m-1) n$ the codimension of $X$ via the diagonal embedding, in the derived category of coherent sheaves on $X$ 
we have a natural morphism
\begin{equation}\label{derived_restriction}
F_{m \cdot p(\Nmod) - r} ~ i^{!}  (\Nmod^{\boxtimes m}, F) \longrightarrow {\bf L} i^* S(\Nmod^{\boxtimes m}) [-r],
\end{equation}
which is an isomorphism over the open set $U$ where $M$ is a variation of mixed Hodge structure. 
This follows for instance from \cite{Schnell1}*{Lemma 2.17} (see also \cite{Schnell2}*{Lemma 3.2}).

\noindent
\emph{Step 2.} We can specialize formula (\ref{derived_restriction}) by passing to the cohomology sheaves in degree $r$, in order to obtain a natural sheaf homomorphism
\begin{equation}\label{restriction}
S(\mathcal{Q})= F_{m \cdot p(\Nmod) - r} ~\mathcal{Q} \longrightarrow  S(\Nmod)^{\otimes m}
\end{equation} 
which is an isomorphism on $U$; here $(\mathcal{Q}, F)$ is another filtered left $\Dmod$-module on $X$, underlying
the object $i^* M^{\boxtimes m}$ in MHM($X$).

Fix now a very ample line bundle $L$ on $X$. In order to deduce that $S(\Nmod)$ is weakly positive over $U$, using Lemma \ref{reduction} 
it suffices then to show that $S(\mathcal{Q}) \otimes \omega_X \otimes L^{\otimes(n+1)}$ is globally generated, where $n = \dim X$. But this a consequence of Corollary \ref{regularity}, recalling that $S(\mathcal{Q}) \otimes \omega_X$ is the lowest non-zero graded piece of the right 
$\Dmod$-module associated to $\mathcal{Q}$.
\end{proof}

\begin{remark}
A more general result, involving kernels of Kodaira-Spencer morphisms associated to the de Rham complex of $\Mmod$, was recently 
proved in \cite{PW}. The method of proof is however different, and does not rely on vanishing theorems.
\end{remark}

In \cite{Viehweg1}, Viehweg proved that if $f: Z \rightarrow X$ is a surjective morphism of smooth projective varieties, then $f_*\omega_{Z/X}^{\otimes m}$ is weakly positive for $m \ge 2$ as well. A natural question to ask in this direction is the following:

\begin{question}
Let $f:Z \rightarrow X$ be a surjective morphism of smooth projective varieties, 
and $(\Mmod, F)$ the filtered left $\Dmod$-module underlying a 
mixed Hodge module $M$ which is a variation of mixed Hodge structure on a non-empty open set 
in $Z$. Is
$$f_* \left(S(\Nmod) \otimes \omega_{Z/X}^{\otimes m} \right)$$
weakly positive for all $m\ge 1$?
\end{question}

Assuming a positive answer to this question, the exact same method of proof as in Theorem \ref{viehweg} would 
imply for all $m \ge 2$ the weak positivity of 
$$f_* \left(S(\Nmod)^{\otimes m}  \otimes \omega_{Z/X}^{\otimes m} \right).$$
It is worth noting that it is indeed now possible to give a proof of Viehweg's statement on $f_*\omega_{Z/X}^{\otimes m}$
using cohomological methods \`a la Koll\'ar; see \cite{PS3}.

\subsection{Kawamata-Viehweg-type vanishing}\label{kawamata_viehweg}
In this section I will show that the Kawamata-Viehweg vanishing theorem for $\QQ$-divisors continues to hold for the lowest graded piece of a mixed Hodge module as long as its singular locus does not intersect the augmented base locus 
${\bf B}_{+} (L)$ of a big and nef line bundle (in particular always for variations of mixed Hodge structure). 
The proof follows quite closely the original one, with modifications permitted by Saito's study of non-characteristic pullbacks. I expect a stronger version to hold, at least under certain non-characteristicity hypotheses with respect to ${\bf B}_{+} (L)$.\footnote{Added during revision: since this was written, in the case when $L$ is a big and nef line bundle the most general version of Kawamata-Viehweg-type vanishing was proved by Suh \cite{Suh} and Wu \cite{Wu}. Further results for $\QQ$-divisors were also obtained in \cite{Wu}.}

\begin{theorem}\label{kawamata_viehweg_MHM}
Let $(\Mmod, F)$ be the filtered right $\Dmod$-module underlying a mixed Hodge module $M$ on a smooth projective variety $X$, and let $L$ be a line bundle on $X$ with $L\sim_{\QQ} A + \Delta$, where $A$ is a big and nef $\QQ$-divisor on $X$ and $(X, \Delta)$ is a klt pair.  Assume that $\B+ (A)\cup {\rm Supp} ~\Delta$ is contained in the smooth locus of $M$. Then
$$H^i (X, S(\Mmod) \otimes L) = 0 \,\,\,\, {\rm for~all~} i >0.$$
\end{theorem}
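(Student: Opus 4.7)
The plan is to mirror the classical Kawamata-Viehweg proof, with Kodaira-Saito vanishing (Corollary~\ref{lowest}) taking the place of Kodaira vanishing at the top of a tower of covers. The non-characteristicity hypothesis on $\B+(A) \cup \Supp \Delta$ is precisely what makes each birational and ramified operation in that classical proof non-characteristic for $(\Mmod, F)$ in the sense of Definition~\ref{transverse} (compare Example~\ref{nonchar_example}), so that the Hodge-module pullbacks remain meaningful throughout.

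First I would apply Kodaira's lemma for big and nef divisors: for a sufficiently small rational $\epsilon > 0$, write $A \sim_\QQ A_\epsilon + \epsilon N$ with $A_\epsilon$ ample and $N$ effective with $\Supp N \subseteq \B+(A)$; then $B := \Delta + \epsilon N$ is still klt, $\Supp B$ lies in the smooth locus of $M$, and $L \sim_\QQ A_\epsilon + B$. Next, take a log resolution $\mu \colon X' \to X$ of $(X, B)$ that is an isomorphism over $X \setminus \Supp B$; this map is an isomorphism over the singular locus of $M$, hence non-characteristic for $(\Mmod, F)$, so $\mu^*(\Mmod, F)$ underlies the Hodge module $\mu^*M$ on $X'$. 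Writing $\mu^*B = \sum b_i D_i$ with $\sum D_i$ an SNC divisor, let $m$ be a common denominator for the $b_i$ and for the coefficients of $A_\epsilon$ on $X'$, and apply Kawamata's covering lemma to obtain a finite flat cover $\pi \colon Y \to X'$ with $Y$ smooth, \'etale over $X' \setminus \bigcup D_i$, such that $m \pi^*A_\epsilon$ is an integral ample Cartier divisor and $\pi^*B$ is $m$-divisible. Since $\pi$ is \'etale over the singular locus of $\mu^*M$, it remains non-characteristic, and $\pi^*\mu^*(\Mmod, F)$ underlies a Hodge module on $Y$.

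A cyclic cover $f \colon Z \to Y$ of degree $m$ as in Proposition~\ref{cyclic}, branched along the appropriate integral divisor, produces a smooth $Z$ on which $f^*\pi^*\mu^*L$ differs from an integral ample line bundle only by a twist coming from the non-characteristic pullback formula and the relative canonical divisors. Corollary~\ref{lowest} applied to the Hodge module $f^*\pi^*\mu^*M$ tensored with this ample bundle yields $H^i$-vanishing for $i > 0$ of the appropriate lowest Hodge piece on $Z$. The vanishing then descends: at each finite-cover stage, $f_*\shf{O}_Z$ and $\pi_*\OY$ decompose into eigensheaves of the Galois action, and the desired sheaf on the base appears as a direct summand of the pushforward of the lowest Hodge piece upstairs, so the vanishing passes down. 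At the log-resolution stage, the projection formula, Saito's Koll\'ar-type vanishing for $R^j\mu_*S(\mu^*\Mmod)$ (an instance of the results advertised in Example~\ref{direct_image}), and the identity $\mu_*S(\mu^*\Mmod) = S(\Mmod)$ arising from the non-characteristic pullback formula together with $\mu_*\omega_{X'/X} = \OX$ convert the vanishing on $X'$ into the statement on $X$.

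The main obstacle is the bookkeeping of the lowest Hodge piece $S$ through these successive non-characteristic pullbacks and Galois decompositions: one must verify that after accounting for all the twists by relative canonical bundles and ramification divisors, the ample line bundle on $Z$ descends to exactly $L$ on $X$, rather than to $L$ corrected by an unwanted contribution from $\omega_{X'/X}$ or from the ramification locus. Once this is in place, the argument is essentially a transposition of the classical Kawamata-Viehweg strategy into Saito's framework.
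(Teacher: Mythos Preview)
Your outline has a genuine gap at the passage from $X$ to the log resolution $X'$. The divisor $A_\epsilon$ is ample on $X$, but $\mu^*A_\epsilon$ is only big and nef on $X'$ (it is numerically trivial on the exceptional fibers), and a further finite cover $\pi\colon Y\to X'$ does not change this: $\pi^*\mu^*A_\epsilon$ is still only big and nef. So the assertion that ``$m\pi^*A_\epsilon$ is an integral ample Cartier divisor'' is false, and you cannot invoke Corollary~\ref{lowest} at the top of your tower. The extra cyclic cover in your step~4 does not repair this, and in fact plays no clear role --- once a Kawamata cover has made the boundary integral, there is nothing for a further cyclic cover to do.

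The paper confronts exactly this issue by organizing the argument differently. It first proves the theorem in the special case of a big and nef \emph{line bundle} with $\Delta=0$ (Steps~1--3): a Norimatsu-type lemma shows that $S(\Mmod)\otimes A\otimes\shO_X(E)$ has vanishing higher cohomology when $A$ is ample and $E$ is reduced SNC in the smooth locus of $M$; then a log resolution of the linear series $|kL|$ together with the negativity lemma produces on the resolution a decomposition of $\mu^*L^{\otimes k}$ as (ample) $+$ (SNC effective in the smooth locus of $\mu^*M$), and a Kawamata cover reduces the multiplicities. Only after the big and nef line bundle case is in hand does the paper treat the general $\QQ$-divisor statement (Steps~4--5), log-resolving the pair and then peeling off the components of $\Delta'$ one at a time by induction, each step being a single Kawamata cover reducing to the case with one fewer component; the base case $\Delta'=0$ is precisely the big and nef line bundle case. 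Your approach can be salvaged by inserting the negativity-lemma step after the log resolution (subtract a small effective exceptional $\QQ$-divisor from $\mu^*A_\epsilon$ to make it ample and absorb that divisor into the boundary), but as written the ampleness claim is unjustified.
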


\begin{remark}
In particular we have the vanishing above if $L$ is a big and nef line bundle such that $\B+ (L)$ is contained in the 
smooth locus of $M$.
Note that one does not have a similar statement for other associated graded pieces $\Gr_k^F \DR (\Mmod)$ of the filtered de Rham complex, as in the case of Kodaira-Saito vanishing. This is already well known for the trivial Hodge module  
$M = \QQ_X^H[n]$. In this case, by Example \ref{basic} the graded pieces are $\Omega^k_X [n-k]$ with
$n = \dim X$. Simple examples show however that for $k < n$ the Nakano extension of Kodaira vanishing does not usually hold for twists by big and nef line bundles; see \cite{Lazarsfeld}*{Example 4.3.4}.
\end{remark}

In order to understand the statement and proof, we need to review a few more definitions and results. Before doing this, let's note that an immediate consequence of the theorem above is the following generalization of the Nadel vanishing theorem; see also Section \ref{particular_cases}.

\begin{corollary}\label{nadel_vanishing_MHM}
Let $X$ be a smooth projective variety, and $D$ an effective $\QQ$-divisor on $X$ with associated multiplier ideal $\I (D)$. Let $L$ be a line bundle in $X$ such that $L - D$ is big and nef, and assume that 
$\B+( L - D) \cup {\rm Supp}~D$ is contained in the smooth locus of a mixed Hodge module $M$ with underlying filtered $\Dmod$-module $(\Mmod, F)$. Then
$$H^i \big(X, S(\Mmod) \otimes L \otimes \I (D) \big) = 0 \,\,\,\,{\rm for~all~} i >0.$$ 
\end{corollary}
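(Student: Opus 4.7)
My plan is to reduce to Theorem \ref{kawamata_viehweg_MHM} via a log resolution, mirroring the classical derivation of Nadel vanishing from Kawamata--Viehweg with the lowest Hodge piece $S(\Mmod)$ playing the role of $\omega_X$.

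I would first choose a log resolution $\mu\colon Y\to X$ of $(X,D)$ that is an isomorphism over $X\smallsetminus\Supp D$. Since $\Supp D$ sits inside the smooth locus $V$ of $M$ and $\mu$ is biholomorphic outside $\Supp D$, Example \ref{nonchar_example} shows $\mu$ is non-characteristic for $(\Mmod,F)$, and the pullback construction after Definition \ref{transverse} produces a pure Hodge module $\mu^{*}M$ on $Y$. As the relative dimension is zero, the explicit formula there gives
$$S(\mu^{*}\Mmod)\;\simeq\;\mu^{*}S(\Mmod)\otimes\omega_{Y/X}.$$
Writing $\mu^{*}D=\lfloor\mu^{*}D\rfloor+\Delta$ with $\Delta:=\{\mu^{*}D\}$ makes $(Y,\Delta)$ klt, and the line bundle $L':=\mu^{*}L\otimes\OY(-\lfloor\mu^{*}D\rfloor)$ satisfies $L'\sim_{\QQ}\mu^{*}(L-D)+\Delta=A+\Delta$ with $A$ big and nef.

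The hypothesis of Theorem \ref{kawamata_viehweg_MHM} is readily checked on $Y$: both $\BB_{+}(A)\subseteq\mu^{-1}\BB_{+}(L-D)\cup\mathrm{Exc}(\mu)$ and $\Supp\Delta\subseteq\mu^{-1}\Supp D$ are contained in $\mu^{-1}(V)$, which in turn lies in the smooth locus of $\mu^{*}M$. Applying Theorem \ref{kawamata_viehweg_MHM} to $L'$, and (for later use) to $L'\otimes\mu^{*}A'$ for arbitrary ample $A'$ on $X$ (ampleness only shrinks $\BB_{+}$, so the same hypothesis persists), yields
$$H^{i}\bigl(Y,\,S(\mu^{*}\Mmod)\otimes L'\otimes\mu^{*}A'\bigr)=0\quad\text{for all } i>0.$$

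To descend to $X$, the projection formula together with the defining formula for the multiplier ideal gives
$$\mu_{*}\bigl(S(\mu^{*}\Mmod)\otimes L'\bigr)\simeq S(\Mmod)\otimes L\otimes\mu_{*}\OY\bigl(K_{Y/X}-\lfloor\mu^{*}D\rfloor\bigr)=S(\Mmod)\otimes L\otimes\I(D),$$
and the local vanishing $R^{q}\mu_{*}\bigl(S(\mu^{*}\Mmod)\otimes L'\bigr)=0$ for $q>0$ follows from the twisted vanishings by a standard Serre-type argument: after projection formula and the Leray spectral sequence, the coherent sheaves $R^{q}\mu_{*}(\cdots)\otimes A'$ have no global sections for every sufficiently ample $A'$ on $X$, so they must vanish. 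Leray then collapses to give $H^{i}(X,S(\Mmod)\otimes L\otimes\I(D))=0$. The main obstacle is Hodge-theoretic rather than birational: one must verify the pullback identification $S(\mu^{*}\Mmod)=\mu^{*}S(\Mmod)\otimes\omega_{Y/X}$ and the inclusion of $\mu^{-1}(V)$ into the smooth locus of $\mu^{*}M$; once these are in hand, the descent is exactly the classical one.
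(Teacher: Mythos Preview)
Your argument is correct and is exactly the classical derivation of Nadel from Kawamata--Viehweg, transported to the Hodge module setting; the paper does not spell out a proof and simply declares the corollary an ``immediate consequence'' of Theorem~\ref{kawamata_viehweg_MHM}, so your write-up is in fact more detailed than what the paper provides. Two small remarks: first, the projection formula step uses that $S(\Mmod)$ is locally free near $\Supp D$ (it lies in the smooth locus of $M$) and that $\mu$ is an isomorphism elsewhere, which you should make explicit; second, your Serre-type argument for $R^q\mu_{*}=0$ is fine, but one can shortcut it by observing that near the exceptional locus the projection formula gives $R^q\mu_{*}\bigl(S(\mu^{*}\Mmod)\otimes L'\bigr)\simeq S(\Mmod)\otimes L\otimes R^q\mu_{*}\shO_Y\bigl(K_{Y/X}-\lfloor\mu^{*}D\rfloor\bigr)$, and the last factor vanishes by the classical local vanishing theorem for multiplier ideals.
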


\noindent
{\bf Higher direct images of the lowest Hodge piece.}
Let $X$ be a smooth variety.  Recall that according to M. Saito's theory \cite{Saito-MHM}, for a mixed Hodge module $M$ with strict support equal to $X$, there exists a maximal non-empty open set $U \subseteq X$ on which $M$ is variation of mixed Hodge structure, denoted say by $\V$; we call this the \emph{smooth locus} of $M$. Note that the lowest
Hodge piece $S(\Mmod)$ is a locally free sheaf on $U$.

As the functor $S(\cdot)$ is exact, we can often restrict our study to the case when $M$ is a pure Hodge module
which is a polarized variation of Hodge structure on $U$. In this case, in response to a
 conjecture of Koll\'ar, Saito proved (among other things) the following, the second part of which 
 can be seen as a generalization of the Grauert-Riemenschneider vanishing theorem. 
 
 \begin{theorem}[Saito, \cite{Saito-Kollar}]\label{saito_GR}
 Let $f: X \rightarrow Y$ be  a surjective projective morphism (with $Y$ possibly singular), and let $(\Mmod, F)$ be the 
filtered $\Dmod$-module underlying a pure Hodge module with strict support $X$ that is generically a polarized variation of Hodge structure $\V$. For each $i\ge 0$, one has 
 $$R^i f_* S(\Mmod) = S(Y, \V^i),$$
 the lowest Hodge piece of the variation of Hodge structure $\V^i$ on the intersection cohomology of $\V$ along
 the fibers of $f$. Consequently, $R^i f_* S(\Mmod)$ are torsion-free, and in particular 
 $$R^i f_* S(\Mmod) = 0 \,\,\,\,{\rm for~} i > \dim X - \dim Y.$$
 \end{theorem}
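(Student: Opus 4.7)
The plan is to combine Saito's Decomposition Theorem with the strictness of the filtered direct image to identify each $R^i f_* S(\Mmod)$ with a single graded piece of $H^i f_+ \Mmod$, and then to use the strict support decomposition together with a codimension induction to isolate the contribution of the intersection cohomology variation $\V^i$.

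First I would embed $Y$ in a smooth ambient variety using the Hodge-module extension of Kashiwara's equivalence (\ref{kashiwara}), so that Saito's machinery is directly available. The Decomposition Theorem \ref{decomposition} then gives, in the filtered derived category,
\[
    f_+ (\Mmod, F) \simeq \bigoplus_{i} (\Mmod_i, F) \decal{-i},
\]
with each $(\Mmod_i, F)$ underlying a pure Hodge module $M_i = H^i f_+ M$ on $Y$. The strictness built into Theorem \ref{stability}(i), together with formula (\ref{lowest_decomposition}), yields the key identification
\[
    R^i f_* S(\Mmod) \simeq F_{p(\Mmod)} \Mmod_i.
\]
The strict-support decomposition (\ref{support_decomposition}) then splits $M_i = M_i^Y \oplus \bigoplus_{Z \subsetneq Y} M_i^Z$, and restriction to a dense open $U \subseteq Y$ over which $f$ is smooth and $\V$ is a polarized variation of Hodge structure identifies $M_i^Y \restr{U}$ with the classical variation $\V^i$ on intersection cohomology along the fibers, by the standard Hodge theory of smooth projective morphisms. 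By uniqueness of extension in Theorem \ref{structure}, $M_i^Y$ is the pure Hodge module extension of $\V^i$, so that its contribution to $R^i f_* S(\Mmod)$ is exactly $S(Y, \V^i)$.

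The main technical obstacle is to rule out contributions from the strict support components $M_i^Z$ with $Z \subsetneq Y$, i.e.\ to show $F_{p(\Mmod)} \Mmod_i^Z = 0$. The cleanest route is to prove torsion-freeness of $R^i f_* S(\Mmod)$ by induction on $\dim Y$. For a general hyperplane section $H \subset Y$ chosen to be non-characteristic for each $M_i$ (possible by Example \ref{nonchar_example} together with a Bertini-type argument arranging transversality to the strata where the $M_i$ fail to be variations of Hodge structure), the non-characteristic pullback formalism of \S\ref{V-filtration} is compatible with the functor $S(\argbl)$ and with filtered direct images. One may then compare $(R^i f_* S(\Mmod)) \restr{H}$ with the analogous direct image on $f^{-1}(H)$, apply the inductive hypothesis, and conclude that any torsion of $R^i f_* S(\Mmod)$ supported along $H$ must vanish. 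Varying $H$ rules out torsion altogether, so agreement with $S(Y, \V^i)$ on the dense open $U$ extends globally to the claimed identification.

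The stated consequences are then immediate: torsion-freeness has been established, while the vanishing $R^i f_* S(\Mmod) = 0$ for $i > \dim X - \dim Y$ reduces to the vanishing of the Hodge module $M_i$ in that range, which follows from Theorem \ref{stability} together with the relative hard Lefschetz symmetry $M_{-i} \simeq M_i(i)$ for a projective morphism of relative dimension $\dim X - \dim Y$.
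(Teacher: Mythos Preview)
The paper does not prove this theorem; it is quoted from Saito's work \cite{Saito-Kollar} and used as a black box in the proof of Theorem~\ref{kawamata_viehweg_MHM}. So there is no in-paper argument to compare against. That said, your overall skeleton---Decomposition Theorem, strictness giving $R^i f_* S(\Mmod)\simeq F_{p(\Mmod)}\Mmod_i$, then the strict-support decomposition and identification of the $Y$-supported summand with the extension of $\V^i$---is the right shape and is essentially how Saito begins. Two of your steps, however, have genuine gaps.

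First, the hyperplane induction for torsion-freeness is not complete. You assert that for non-characteristic $H$ the functor $S(\argbl)$ and the filtered direct image commute with restriction, so that $(R^i f_* S(\Mmod))\restr{H}$ coincides with the analogous direct image over $H$; this base-change compatibility is true in Saito's framework but is itself a nontrivial input, not something one can invoke by fiat. More seriously, even granting it, the logic ``restriction to every general $H$ is torsion-free, hence no torsion'' is not justified: if $T\subset R^i f_* S(\Mmod)$ is the torsion subsheaf, you have not controlled the map $T\restr{H}\to (R^i f_* S(\Mmod))\restr{H}$ (there may be $\shTor_1$ contributions from the quotient), nor argued why $T\restr{H}=0$ for generic $H$ forces $T=0$. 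In Saito's actual proof the vanishing $F_{p(\Mmod)}\Mmod_i^Z=0$ for $Z\subsetneq Y$ is obtained by a direct Hodge-level argument showing $p(\Mmod_i^Z)>p(\Mmod)$, using the compatibility of the Hodge filtration with nearby and vanishing cycles built into the inductive definition of Hodge modules; this is where the real content lies, and your induction does not substitute for it.

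Second, your deduction of the vanishing $R^i f_* S(\Mmod)=0$ for $i>\dim X-\dim Y$ is incorrect. Relative hard Lefschetz gives the symmetry $M_{-i}\simeq M_i(i)$ but says nothing about the range in which $M_i$ can be nonzero; since special fibers of $f$ may have dimension strictly larger than $\dim X-\dim Y$, the Hodge module $M_i$ need not vanish for $i$ in that range. The correct deduction, already signaled by the phrasing ``Consequently\dots in particular'' in the statement, is from torsion-freeness: for $i>\dim X-\dim Y$ the coherent sheaf $R^i f_* S(\Mmod)$ is supported on the proper closed locus where the fiber dimension exceeds $\dim X-\dim Y$, and a torsion-free coherent sheaf supported on a proper closed subset of an irreducible variety is zero.
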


\noindent
{\bf Augmented base loci.}
We start by recalling the definition and some basic results on augmented base loci of divisors.

\begin{definition}[\cite{ELMNP}*{\S1}]\label{augmented}
Let $D$ be a $\QQ$-divisor on a normal complex projective variety $X$. The \emph{augmented base locus} of $L$ is 
$$\B+ (D) : = \BB (D - \epsilon H),$$
where $H$ is any ample divisor on $X$, $0 < \epsilon \ll 1$ is rational, and $\BB (D- \epsilon H)$ denotes the stable base locus of the $\QQ$-divisor $D - \epsilon H$, i.e. the base locus of $|m (D - \epsilon H)|$ for $m \gg 0$. 
If $L$ is a line bundle, we define $\B+ (L)$ similarly. It is not hard to check (see \cite{ELMNP}*{Proposition 1.5}) that
equivalently one has 
\begin{equation}\label{kodaira_decomp}
\B+ (D) = \bigcap_{D = A+ E} {\rm Supp}~ E,
\end{equation}
where the intersection is taken over all $\QQ$-divisor  decompositions of $D$ such that $A$ is ample and $E$ is effective.

We have that $\B+ (L) \neq X$ if and only if $L$ is big. When $L$ is a big and nef, according to Nakamaye's theorem \cite{Nakamaye}, one has the following description 
$$\B+ (L) = {\rm Null} (V),$$ where ${\rm Null}(V)$ is the union of all subvarieties $V\subset X$ such that $L^{\dim V} \cdot V = 0$, or equivalently $L_{|V}$ is not big.
\end{definition}

We will use the following birational interpretations of the augmented base locus; slightly more general statements can be found for instance in \cite{BBP}*{Lemma  2.2 and Proposition 2.3}.\footnote{I thank Angelo Lopez for pointing 
out this reference.}

\begin{lemma}\label{kodaira_birational}
If $D$ is a  $\QQ$-divisor on $X$, then
$$\B+ (D)=\bigcap_{f,A,E}f \left({\rm Supp} ~E \right),$$
where the intersection is taken over all projective birational morphisms
$f: Y \rightarrow X$ with $Y$ normal, and all decompositions
$f^*D \sim_{\QQ} A+E$, with $A$ ample and $E$ effective.
\end{lemma}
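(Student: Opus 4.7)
The plan is to prove the two inclusions separately. The inclusion $\bigcap_{f,A,E} f(\mathrm{Supp}\, E) \subseteq \B+(D)$ is immediate from the intrinsic Kodaira-type description $(\ref{kodaira_decomp})$: if $x \notin \B+(D)$, then already on $X$ itself there is a decomposition $D \sim_\QQ A_0 + E_0$ with $A_0$ ample, $E_0$ effective, and $x \notin \mathrm{Supp}\, E_0$; taking $(f,A,E) = (\mathrm{id}_X, A_0, E_0)$ exhibits a member of the intersection missing $x$. The content of the lemma is therefore the reverse inclusion.

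For $\B+(D) \subseteq \bigcap_{f,A,E} f(\mathrm{Supp}\, E)$ I argue contrapositively. Fix a projective birational $f: Y \to X$ with $Y$ normal and a decomposition $f^*D \sim_\QQ A + E$ with $A$ ample on $Y$ and $E$ effective, and suppose $x \notin f(\mathrm{Supp}\, E)$; the goal is to show $x \notin \B+(D)$. The idea is to absorb a small ample piece from $X$ into $A$ and then descend to $X$ by pushforward. Fix an ample $H$ on $X$; since the ample cone is open, for sufficiently small rational $\epsilon > 0$ the class $A_\epsilon := A - \epsilon f^*H$ is still ample on $Y$, and
$$f^*(D - \epsilon H) \sim_\QQ A_\epsilon + E.$$
Choose $m$ divisible enough that $mA_\epsilon$ is very ample, and pick an effective member $A' \in \lvert m A_\epsilon \rvert$ disjoint from the fiber $f^{-1}(x)$: this is possible because $f^{-1}(x) \subsetneq Y$ is proper closed, so a general hyperplane in the projective embedding defined by $\lvert m A_\epsilon \rvert$ misses its image. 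Then $B := A' + mE$ is an effective divisor on $Y$ satisfying $B \sim m f^*(D - \epsilon H)$ and $B \cap f^{-1}(x) = \emptyset$, the latter using both $A' \cap f^{-1}(x) = \emptyset$ and $\mathrm{Supp}(E) \cap f^{-1}(x) = \emptyset$.

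The last step is the pushforward: $f_* B$ is an effective Weil divisor on $X$ with $x \notin \mathrm{Supp}\, f_* B$, and by the projection formula for the birational morphism $f$ we have $f_* B \sim m(D - \epsilon H)$. Since $X$ is smooth in the paper's setting, $f_* B$ is automatically Cartier, so $x \notin \BB(D - \epsilon H)$. As adding an ample class shrinks the stable base locus, $\BB(D - \epsilon H)$ is monotone non-decreasing in $\epsilon$; hence for any $0 < \epsilon' < \epsilon$ small enough that $\B+(D) = \BB(D - \epsilon' H)$, one has $\B+(D) \subseteq \BB(D - \epsilon H)$, forcing $x \notin \B+(D)$. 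The main point requiring care is the pushforward step — ensuring that $f_* B$ is $\QQ$-Cartier and that the linear equivalence descends — which is transparent on a smooth $X$ but would demand $\QQ$-factoriality or a separate argument on a singular normal $X$; this is presumably the reason the proof in \cite{BBP}*{Lemma~2.2, Proposition~2.3} is phrased with somewhat more generality.
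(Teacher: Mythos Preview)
The paper does not give its own proof of this lemma; it simply cites \cite{BBP}*{Lemma 2.2 and Proposition 2.3}. Your argument follows the right strategy and is essentially correct, but one step is incompletely justified.

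The gap is your choice of $A' \in |mA_\epsilon|$ disjoint from $f^{-1}(x)$. Your justification, that ``$f^{-1}(x) \subsetneq Y$ is proper closed, so a general hyperplane \ldots\ misses its image,'' is insufficient: a general member of a very ample linear system avoids any given finite set of points, but it \emph{meets} every positive-dimensional subvariety. Since $f$ is only birational, the fiber $f^{-1}(x)$ could a priori be positive-dimensional, and then no member of $|mA_\epsilon|$ would avoid it. What is missing is the observation that, under your hypothesis $x \notin f({\rm Supp}\,E)$, the fiber is in fact a single point. Indeed, any curve $C \subset Y$ contracted by $f$ satisfies $0 = f^*D \cdot C = A \cdot C + E \cdot C$; since $A$ is ample one has $A \cdot C > 0$, hence $E \cdot C < 0$ and $C \subseteq {\rm Supp}\,E$. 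Thus the entire exceptional locus of $f$ lies in ${\rm Supp}\,E$, and as $x \notin f({\rm Supp}\,E)$ the fiber $f^{-1}(x)$ contains no contracted curve; by Zariski's main theorem (using that $Y$ is normal and $f$ is proper birational) it is a single point. With this addition your argument goes through unchanged.
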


\begin{lemma}\label{birational_augmented}
Let $f: Y\rightarrow X$ be a birational morphism of smooth projective varieties, and ${\rm Exc}(f)\subseteq Y$ 
its exceptional locus. If $D$ is a $\QQ$-divisor on $X$, then
$$\B+ (f^*(D))=f^{-1}(\B+(D))\cup {\rm Exc}(f).$$
\end{lemma}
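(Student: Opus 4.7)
The plan is to establish the two inclusions separately, exploiting the ample-plus-effective characterization (\ref{kodaira_decomp}) together with the standard negativity of the exceptional locus of a birational morphism of smooth projective varieties.

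For the inclusion $\B+(f^*D) \subseteq f^{-1}(\B+(D)) \cup {\rm Exc}(f)$, the key input is the negativity lemma: since $f$ is projective birational with $Y,X$ smooth, there exists an effective $f$-exceptional divisor $F$ with ${\rm Supp}(F)={\rm Exc}(f)$ such that $-F$ is $f$-ample; consequently $f^*A-\epsilon F$ is ample on $Y$ for any ample divisor $A$ on $X$ and $0<\epsilon\ll 1$. Given any decomposition $D\sim_{\QQ}A+E$ with $A$ ample and $E$ effective, I would rewrite
\[
f^*D \sim_{\QQ} (f^*A-\epsilon F)+(f^*E+\epsilon F),
\]
which is again an ample-plus-effective decomposition of $f^*D$ whose effective part is supported on $f^{-1}({\rm Supp}(E))\cup{\rm Exc}(f)$. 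Applying (\ref{kodaira_decomp}) to $f^*D$ and intersecting over all decompositions of $D$ yields the containment, using that the fixed set ${\rm Exc}(f)$ commutes with the intersection.

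For the reverse inclusion I would first show ${\rm Exc}(f)\subseteq\B+(f^*D)$: every point $y\in{\rm Exc}(f)$ lies on a curve $C$ contracted by $f$, so for any ample $H_Y$ on $Y$, $(f^*D-\epsilon H_Y)\cdot C=-\epsilon H_Y\cdot C<0$, forcing $C\subseteq\BB(f^*D-\epsilon H_Y)=\B+(f^*D)$ for $0<\epsilon\ll 1$. To see $f^{-1}(\B+(D))\subseteq\B+(f^*D)$, take $y\notin{\rm Exc}(f)$ and set $x:=f(y)\in\B+(D)$. By Zariski's Main Theorem the fiber $f^{-1}(x)$ is connected; since $Y\setminus{\rm Exc}(f)$ is open and meets $f^{-1}(x)$ at $y$, the set $f^{-1}(x)\setminus{\rm Exc}(f)$ is a non-empty clopen subset of $f^{-1}(x)$, so equals the whole fiber, and injectivity of $f$ away from ${\rm Exc}(f)$ forces $f^{-1}(x)=\{y\}$. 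Now for any decomposition $f^*D\sim_{\QQ}A'+E'$ with $A'$ ample and $E'$ effective, Lemma~\ref{kodaira_birational} applied with $g=f$ gives $x\in f({\rm Supp}(E'))$, so some $y'\in{\rm Supp}(E')$ satisfies $f(y')=x$; then $y'=y$, and $y\in{\rm Supp}(E')$. Since this holds for every such decomposition, (\ref{kodaira_decomp}) yields $y\in\B+(f^*D)$.

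The principal obstacle is arranging the negativity-lemma input in the right form: smoothness (or $\QQ$-factoriality) is used essentially, both to ensure ${\rm Exc}(f)$ is of pure codimension one and to produce an effective exceptional combination $F$ with $-F$ relatively ample. Once this is in hand the argument is formal. A small separate point is the degenerate case when $D$ is not big, where both sides of the asserted equality equal $Y$ by the convention that intersections over empty families give the ambient variety.
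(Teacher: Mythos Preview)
Your argument is correct. Note, however, that the paper does not actually give its own proof of this lemma: it simply records the statement and refers to \cite{BBP}*{Proposition~2.3} for a slightly more general version. So there is no in-text proof to compare against; what you have written is a complete self-contained argument along the lines of the standard one.

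A couple of small remarks. For the first inclusion, your appeal to the negativity lemma is exactly right, but it is worth making explicit why one can choose the effective $f$-exceptional divisor $F$ with \emph{full} support equal to ${\rm Exc}(f)$: taking an ample divisor $H_Y$ on $Y$ and setting $F=f^*f_*H_Y-H_Y$, one has $-F\equiv_f H_Y$ $f$-ample and $F\ge 0$ by negativity; a short intersection argument with contracted curves then shows every exceptional component appears with positive coefficient. This is where smoothness (or $\QQ$-factoriality) of $X$ enters, as you note. For the reverse inclusion, your fiber argument via Zariski's Main Theorem is clean, and invoking Lemma~\ref{kodaira_birational} to handle the points outside ${\rm Exc}(f)$ is efficient. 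The degenerate (non-big) case is handled correctly by your observation that $h^0(Y,\lfloor mf^*D\rfloor)=h^0(X,\lfloor mD\rfloor)$ via $f_*\shO_Y\simeq\shO_X$, so both sides equal the ambient variety.
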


\noindent
{\bf Proof of Theorem \ref{kawamata_viehweg_MHM}.}
First, just as in the proof of Saito's vanishing theorem, due to the exactness of the functor $S(\cdot)$ we can reduce to assuming that $M$ is a pure Hodge module.
I will divide the proof into a few steps which loosely follow the standard steps in the proof of the Kawamata-Viehweg theorem.  In the first three steps we will assume that $L$ is a big and nef line bundle, and $\Delta =  0$. The last two will deal with the general case.

\noindent
\emph{The line bundle case.}
Note to begin with that since 
$L$ is big, in general there exist an $m>0$, an ample line bundle $A$, and an effective divisor $E$, such that 
\begin{equation}\label{kodaira}
L^{\otimes m} \simeq A\otimes \shO_X(E).
\end{equation}

\noindent
\emph{Step 1.}
This is a Norimatsu-type statement (see \cite{Lazarsfeld}*{Lemma 4.3.5}): we show that 
if $A$ is an ample line bundle, and 
$E\subset X$ is a reduced simple normal crossings divisor on $X$ contained in the smooth locus of $M$, then 
$$H^i \big(X, S(\Mmod) \otimes A \otimes \shO_X(E)\big) = 0 \,\,\,\,{\rm for~all~} i >0.$$

Let's assume first that $E$ is a smooth divisor. As $S(\Mmod)$ is locally free in a neighborhood of $E$, we have a short exact sequence
$$0 \longrightarrow S(\Mmod) \otimes A \longrightarrow S(\Mmod) \otimes A \otimes \shO_X(E) 
\longrightarrow S(\Mmod)_{|E} \otimes A_{|E} \otimes \shO_E (E) \longrightarrow 0
$$
Passing to cohomology and applying Corollary \ref{lowest}, we see that is is enough to show that 
$$H^i \big(E, S(\Mmod)_{|E} \otimes \shO_E (E)  \otimes A_{|E}  \big) = 0 \,\,\,\,{\rm for~all~} i >0.$$
Again by Corollary \ref{lowest}, it suffices then to note that $S(\Mmod)_{|E} \otimes \shO_E (E) \simeq 
S(\Mmod^\prime)$, for some filtered $\Dmod$-module underlying a mixed Hodge module $M^\prime$ on $E$. We can in fact take
$$(\Mmod^\prime, F) : = (\mathcal{H}^1 i^{!}  \Mmod, F).$$ 
On one hand, this filtered  $\Dmod$-module underlies a Hodge module, as 
$$\mathcal{H}^1 i^{!}  \Mmod, F) \simeq i^{!} (\Mmod, F) [1]$$ 
by \cite{Saito-MHP}*{Lemme 3.5.6}. On the other hand, since $E$ is contained in the smooth locus of $M$, using 
\cite{Schnell1}*{Lemma 2.17} (as in the proof of Theorem \ref{viehweg}) we see that there is an isomorphism 
$S(\Nmod^\prime) \simeq S(\Nmod)_{|E}$, 
where $\Nmod$ is again notation for the associated left $\Dmod$-modules. This is equivalent to what we want by adjunction.

In general we have $E= E_1 + \cdots + E_k$, where $E_j$ are smooth divisors with transverse intersections. 
The statement can be easily proved by induction on $k$, using exact sequences similar to the one above, and the fact that $M$ continues to be a variation of mixed Hodge structure when restricted to the log-canonical centers of 
$E$.

\noindent
\emph{Step 2.}
In this step we show that we can reduce the general statement to the case where in $(\ref{kodaira})$ we have that $E$ has simple normal crossings support, and this 
support is contained in the smooth locus of $M$.
Consider the notation of Definition \ref{augmented}, so that 
$$\B+ (L) = \BB (L - \epsilon H) = {\rm Bs} \left( L^{\otimes k} \otimes \shO_X (- k \epsilon H) \right),$$
for $k$ sufficiently large and divisible, and ${\rm Bs}(\cdot)$ stands for the usual base locus. We consider 
$\mu:Y \rightarrow X$ a log-resolution of the linear series $|L^{\otimes k} \otimes \shO_X (- k \epsilon H)|$, so that 
$$\mu^* \left( L^{\otimes k} \otimes \shO_X (- k \epsilon H) \right) \simeq M_k \otimes \shO_Y (F_k),$$
where $M_k$ is the moving part of the pullback, a big and basepoint-free line bundle, and $F_k$ is its fixed divisor. 
From Lemma \ref{birational_augmented} we have that 
$$\B+ (\mu^* L) = \mu^{-1} (\B+ (L)) \cup {\rm Exc(\mu)} = {\rm Supp} ( F_k) \cup~ {\rm Exc(\mu)},$$ 
which is a divisor with simple normal crossings support on Y.

By assumption $\B+ (L)$ is contained in the smooth locus of $M$. Choosing the log-resolution to be an isomorphism outside of
 $\B+(L)$, by Example \ref{nonchar_example} we have that $\mu$ is non-characteristic for $(\Mmod, F)$. Recall that this implies that the filtered inverse image 
$\mu^*(\Mmod, F) = (\tilde \Mmod, F)$ is given by the formula 
$$\tilde \Mmod = \mu^{-1} \Mmod \otimes_{\mu^{-1} \shO_X} \omega_{Y/X} \,\,\,\,{\rm and} \,\,\,\, 
F_p \tilde \Mmod = \mu^* F_p \Mmod \otimes \omega_{Y/X} ,$$
and this underlies the Hodge module $\mu^*M$. 
We see then that $S(\mu^* \Mmod) \simeq \mu^* S(\Mmod) \otimes 
\omega_{Y/X}$, and so 
$$\mu_* S(\mu^* \Mmod) \simeq S(\Mmod),$$
as $\mu_* \omega_{Y/X} \simeq \shO_X$. Assuming that we proved that 
\begin{equation}\label{intermediate}
H^i (Y, S(\mu^*\Mmod) \otimes \mu^*L) = 0 \,\,\,\, {\rm for~all~} i >0,
\end{equation}
this implies the vanishing we want on $X$ as $R^i \mu_* S(\mu^*M) = 0$, which is a consequence of 
Theorem \ref{saito_GR}.

Let's now write 
$$F_k = \sum_j a_j E_j,$$
with the convention that $a_j \ge 0$, so that we may assume that the sum contains all the exceptional divisors of $\mu$ among the $E_j$. 
By construction we have that $\B+ (\mu^* L) $ is contained in the smooth 
locus of $\mu^* M$; equivalently, this statement holds for all $E_j$ in the sum above.

Finally, note that by construction we have
$$\mu^* L^{\otimes k} \simeq  \mu^* \shO_X (k \epsilon H) \otimes  M_k  \otimes \shO_Y (F_k),$$
and the line bundle $\mu^* \shO_X (k \epsilon H) \otimes  M_k $ is still big and nef.
To conclude, one appeals to a version of the Negativity Lemma, stating that for such a $k \gg 0$, there exist $b_j \ge 0$ such that 
$$\mu^* \shO_X (k\epsilon H) \otimes \shO_Y ( - \sum_j b_j F_j)$$ 
is ample, where the sum runs over the exceptional divisors of $\mu$ 
(and so with the same convention as above we can assume that it runs over all $E_j$);  
see e.g. \cite{Lazarsfeld}*{Corollary 4.1.4}. But now we can write 
$$\mu^* L^{\otimes k} \simeq \big( \mu^* \shO_X (k\epsilon H) \otimes \shO_Y ( - \sum_j b_j F_j) \big) \otimes 
\shO_Y \big(\sum_j (a_j + b_j)F_j \big),$$
which is of the form required at the beginning of this reduction step.

\noindent
\emph{Step 3.} In this last step we conclude the proof assuming that $E$ in $(\ref{kodaira})$ has simple normal crossings support contained in the smooth locus of $M$, which is the outcome of Step 2.
By standard arguments using Kawamata covers, it is known that there exists a finite cover $f: Y \rightarrow X$
with $Y$ smooth projective, such that 
$$f^* L \simeq A^\prime \otimes  \shO_Y (E^\prime),$$
with $A^\prime$ ample and $E^\prime$ a reduced simple normal crossings divisor; see e.g.\cite{Lazarsfeld}*{p.255}. 
Moreover, $f$ can be chosen to be non-characteristic with respect to $(\Mmod, F)$. 

This last statement requires some discussion; recall that Kawamata covers can be constructed in two steps (see \cite{Lazarsfeld}*{Proposition 4.1.12}). The first is a Bloch-Gieseker type cover $g: Z \rightarrow X$, where for some component $E_1$ of $E$ one can write $g^*E = kE_1$, for a given $k$ and some $E_1$ not necessarily effective. In this step one can assume that 
$E$ is very ample by writing it as the difference of two very ample line bundles, and then $g$ can be constructed so as to be ramified along a generic union of hyperplane sections of $X$ in the embedding given by $E$; see the proof of \cite{Lazarsfeld}*{Theorem 4.1.10}. From this genericity it follows that $g$ is non-characteristic with respect to 
$(\Mmod, F)$. On the other hand, the second step is to consider a cyclic cover $h: Y \rightarrow Z$, which is ramified 
along $f^*E_1$; since this is contained in the smooth locus of $f^*M$, this cover is also non-characteristic.
One then applies this procedure inductively for all components of $E$.

Going back to the proof, we can now consider the filtered inverse image $f^*(\Mmod, F)$ underlying the pullback Hodge module just as in Step 2. Note that we have $E^\prime = f^{-1} ({\rm Supp}~E)$, and so $E^\prime$ is contained 
in the smooth locus of $f^*M$. By Step 1, we then have 
$$H^i (Y, S(f^*\Mmod) \otimes f^*L) = 0 \,\,\,\, {\rm for~all~} i >0.$$
But precisely as in Step 2 we have that 
$$f_* S(f^* \Mmod) \simeq S(\Mmod) \otimes f_* \omega_{Y/X}.$$
As $\shO_X$ is a direct summand of $f_* \omega_{Y/X}$ via the trace map, we obtained the desired vanishing using the projection formula.

\noindent
\emph{The $\QQ$-divisor case.}
We do this in two further steps which reduce us to the line bundle case discussed above. We  first reduce to the case when ${\rm Supp}~\Delta$ is a simple normal crossings divisor.

\noindent
\emph{Step 4.}
Let $\mu: Y \rightarrow X$ be a log-resolution of $(X, \Delta)$, and write
$$K_Y - \mu^* (K_X + \Delta) = P - N,$$
where $P$ and $N$ are effective $\QQ$-divisors with simple normal crossings support, without common components, 
and such that $P$ is exceptional and all the coefficients in $N$ are strictly less than $1$. We then have 
$$K_Y + N + \lceil P\rceil - P + \mu^* A = \mu^* (K_X + \Delta + A) +  \lceil P\rceil,$$
and so there exists a line bundle $L^\prime$ on $Y$ such that $L^\prime \sim_\QQ \mu^*A + \Delta^\prime$, where 
$\Delta^\prime = N + \lceil P\rceil - P$, a strictly boundary divisor with normal crossings support. Note that $\mu^*A$
is still big and nef, and in fact by Lemma \ref{birational_augmented} we have
$$\B+ (\mu^* A) = \mu^{-1} (\B+ (A)) \cup {\rm Exc(\mu)}.$$
We can choose $\mu$ such that it is an isomorphism outside the support of $\Delta$. It follows that 
both $\B+ (\mu^* A)$ and ${\rm Supp}~\Delta^\prime$ are contained in the smooth locus of $\mu^*M$. Note finally 
that it is enough to show that
$$H^i (Y, S(\mu^*\Mmod) \otimes L^\prime) = 0 \,\,\,\,{\rm for~all~} i>0.$$
Indeed, we have observed before that 
$$\mu_* S(\mu^* \Mmod) \simeq S(\Mmod) \,\,\,\,{\rm and}\,\,\,\, R^i \mu_* S(\mu^* \Mmod) = 0 \,\,\,\,{\rm for~} 
i >0.$$

\noindent
\emph{Step 5.}
It is enough to assume then that $\Delta$ is a divisor with simple normal crossings, support, say 
$\Delta = \sum_{i=1}^{k} a_i D_i$, with $0 < a_i < 1$ and $D_i$ smooth. 

The strategy is to prove the statement 
by induction on $k$. The case $k = 0$ is the line bundle case proved above.
Assume now that $k >0$, and let's write $a_1 = \frac{p}{q}$. Note that $0 < p \le q-1$.
Just as in Step 3, one considers a Kawamata cover associated to the divisor $D_1$; concretely, 
there exists a finite morphism $f: Y \rightarrow X$, with $Y$ smooth projective, such that on $Y$ the divisor $D_1$ 
becomes divisible by $d$. In other words, we have
$$L^\prime : = f^*L \sim_{\QQ} A^\prime + cD_1^\prime + \sum_{i=2}^{k} a_i D_i^\prime,$$
where $A^\prime = f^* A$ and $D_i^\prime = f^* D_i$, still satisfying the fact that $\sum D_i^\prime$ has simple 
normal crossings. Moreover, this morphism can be chosen to be non-characteristic for $(\Mmod, F)$, so we can deal with $f^*M$ as in the previous proof. 

By induction we can now assume that the line bundle $L^\prime \otimes \shO_Y (- c D_1^\prime)$ satisfies
$$H^i \left(Y, S(f^* \Mmod)\otimes L^\prime \otimes \shO_Y (- c D_1^\prime)\right) = 0 \,\,\,\, {\rm for~all~} i >0.$$
Recall that due to the definition of the filtration under non-characteristic inverse image we have 
$S(f^* \Mmod) \simeq f^*S(\Mmod) \otimes \omega_{Y/X}$. On the other hand, it is standard that in the covering construction 
above we have that $f_* \left( L^\prime \otimes \shO_Y (- c D_1^\prime)  \otimes \omega_{Y/X} \right)$ contains $L$ as a direct 
summand. The desired vanishing follows from the projection formula.

\section*{References}

\begin{biblist}
\bib{BBP}{article}{ 
    author={Boucksom, S\'ebastien},
    author={Broustet, Ama\"el},
    author={Pacienza, Gianluca},
    title={Uniruledness of stable base loci of adjoint linear systems via Mori theory},  
    journal={Math. Z.},
    volume={275},
    date={2013}, 
    pages={499--507},
}
\bib{Budur_Saito}{article}{
   author={Budur, Nero},
   author={Saito, Morihiko},
   title={Multiplier ideals, $V$-filtration, and spectrum},
   journal={J. Algebraic Geom.},
   volume={14},
   date={2005},
   number={2},
   pages={269--282},
}
\bib{ELMNP}{article}{ 
    author={Ein, Lawrence},
    author={Lazarsfeld, Robert},
    author={Musta\c{t}\v{a}, Mircea},
    author={Nakamaye, Michael},
    author={Popa, Mihnea},
    title={Asymptotic invariants of base loci},  
    journal={Ann. Inst. Fourier},
    volume={56},
    date={2006}, 
    pages={1701--1734},
}
\bib{EV}{book}{
   author={Esnault, H{\'e}l{\`e}ne},
   author={Viehweg, Eckart},
   title={Lectures on vanishing theorems},
   series={DMV Seminar},
   volume={20},
   publisher={Birkh\"auser Verlag},
   place={Basel},
   date={1992},
   pages={vi+164},
}
\bib{GL1}{article}{
   author={Green, Mark},
   author={Lazarsfeld, Robert},
   title={Deformation theory, generic vanishing theorems, and some
   conjectures of Enriques, Catanese and Beauville},
   journal={Invent. Math.},
   volume={90},
   date={1987},
   number={2},
   pages={389--407},
}
\bib{GL2}{article}{
   author={Green, Mark},
   author={Lazarsfeld, Ro{-}bert},
   title={Higher obstructions to deforming cohomology groups of line bundles},
   journal={J. Amer. Math. Soc.},
   volume={1},
   date={1991},
   number={4},
   pages={87--103},
}
\bib{Hacon}{article}{
	author={Hacon, Christopher},
	title={A derived category approach to generic vanishing},
	journal={J. Reine Angew. Math.},
	volume={575},
	date={2004},	
	pages={173--187},
}
\bib{HTT}{book}{
   author={Hotta, R.},
   author={Takeuchi, K.},
   author={Tanisaki, T.},
   title={D-modules, perverse sheaves, and representation theory},
   publisher={Birkh\"auser, Boston},
   date={2008},
}
\bib{Kollar1}{article}{
   author={Koll\'ar, J\'anos},
   title={Higher direct images of dualizing sheaves I},
   journal={Ann. of Math.},
   volume={123},
   date={1986},
   number={1},
   pages={11--42},
}
\bib{Kollar2}{article}{
   author={Koll\'ar, J\'a{-}nos},
   title={Higher direct images of dualizing sheaves II},
   journal={Ann. of Math.},
   number={124},
   date={1986},
   pages={171--202},
}
\bib{Lazarsfeld}{book}{
         author={Lazarsfeld, Robert},
         title={Positivity in algebraic geometry I $\&$ II},
         series={Ergebnisse der Mathematik und ihrer Grenzgebiete}, 
         number={48 $\&$ 49}, 
         publisher={Springer-Verlag, Berlin}, 
         date={2004},
}
\bib{Maisonobe_Sabbah}{article}{
    author={Maisonobe, Phillipe},
    author={Sabbah, Claude},
    title={Aspects of the theory of $\Dmod$-modules (Kaiserslautern 2002)},
	journal={at http://www.math.polytechnique.fr/cmat/sabbah/livres.html},
}
\bib{MP}{article}{
      author={Musta\c t\u a, Mircea},
      author={Popa, Mihnea},
	title={Hodge ideals},
	journal={in preparation}, 
	date={2016}, 
}
\bib{Nakamaye}{article}{
	author={Nakamaye, Michael},
     title={Stable base loci of linear series},
	journal={Math. Ann.},
	volume={318},
	date={2000},	
	pages={837--847},
}
\bib{PP}{article}{
	author={Pareschi, Giuseppe},
	author={Popa, Mihnea},
	title={GV-sheaves, Fourier-Mukai transform, and generic vanishing},
	journal={Amer. J. Math.},
	volume={133},
	date={2011},	
	pages={235--271},
}
\bib{PS}{article}{
	author={Popa, Mihnea},
	author={Schnell, Christian},
	title={Generic vanishing theory via mixed Hodge modules},
	journal={Forum of Math., Sigma},
	volume={1},
	date={2013},	
	pages={1--60},
}
\bib{PS2}{article}{
	author={Popa, Mihn{-}ea},
	author={Schnell, Christian},
	title={Kodaira dimension and zeros of holomorphic one-forms},
	journal={Ann. of Math.},
	volume={179},
	date={2014},
	pages={1--12},
}
\bib{PS3}{article}{
	author={Popa, Mihnea},
	author={Schnell, Christian},
	title={On direct images of pluricanonical bundles},
	journal={Algebra and Number Theory},
	volume={8},
      date={2014},
      pages={2273--2295},
}
\bib{PW}{article}{
      author={Popa, Mihnea},
      author={Wu, Lei},
	title={Weak positivity for Hodge modules},
	journal={to appear in Math. Res. Lett., preprint arXiv:1511.00290}, 
	date={2015}, 
}
\bib{Saito-MHP}{article}{
   author={Saito, Morihiko},
   title={Modules de Hodge polarisables},
   journal={Publ. RIMS}, 
   volume={24},
   date={1988}, 
   pages={849--995},
}
\bib{Saito-MHM}{article}{
   author={Saito, Mori{-}hiko},
   title={Mixed Hodge modules},
   journal={Publ. Res. Inst. Math. Sci.},
   volume={26},
   date={1990},
   number={2},
   pages={221--333},
}
\bib{Saito-Kollar}{article}{
   author={Saito, Morihiko},
   title={On Koll\'ar's conjecture},
   journal={Several complex variables and complex geometry, Part 2 (Santa Cruz, CA, 1989), Proc. Sympos. Pure Math.},
   volume={52},
   publisher={Amer. Math. Soc.},
   place={Providence, RI},
   date={1991},
   pages={509--517},
}
\bib{Saito-b}{article}{
   author={Saito, Mori{-}hiko},
   title={On $b$-function, spectrum and rational singularity},
   journal={Math. Ann.},
   volume={295},
   date={1993},
   number={1},
   pages={51--74},
}
\bib{Saito-HF}{article}{
   author={Saito, Morihiko},
   title={On the Hodge filtration of Hodge modules},
   language={English, with English and Russian summaries},
   journal={Mosc. Math. J.},
   volume={9},
   date={2009},
   number={1},
   pages={161--191, back matter},
}
\bib{Schnell1}{article}{
   author={Schnell, Christian},
   title={Complex analytic N\'eron models for arbitrary families of intermediate Jacobians},
   journal={Invent. Math.},
   volume={188},
   date={2012},
   number={1},
   pages={1--81},
}
\bib{Schnell2}{article}{
   author={Schnell, Ch{-}ristian},
   title={ Weak positivity via mixed Hodge modules},
   journal={dedicated to Herb Clemens, to appear, preprint arXiv:1401.5654},
   date={2013},
}
\bib{Schnell3}{article}{
   author={Schnell, Christian},
   title={On Saito's vanishing theorem},
   journal={to appear in Math. Res. Lett., preprint arXiv:1407.3295},
   date={2014},
}
\bib{Schnell-MHM}{article}{
	author={Schnell, Ch{-}ristian},
	title={An overview of Morihiko Saito's theory of mixed Hodge modules},
	journal={preprint arXiv:1405.3096},
	year={2014},
}	
\bib{Suh}{article}{
      author={Suh, Junecue},
	title={Vanishing theorems for mixed Hodge modules and applications},
	journal={preprint, to appear in J. Eur. Math. Soc.}, 
	date={2015}, 
}
\bib{Viehweg1}{article}{
        author={Viehweg, Eckart},
	title={Weak positivity and the additivity of the Kodaira dimension of certain fiber spaces},
	journal={Adv. Studies PureMath.}, 
	number={1},
	date={1983}, 
	pages={329--353},
}
\bib{Wu}{article}{
      author={Wu, Lei},
	title={Vanishing and injectivity theorems for Hodge modules},
	journal={to appear in Trans. Amer. Math. Soc., preprint arXiv:1505.00881}, 
	date={2015}, 
}
\end{biblist}

\end{document}